\documentclass[12pt,oneside,english]{amsart}
\usepackage[T1]{fontenc}
\usepackage[latin9]{inputenc}
\setcounter{secnumdepth}{2}
\setcounter{tocdepth}{2}
\usepackage{float}
\usepackage{amsthm}
\usepackage{amssymb}
\usepackage{graphicx}

\makeatletter

\providecommand{\tabularnewline}{\\}

\numberwithin{equation}{section}
\theoremstyle{plain}
\newtheorem{thm}{\protect\theoremname}[section]
  \theoremstyle{plain}
  \newtheorem{question}[thm]{\protect\questionname}
  \theoremstyle{definition}
  \newtheorem{defn}[thm]{\protect\definitionname}
  \theoremstyle{plain}
  \newtheorem{prop}[thm]{\protect\propositionname}
  \theoremstyle{plain}
  \newtheorem{lem}[thm]{\protect\lemmaname}
  \theoremstyle{plain}
  \newtheorem{cor}[thm]{\protect\corollaryname}
  \theoremstyle{remark}
  \newtheorem{rem}[thm]{\protect\remarkname}

\date{}
\usepackage{multicol}
\usepackage{xypic}
\usepackage{etex}
\usepackage{amsthm}
\usepackage{fullpage,amsmath,amssymb,pictexwd}
\usepackage{graphics}
\usepackage{epsfig}
\usepackage{graphicx}
\usepackage{enumerate}
\usepackage{verbatim}
\usepackage{stmaryrd}

\makeatother

\usepackage{babel}
  \providecommand{\corollaryname}{Corollary}
  \providecommand{\definitionname}{Definition}
  \providecommand{\lemmaname}{Lemma}
  \providecommand{\propositionname}{Proposition}
  \providecommand{\questionname}{Question}
  \providecommand{\remarkname}{Remark}
\providecommand{\theoremname}{Theorem}

\begin{document}

\title{Smooth Projective Toric Variety Representatives in Complex Cobordism}

\author{Andrew Wilfong}
\date{\today}

\address{Department of Mathematics, Eastern Michigan University, Ypsilanti,
MI 48197}

\email{awilfon2@emich.edu}
\begin{abstract}
A general problem in complex cobordism theory is to find useful representatives
for cobordism classes. One particularly convenient class of complex
manifolds consists of smooth projective toric varieties. The bijective
correspondence between these varieties and smooth polytopes allows
us to examine which complex cobordism classes contain a smooth projective
toric variety by studying the combinatorics of polytopes. These combinatorial
properties determine obstructions to a complex cobordism class containing
a smooth projective toric variety. However, the obstructions are only
necessary conditions, and the actual distribution of smooth projective
toric varieties in complex cobordism appears to be quite complicated.
The techniques used here provide descriptions of smooth projective
toric varieties in low-dimensional cobordism.
\end{abstract}
\maketitle

\section{Introduction}

In 1958, Hirzebruch posed the following question regarding complex
cobordism \cite{Hirzebruch1958}.
\begin{question}
\label{Hirzebruch}Which complex cobordism classes can be represented
by connected smooth algebraic varieties?
\end{question}
Practically no progress has been made on this difficult problem. The
best that has been achieved is an understanding of complex cobordism
representatives which display some of these properties. For example,
Milnor demonstrated that each complex cobordism class can be represented
by a smooth \emph{not necessarily connected }algebraic variety \cite[Chapter VII]{Stong1968}.
In 1998, Buchstaber and Ray addressed a toric version of Hirzebruch's
question. They demonstrated that every complex cobordism class can
be represented by a topological generalization of a toric variety
called a quasitoric manifold \cite{Buchstaber1998,Buchstaber2001}.
These representatives are smooth and connected, but they are not algebraic
varieties.

Another way of approaching Hirzebruch's question could be to focus
on a more specific collection of connected smooth algebraic varieties
and study their cobordism classes. In particular, smooth projective
toric varieties display a convenient combinatorial structure which
aids in computations, including complex cobordism computations. It
would seem reasonable to expect that the additional structure of smooth
projective toric varieties would make the following question easier
to answer than Hirzebruch's original question.
\begin{question}
\label{main}Which complex cobordism classes can be represented by
smooth projective toric varieties?
\end{question}
Answering this question would at least give some information regarding
Hirzebruch's original question, since these algebraic varieties are
all smooth and connected. Unfortunately, even this simpler question
seems to have a complex and nuanced answer. In what follows, we will
examine Question \ref{main} in low-dimensional cobordism. A complete
description of cobordism classes containing smooth projective toric
varieties will be provided up to dimension six. Describing smooth
projective toric variety representatives in dimensions eight and higher
is already significantly more complicated, so only partial descriptions
will be provided in these dimensions.

The cobordism of smooth projective toric varieties will be studied
by examining certain properties of convex geometric objects like fans
and polytopes that are associated to the toric varieties. Section
2 will provide a brief introduction to the material involving complex
cobordism and toric varieties that is needed to approach Question
\ref{main}.

One well-known obstruction to a cobordism class containing a smooth
toric variety is its Todd genus. More specifically, the Todd genus
of a smooth toric variety must equal one \cite{Ishida1990}. In the
case of smooth projective toric varieties, this is just one of several
obstructions that arise from the combinatorial structure of the corresponding
polytopes. In Section 3, we discuss these more generalized combinatorial
obstructions.

In Section 4, we determine the obstructions to a cobordism class containing
a smooth projective toric variety in $\Omega_{2}^{U}$ and $\Omega_{4}^{U}$.
In these dimensions, the combinatorial restrictions happen to be the
only obstructions.

In Section 5, we examine this representation problem in $\Omega_{6}^{U}$.
In this dimension, there are additional obstructions to a cobordism
class containing a smooth projective toric variety. A classification
theorem for smooth polyhedra and an explicit construction will be
used to describe all complex cobordism classes in this dimension that
contain a smooth projective toric variety.

In Section 6, we examine Question \ref{main} in $\Omega_{8}^{U}$.
The results in this dimension are much more complicated, so only a
partial description can be provided. As in $\Omega_{6}^{U}$, there
are additional obstructions besides those arising from the combinatorial
structure of smooth projective toric varieties. However, if certain
parameters are chosen to provide sufficient freedom (more specifically,
by choosing smooth projective toric varieties whose associated polytopes
have specified $g$-vectors), then these combinatorial obstructions
are the only obstructions in $\Omega_{8}^{U}$.

Section 7 presents some final thoughts on how these partial answers
to Question \ref{main} could be generalized and improved. Approaching
this problem from several different angles could potentially provide
more complete results in the future.

\section{Background}

In order to understand the role that smooth projective toric varieties
play in complex cobordism, we must first establish some basic facts
and techniques regarding each of these.

\subsection{Chern Numbers of Complex Cobordism Classes}

Milnor and Novikov proved that any complex cobordism class in $\Omega_{2n}^{U}$
is completely described by its list of $\left|\pi\left(n\right)\right|$-many
Chern numbers \cite{Milnor1960,Novikov1960}, where $\pi\left(n\right)$
is the set of partitions of the number $n$. Given a stably complex
manifold $M$ of dimension $2n$ and a partition $I=\left\{ i_{1},\ldots,i_{t}\right\} $
of $n$, these Chern numbers will be written as $c_{I}\left[M\right]$
or $c_{i_{1}}\ldots c_{i_{t}}\left[M\right]$.

Not every list of $\left|\pi\left(n\right)\right|$-many integers
corresponds to a valid complex cobordism class. To determine which
lists of integers give cobordism classes, we must examine the relationship
between K-theory and complex cobordism.

The Chern character gives a ring homomorphism
\[
\mbox{ch}:K\left(M\right)\to H^{*}\left(M;\mathbb{Q}\right)
\]
from the K-theory of a manifold to its rational cohomology ring. For
certain choices of virtual bundles in K-theory, this homomorphism
reveals information about the Chern numbers of complex cobordism classes.
This relation is given by the Hattori-Stong Theorem. We will first
fix some notation and terminology in order to more easily make use
of this theorem.
\begin{defn}
\label{Milpoly}(\cite[Section 16]{Milnor1974}) Two monomials in $x_{1},\ldots,x_{n}$
are called \emph{equivalent }if each can be obtained from the other
through a permutation of the $x_{1},\ldots,x_{n}$.

Fix a nonnegative integer $m\le n$, and consider a partition $I=\left\{ i_{1},\ldots,i_{j}\right\} $
of $m$. Then the polynomial $\sum x_{1}^{i_{1}}x_{2}^{i_{2}}\cdots x_{j}^{i_{j}}$,
where the sum is taken over all distinct monomials in $x_{1},\ldots,x_{n}$
that are equivalent to $x_{1}^{i_{1}}x_{2}^{i_{2}}\cdots x_{j}^{i_{j}}$,
is a symmetric polynomial. We can therefore write the sum as $s_{I}\left(\sigma_{1},\ldots,\sigma_{m}\right)=\sum x_{1}^{i_{1}}x_{2}^{i_{2}}\cdots x_{j}^{i_{j}}$,
in terms of the elementary symmetric polynomials $\sigma_{1}=\sigma_{1}\left(x_{1},\ldots,x_{n}\right),\ldots,\sigma_{m}=\sigma_{m}\left(x_{1},\ldots,x_{n}\right)$.
\begin{defn}
(\cite{Atiyah1961}) Let $\xi$ be a vector bundle over an $n$-dimensional
manifold $M$. Set $\lambda_{t}\left(\xi\right)=\sum\limits _{k=0}^{\infty}\Lambda^{k}\left(\xi\right)t^{k}$,
where $\Lambda^{k}\left(\xi\right)$ is the $k^{\mbox{th}}$ exterior
power of $\xi$. The \emph{Atiyah $\gamma$-functions }are defined
by the equation
\[
\frac{\lambda_{t/\left(1-t\right)}\left(\xi\right)}{\lambda_{t/\left(1-t\right)}\left(\mathbb{C}^{\dim\xi}\right)}=\sum_{k=0}^{\infty}\gamma_{k}\left(\xi\right)t^{k},
\]
where $\mathbb{C}^{\dim\xi}$ is the trivial complex bundle of dimension
$\dim\xi$.
\end{defn}
\end{defn}
Let $\omega$ be a partition of a nonnegative integer $m\le n$, where
$2n$ is the dimension of a complex manifold $M$. Formally write
the Chern class of $M$ as $c\left(M\right)=\left(1+x_{1}\right)\cdots\left(1+x_{n}\right)$,
and consider the symmetric function $s_{\omega}\left(\sigma_{1},\ldots,\sigma_{m}\right)$
from Definition \ref{Milpoly}, where $\sigma_{1},\ldots,\sigma_{m}$
are the elementary symmetric polynomials in $x_{1},\ldots,x_{n}$.
If we let $\gamma_{k}=\gamma_{k}\left(\tau M\right)$ denote the Atiyah
$\gamma$-functions applied to the tangent bundle of $M$, then $s_{\omega}\left(\gamma_{1},\ldots,\gamma_{m}\right)$
is a virtual bundle in $K\left(M\right)$. Applying the Chern character
to this bundle yields a cohomology class $\mbox{ch}s_{\omega}\left(\gamma_{1},\ldots,\gamma_{m}\right)\in H^{*}\left(M;\mathbb{Q}\right)$.
\begin{defn}
(cf. \cite[Sections 13, 14]{Conner1966}) The \emph{K-theory Chern
number }$\kappa_{\omega}\left[M\right]$ of $M$ is given by
\[
\kappa_{\omega}\left[M\right]=\left\langle \mbox{ch}s_{\omega}\left(\gamma_{1},\ldots,\gamma_{m}\right)\cdot\mbox{Td}\left(M\right),\mu_{M}\right\rangle ,
\]
where $\mbox{Td}\left(M\right)$ is the Todd class of $M$, and $\mu_{M}$
is the fundamental class of $M$.
\end{defn}
The K-theory Chern number $\kappa_{\omega}\left[M\right]$ is a rational
linear combination of Chern numbers, so it is a complex cobordism
invariant. Hattori and Stong proved that possible Chern numbers in
complex cobordism are completely determined by when these rational
combinations have integer values \cite{Hattori1966,Stong1965}. The
following statement of their theorem is given in \cite[Section 14]{Conner1966}.
\begin{thm}
[Hattori-Stong Theorem]\label{HattoriStong} Consider a complex cobordism
class $\left[M\right]\in\Omega_{2n}^{U}$. For each partition $\omega$
of a nonnegative integer $m\le n$, write $\kappa_{\omega}\left[M\right]=\sum\limits _{I\in\pi\left(n\right)}\beta_{I}\left(\omega\right)c_{I}\left[M\right]$
as a linear combination of Chern numbers, where $\beta_{I}\left(\omega\right)\in\mathbb{Q}$,
and the sum ranges over the set $\pi\left(n\right)$ of partitions
of $n$.

Now consider a family of integers $\left\{ b_{I}\right\} _{I\in\pi\left(n\right)}$.
Then $c_{I}\left[M\right]=b_{I}$ are the Chern numbers of a complex
cobordism class if and only if $\kappa_{\omega}\left[M\right]=\sum\limits _{I\in\pi\left(n\right)}\beta_{I}\left(\omega\right)\cdot b_{I}$
is an integer for each possible $\omega$.
\end{thm}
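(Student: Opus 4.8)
The plan is to establish the two implications separately: the ``only if'' direction is a Riemann--Roch computation, while the ``if'' direction is a comparison of two full-rank lattices in $\mathbb{Z}^{\pi\left(n\right)}$ whose coincidence is the real content of the theorem. For necessity, suppose $c_{I}\left[M\right]=b_{I}$ for a genuine stably complex manifold $M$ representing a class in $\Omega_{2n}^{U}$. As noted above, $s_{\omega}\left(\gamma_{1},\ldots,\gamma_{m}\right)$ is an honest element of $K\left(M\right)$, and the stable complex structure orients $M$ for $K$-theory, yielding a Gysin homomorphism $p_{!}\colon K\left(M\right)\to K\left(\mathrm{pt}\right)=\mathbb{Z}$ for the collapse $p\colon M\to\mathrm{pt}$. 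The Riemann--Roch formula for $p$ identifies $\kappa_{\omega}\left[M\right]=\left\langle \mathrm{ch}\,s_{\omega}\left(\gamma_{1},\ldots,\gamma_{m}\right)\cdot\mathrm{Td}\left(M\right),\mu_{M}\right\rangle =\sum_{I}\beta_{I}\left(\omega\right)b_{I}$ with $p_{!}s_{\omega}\left(\gamma_{1},\ldots,\gamma_{m}\right)$, which lies in $\mathbb{Z}$; hence each $\kappa_{\omega}\left[M\right]$ is an integer, as required.

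For sufficiency I would pass to lattices. By Milnor--Novikov \cite{Milnor1960,Novikov1960} the Chern number map $\left[M\right]\mapsto\left(c_{I}\left[M\right]\right)_{I\in\pi\left(n\right)}$ embeds $\Omega_{2n}^{U}$ onto a subgroup $L_{n}\subseteq\mathbb{Z}^{\pi\left(n\right)}$ of rank $\left|\pi\left(n\right)\right|$, while the integrality conditions define the lattice $HS_{n}=\left\{ \left(b_{I}\right)\in\mathbb{Z}^{\pi\left(n\right)}:\sum_{I}\beta_{I}\left(\omega\right)b_{I}\in\mathbb{Z}\text{ for all }\omega\right\}$; the latter contains $N\cdot\mathbb{Z}^{\pi\left(n\right)}$ for any common denominator $N$ of the finitely many coefficients $\beta_{I}\left(\omega\right)$, so it too has full rank. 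Necessity gives $L_{n}\subseteq HS_{n}$, so $\left[HS_{n}:L_{n}\right]$ is finite and it is enough to prove $L_{n}\otimes\mathbb{Z}_{\left(p\right)}=HS_{n}\otimes\mathbb{Z}_{\left(p\right)}$ for each prime $p$. Here I would invoke the Conner--Floyd isomorphism $K_{*}\left(-\right)\cong\Omega_{*}^{U}\left(-\right)\otimes_{\Omega_{*}^{U}}K_{*}$ together with Hattori's computation of the complex $K$-homology $K_{*}\left(MU\right)$ of the complex cobordism spectrum $MU$: the $K$-homology Hurewicz map $\Omega_{*}^{U}=\pi_{*}\left(MU\right)\to K_{*}\left(MU\right)$ is a monomorphism onto a direct summand of the torsion-free group $K_{*}\left(MU\right)$, and under the standard dictionary between the $\kappa$-invariants and the image of $\left[M\right]$ under this map, that statement says precisely that every integral tuple of $\kappa$-values is realized by a cobordism class. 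Concretely one inputs the known $p$-local polynomial generators of $\Omega_{*}^{U}$ (suitable products of projective spaces and Milnor hypersurfaces), computes their $\kappa$-coordinates, and checks that these span $HS_{n}\otimes\mathbb{Z}_{\left(p\right)}$, which yields the reverse inclusion $HS_{n}\subseteq L_{n}$.

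The main obstacle is this last step. The lattice bookkeeping is routine, but the assertion that the $K$-theoretic congruences are \emph{complete}---that they are already sufficient, so that no further characteristic-number congruences hold on $\Omega_{2n}^{U}$---is the genuinely hard input, namely Hattori's theorem on $K_{*}\left(MU\right)$. There is no real shortcut around it: it rests on the good homological behaviour of $MU\wedge K$, and it is exactly the content established in \cite{Hattori1966,Stong1965} and recorded in \cite[Section 14]{Conner1966}.
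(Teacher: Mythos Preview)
The paper does not supply its own proof of this theorem: it is stated as a known result, attributed to Hattori and Stong with the formulation taken from Conner--Floyd, and used thereafter as a black box. There is therefore no in-paper argument to compare your plan against.

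That said, your outline is a faithful sketch of how the theorem is actually established in the cited sources. The necessity direction is exactly the Riemann--Roch/Gysin argument you give. For sufficiency you correctly identify that the substantive content is the assertion that the $K$-theoretic congruences exhaust all congruences on $\Omega_{2n}^{U}$, which in modern language is the statement that the $K$-homology Hurewicz map $\pi_{*}(MU)\to K_{*}(MU)$ is a split monomorphism; you also correctly flag that this is the hard input and that you are ultimately citing it rather than reproving it. One small point: the Conner--Floyd isomorphism per se is not strictly needed for this argument---Hattori's original proof computes $K_{*}(MU)$ directly and compares with the image of $\pi_{*}(MU)$, and Stong's argument is more elementary, building explicit manifolds realizing the required congruences. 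Either route closes the gap you describe, so your plan is sound as a roadmap even if the details live in the references.
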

In practice, using the Hattori-Stong Theorem to determine possible
combinations of Chern numbers can be quite cumbersome. The following
simple computation aids in this process.
\begin{prop}
\label{Mayer}(\cite[Section 2.6]{Mayer1969}) Let $\xi$ be an $n$-dimensional
bundle over a manifold $M$. Formally write $c\left(\xi\right)=\left(1+x_{1}\right)\cdots\left(1+x_{n}\right)$.
Then
\[
\mbox{\emph{ch}}\gamma_{k}\left(\xi\right)=\sigma_{k}\left(e^{x_{1}}-1,\ldots,e^{x_{n}}-1\right),
\]
where $\sigma_{k}$ is the $k^{\mbox{th}}$ elementary symmetric polynomial,
and $e^{x_{i}}=\sum\limits _{j=0}^{\infty}\frac{x_{i}^{j}}{j!}$.
\end{prop}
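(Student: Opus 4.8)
The plan is to identify $\gamma_k(\xi)$ explicitly as an element of $K(M)$ by means of the splitting principle, and then to apply the Chern character, exploiting that it is a ring homomorphism. First I would write $\xi$ formally as a sum of line bundles $L_1 \oplus \cdots \oplus L_n$ with $c_1(L_i) = x_i$; passing to the flag bundle of $\xi$ makes this rigorous, since its $K$-theory (and cohomology) receives an injection from that of $M$ and $\xi$ splits over it. Because $\lambda_t$ is exponential and a line bundle has no exterior powers above the first, this gives $\lambda_t(\xi) = \prod_{i=1}^n \lambda_t(L_i) = \prod_{i=1}^n (1 + L_i t)$ in $K(M)[[t]]$.

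Next I would carry out the substitution $t \mapsto t/(1-t)$ appearing in the definition of the Atiyah $\gamma$-functions. For $\xi$ this yields
\[
\lambda_{t/(1-t)}(\xi) = \prod_{i=1}^n \left(1 + L_i\,\frac{t}{1-t}\right) = (1-t)^{-n}\prod_{i=1}^n \bigl(1 + (L_i - 1)t\bigr),
\]
while specializing each $L_i$ to the trivial line bundle gives $\lambda_{t/(1-t)}(\mathbb{C}^n) = (1-t)^{-n}$. Dividing, the factors $(1-t)^{-n}$ cancel, leaving
\[
\sum_{k=0}^\infty \gamma_k(\xi)\, t^k = \prod_{i=1}^n \bigl(1 + (L_i - 1)t\bigr) = \sum_{k=0}^n \sigma_k(L_1 - 1, \ldots, L_n - 1)\, t^k,
\]
so that $\gamma_k(\xi) = \sigma_k(L_1 - 1, \ldots, L_n - 1)$ in $K(M)$ (and vanishes for $k > n$).

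Finally, since $\mbox{ch}$ is a ring homomorphism and $\sigma_k$ is a polynomial with integer coefficients, I would apply $\mbox{ch}$ to the preceding identity term by term, using $\mbox{ch}(L_i) = e^{c_1(L_i)} = e^{x_i}$ and $\mbox{ch}(1) = 1$, to obtain
\[
\mbox{ch}\,\gamma_k(\xi) = \sigma_k\bigl(\mbox{ch}(L_1) - 1, \ldots, \mbox{ch}(L_n) - 1\bigr) = \sigma_k\bigl(e^{x_1} - 1, \ldots, e^{x_n} - 1\bigr),
\]
which is the assertion. I expect no serious obstacle here: the splitting principle and the multiplicativity of the Chern character are standard, and the only point demanding care is the formal bookkeeping --- performing the substitution $t \mapsto t/(1-t)$ consistently and checking that the trivial-bundle factor is exactly $(1-t)^{-n}$, so that it cancels cleanly and leaves the elementary symmetric functions in the $L_i - 1$.
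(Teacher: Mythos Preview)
Your argument is correct and is the standard proof: the splitting principle reduces to line bundles, the substitution $t\mapsto t/(1-t)$ turns $\lambda_t$ into the generating function for the $\gamma_k$, the trivial-bundle factor $(1-t)^{-n}$ cancels exactly as you say, and applying the ring homomorphism $\mathrm{ch}$ finishes it.

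There is nothing to compare against here: the paper does not prove this proposition but simply cites it from Mayer~\cite[Section~2.6]{Mayer1969}. Your write-up is precisely the kind of short verification one would expect for this formula, and nothing in it is problematic.
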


\subsection{Toric Varieties and Fans}

A toric variety is a normal variety that contains the torus as a dense
open subset such that the action of the torus on itself extends to
an action on the entire variety. Remarkably, there is a bijective
correspondence between these algebraic-geometric objects and convex
geometric objects like fans and polytopes. Refer to \cite{Fulton1993,Cox2011,Buchstaber2002,Oda1988}
for more information about toric varieties, including the following
well-known definitions and results.
\begin{defn}
A \emph{strongly convex rational polyhedral cone }(referred to as
a \emph{cone} after this point) spanned by \emph{generating rays $v_{1},\ldots,v_{m}\in\mathbb{Z}^{n}$
is a set of the form
\[
\mbox{\emph{pos}}\left(v_{1},\ldots,v_{m}\right)=\left\{ \sum_{k=1}^{m}a_{k}v_{k}\in\mathbb{R}^{n}|a_{k}\ge0\right\} .
\]
}A \emph{face }of a cone $\mbox{pos}\left(v_{1},\ldots,v_{m}\right)$
is a cone that lies on the boundary of $\mbox{pos}\left(v_{1},\ldots,v_{m}\right)$
whose generating rays form a subset of $\left\{ v_{1},\ldots,v_{m}\right\} $.
The empty set corresponds to the face $\left\{ 0\right\} $ of any
cone. A \emph{fan }$\Sigma$ in $\mathbb{R}^{n}$ is a simplicial
complex of cones. That is, each face of a cone in $\Sigma$ is also
a cone in $\Sigma$, and the intersection of any two cones in $\Sigma$
is a face of both cones.\end{defn}
\begin{thm}
There is a bijective correspondence between equivalence classes of
fans in $\mathbb{R}^{n}$ up to unimodular transformations and isomorphism
classes of complex $n$-dimensional toric varieties.
\end{thm}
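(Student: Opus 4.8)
The plan is to construct explicit maps in each direction between fans and toric varieties and then check that they are mutually inverse and equivariant for the natural $GL_n(\mathbb{Z})$-actions. Write $N = \mathbb{Z}^n$ for the lattice in $\mathbb{R}^n$ and $M = \operatorname{Hom}(N,\mathbb{Z})$ for its dual. Given a fan $\Sigma$, I would attach to each cone $\sigma \in \Sigma$ the affine scheme $U_\sigma = \operatorname{Spec}\mathbb{C}[\sigma^\vee \cap M]$, where $\sigma^\vee$ is the dual cone. Gordan's lemma shows that $\sigma^\vee \cap M$ is a finitely generated semigroup, so $U_\sigma$ is an affine variety; it is normal because the semigroup is saturated, it has dimension $n$ because $\sigma$ is strongly convex, and for the same reason it contains the torus $T_N = \operatorname{Spec}\mathbb{C}[M] \cong (\mathbb{C}^*)^n$ as a dense open subset on which $T_N$ acts by translation. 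When $\tau$ is a face of $\sigma$ one has $\tau^\vee \cap M = (\sigma^\vee \cap M) + \mathbb{Z}_{\geq 0}(-u)$ for a suitable $u$, which exhibits $U_\tau$ as a principal open subvariety of $U_\sigma$; since any two cones of $\Sigma$ intersect in a common face, these identifications are compatible and allow the $U_\sigma$ to be glued into a variety $X_\Sigma$. A short check then shows $X_\Sigma$ is normal, separated, and carries an extension of the $T_N$-action, hence is an $n$-dimensional toric variety.

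For the other direction, let $X$ be an $n$-dimensional toric variety with torus $T$. The key input is Sumihiro's theorem, which provides a cover of $X$ by $T$-invariant affine open subsets. Each such open set is an affine toric variety, so it is $\operatorname{Spec}\mathbb{C}[S]$ for a finitely generated saturated subsemigroup $S \subseteq M$ generating $M$ as a group, and $\sigma = S^\vee \subseteq \mathbb{R}^n$ is then a strongly convex rational polyhedral cone. The orbit--cone correspondence on each affine chart shows that intersections of these charts correspond to common faces of the associated cones, so the cones assemble into a fan $\Sigma_X$. One then verifies $X_{\Sigma_X} \cong X$ and $\Sigma_{X_\Sigma} = \Sigma$ by unravelling the two constructions on affine pieces, where the statement reduces to the classical anti-equivalence between strongly convex rational polyhedral cones in $N_{\mathbb{R}}$ and finitely generated saturated subsemigroups of $M$ that generate $M$.

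Finally, to obtain the bijection in the stated form I would note that a unimodular transformation $g \in GL_n(\mathbb{Z})$ sends a fan $\Sigma$ to a fan $g\Sigma$ and, being a lattice automorphism, induces an automorphism of $T_N$ and hence an isomorphism $X_\Sigma \cong X_{g\Sigma}$; so the construction descends to a well-defined map from $GL_n(\mathbb{Z})$-equivalence classes of fans to isomorphism classes of toric varieties. Conversely, given an isomorphism $X_\Sigma \cong X_{\Sigma'}$, after composing with an automorphism of the target one may assume it is equivariant, since the torus is a maximal torus of the automorphism group and all such are conjugate; an equivariant isomorphism restricts to an isomorphism of dense tori, which is given by an element of $GL_n(\mathbb{Z})$, and the uniqueness half of the correspondence forces this element to take $\Sigma$ to $\Sigma'$. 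This gives injectivity, and surjectivity is exactly the reverse construction above.

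The step I expect to be the main obstacle is the passage from an abstractly given toric variety to its fan, which rests entirely on Sumihiro's theorem guaranteeing a $T$-invariant affine open cover; there is no elementary substitute, and the normality hypothesis built into the definition of a toric variety is precisely what makes that theorem available. By contrast, the construction of $X_\Sigma$ from $\Sigma$ and the verification of the two compositions are routine once the affine dictionary between cones and semigroups is in hand.
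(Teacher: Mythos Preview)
The paper does not supply a proof of this theorem at all: it is stated as a piece of standard background and the reader is referred to Fulton, Cox--Little--Schenck, Buchstaber--Panov, and Oda for details. So there is no ``paper's own proof'' to compare against; your write-up is essentially the argument one finds in those references.

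That said, your outline is the standard one and is largely correct. The construction $\Sigma \mapsto X_\Sigma$ by gluing the $U_\sigma = \operatorname{Spec}\mathbb{C}[\sigma^\vee \cap M]$ along common faces, and the reverse passage via Sumihiro's theorem, are exactly what the cited texts do. You are also right that Sumihiro's theorem is the genuine content in the inverse direction and that normality is what makes it available.

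One point deserves more care. In arguing injectivity you claim that an abstract isomorphism $X_\Sigma \cong X_{\Sigma'}$ can be made equivariant because the torus is a maximal torus of the automorphism group and all maximal tori are conjugate. This is fine when the varieties are complete (so that $\operatorname{Aut}(X)$ is a linear algebraic group, by work of Matsumura--Oort and Demazure), which is the only case the paper actually uses. For arbitrary, possibly non-complete toric varieties the automorphism group need not be algebraic and the conjugacy argument is not available in that form; the usual formulation of the correspondence in the references is between fans in a fixed lattice and toric varieties equipped with a torus embedding, which sidesteps this issue. If you want the statement exactly as the paper phrases it, you should either restrict to the complete case or be explicit that ``isomorphism'' is meant in the torus-equivariant sense.
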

As a result of this correspondence, many properties of toric varieties
can be easily understood in terms of the combinatorics and geometry
of the corresponding fans.
\begin{defn}
A fan $\Sigma$ in $\mathbb{R}^{n}$ is called \emph{complete }if
the union of its cones is $\mathbb{R}^{n}$. The fan is called \emph{regular
}if every maximal $n$-dimensional cone is spanned by generating rays
that form an integer basis. \end{defn}
\begin{prop}
Consider a fan $\Sigma$ in $\mathbb{R}^{n}$. The associated toric
variety is compact if and only if $\Sigma$ is complete. The toric
variety is a smooth manifold if and only if $\Sigma$ is regular.
\end{prop}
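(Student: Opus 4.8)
The plan is to reduce both equivalences to the standard affine open cover of the toric variety and then argue locally. Write $X_{\Sigma}$ for the variety associated to $\Sigma$. Recall that $X_{\Sigma}$ is obtained by gluing the affine toric varieties $U_{\sigma}=\operatorname{Spec}\mathbb{C}\left[S_{\sigma}\right]$ indexed by the cones $\sigma\in\Sigma$, where $M\cong\mathbb{Z}^{n}$ is the dual lattice, $\sigma^{\vee}\subseteq\mathbb{R}^{n}$ is the dual cone, and $S_{\sigma}=\sigma^{\vee}\cap M$ is the associated semigroup of Laurent monomials; the gluing of $U_{\sigma}$ and $U_{\sigma'}$ along $U_{\sigma\cap\sigma'}$ is well defined precisely because the fan axioms force $\sigma\cap\sigma'$ to be a common face of both cones. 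I would first note that this patching makes $X_{\Sigma}$ separated, a fact needed for the properness argument below.

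For the smoothness statement, it suffices to observe that $X_{\Sigma}$ is a complex manifold if and only if each $U_{\sigma}$ is, and that only the maximal cones need to be tested, since every face of a smooth cone is smooth. If the generating rays of an $n$-dimensional cone $\sigma$ form an integer basis, then the dual basis generates $\sigma^{\vee}$, so $S_{\sigma}$ is free abelian of rank $n$ and $\mathbb{C}\left[S_{\sigma}\right]\cong\mathbb{C}\left[y_{1},\ldots,y_{n}\right]$, giving $U_{\sigma}\cong\mathbb{C}^{n}$. Conversely, if a maximal cone $\sigma$ is not spanned by an integer basis, I would consider the torus-fixed point $x_{\sigma}\in U_{\sigma}$ corresponding to the maximal monomial ideal of $\mathbb{C}\left[S_{\sigma}\right]$ and show its local ring is not regular: by Gordan's lemma $S_{\sigma}$ needs more than $n$ generators, so by Nakayama's lemma the maximal ideal cannot be generated by $\dim U_{\sigma}=n$ elements. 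Hence a non-regular fan gives a singular variety.

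For compactness, I would invoke the valuative criterion of properness over $\mathbb{C}$, together with the fact that a separated, finite-type $\mathbb{C}$-scheme is compact in the analytic topology if and only if it is proper. The key local computation is that, for a one-parameter subgroup $\lambda_{v}$ with $v\in\mathbb{Z}^{n}$, the limit $\lim_{t\to0}\lambda_{v}\left(t\right)$ exists in $U_{\sigma}$ if and only if $\langle u,v\rangle\ge0$ for every $u\in S_{\sigma}$, that is, if and only if $v\in\sigma$; hence the limit exists in $X_{\Sigma}$ if and only if $v$ lies in the support $\left|\Sigma\right|=\bigcup_{\sigma\in\Sigma}\sigma$. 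If $\Sigma$ is not complete, pick $v\in\mathbb{Z}^{n}$ outside the closed cone $\left|\Sigma\right|$; the corresponding map $\operatorname{Spec}\mathbb{C}((t))\to X_{\Sigma}$ does not extend over $\operatorname{Spec}\mathbb{C}[[t]]$, so $X_{\Sigma}$ is not proper, hence not compact. If $\Sigma$ is complete, I would verify the criterion directly: a $\mathbb{C}((t))$-point lands in the dense torus since the base is a trait, and after translation by a $\mathbb{C}[[t]]$-unit it has the form $t^{v}$ for some $v\in\mathbb{Z}^{n}$; completeness supplies a cone $\sigma$ with $v\in\sigma$, and then $S_{\sigma}\subseteq\left\{ u\in M:\langle u,v\rangle\ge0\right\} $ shows the point extends into $U_{\sigma}$, uniquely by separatedness.

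I expect this last direction to be the main obstacle. Making the valuative-criterion verification fully rigorous requires reducing an arbitrary trait-valued point to the monomial form $t^{v}$ and confirming that its extension genuinely lands in the chart $U_{\sigma}$ attached to the cone containing $v$. The other ingredients---the identification $U_{\sigma}\cong\mathbb{C}^{n}$ for a unimodular cone, the singularity at $x_{\sigma}$ for a non-unimodular one, and the one-parameter-subgroup obstruction when $v\notin\left|\Sigma\right|$---are comparatively routine once the dictionary between cones and monomial semigroups is set up.
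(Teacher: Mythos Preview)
The paper does not actually prove this proposition: it is listed among the ``well-known definitions and results'' for which the reader is referred to the standard texts \cite{Fulton1993,Cox2011,Buchstaber2002,Oda1988}. So there is no in-paper argument to compare against; your sketch is essentially the argument one finds in those references (the affine chart $U_{\sigma}\cong\mathbb{C}^{n}$ for a unimodular cone, the singular fixed point otherwise, and the one-parameter-subgroup/valuative-criterion characterisation of properness).

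One point worth tightening: the claim that ``a $\mathbb{C}((t))$-point lands in the dense torus since the base is a trait'' is not literally true---the image could lie in a smaller orbit $O_{\tau}$. The usual fix is to observe that the closure $\overline{O_{\tau}}$ is itself the toric variety of the star fan $\mathrm{Star}(\tau)$, which is again complete when $\Sigma$ is, so one reduces by induction on dimension to the case where the generic point does land in $T$. After that reduction your translation-to-$t^{v}$ argument goes through. You already flagged this step as the main obstacle, and that diagnosis is correct; with the orbit-closure induction added, the sketch becomes the standard proof.
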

Other topological characteristics that are difficult to compute for
algebraic varieties in general are relatively straight-forward to
determine for toric varieties. For example, the integer cohomology
of toric varieties can be described in terms of the corresponding
fans.
\begin{thm}
\label{cohom}(\cite{Danilov1978,Jurkiewicz1985}) Let $\Sigma$ be
a complete regular fan in $\mathbb{R}^{n}$ with generating rays $v_{1},\ldots,v_{m}$
whose coordinates are given by $v_{j}=\left(\lambda_{1j},\ldots,\lambda_{nj}\right)$.
For $i=1,\ldots,n$, set
\[
\theta_{i}=\lambda_{i1}v_{1}+\ldots+\lambda_{im}v_{m}\in\mathbb{Z}\left[v_{1},\ldots,v_{m}\right].
\]
Define $L=\left(\theta_{1},\ldots,\theta_{n}\right)$ to be the ideal
generated by these linear polynomials. Also, define $J$ to be the
ideal generated by all square-free monomials $v_{i_{i}}\cdots v_{i_{k}}$
such that $v_{i_{1}},\ldots,v_{i_{k}}$ do not span a cone in $\Sigma$.
(The ideal $J$ is the Stanley-Reisner ideal of the simplicial complex
$\Sigma$.) The integral cohomology ring of the smooth toric variety
$X_{\Sigma}$ associated to $\Sigma$ is
\[
H^{*}\left(X_{\Sigma}\right)\cong\mathbb{Z}\left[v_{1},\ldots v_{m}\right]/\left(L+J\right).
\]

\end{thm}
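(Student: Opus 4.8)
The plan is to build the isomorphism explicitly out of torus-invariant divisors and then pin it down by a Hilbert-series count, using that $X_\Sigma$ carries a cell decomposition with cells only in even dimensions. \emph{Constructing the map.} Each generating ray $v_j$ corresponds to a codimension-one torus orbit whose closure $D_j\subset X_\Sigma$ is a smooth invariant divisor; since $X_\Sigma$ is smooth and compact, Poincar\'e duality provides a class $[D_j]\in H^2(X_\Sigma)$. Sending $v_j\mapsto[D_j]$, with each $v_j$ placed in degree $2$, defines a graded ring homomorphism $\Phi\colon\mathbb{Z}[v_1,\dots,v_m]\to H^*(X_\Sigma)$, and one checks that $\Phi$ kills $L+J$. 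For a square-free monomial generating $J$, the rays $v_{i_1},\dots,v_{i_k}$ do not span a cone of $\Sigma$, so the orbit--cone correspondence gives $D_{i_1}\cap\dots\cap D_{i_k}=\emptyset$; since $\Sigma$ is regular these divisors meet transversally whenever they meet, so $[D_{i_1}]\cdots[D_{i_k}]=0$. For the linear form $\theta_i$, the $i$-th coordinate functional is a character of the torus, hence a rational function on $X_\Sigma$ whose principal divisor is $\sum_j\lambda_{ij}D_j$; principal divisors are null-homologous, so $\Phi(\theta_i)=\sum_j\lambda_{ij}[D_j]=0$. Hence $\Phi$ descends to $\overline\Phi\colon\mathbb{Z}[v_1,\dots,v_m]/(L+J)\to H^*(X_\Sigma)$.

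\emph{Matching Hilbert series.} A generic one-parameter subgroup of the torus induces a Bia\l ynicki--Birula decomposition of $X_\Sigma$ into affine cells of even real dimension, so $H^*(X_\Sigma)$ is free abelian, concentrated in even degrees, with $\sum_i\bigl(\mathrm{rank}\,H^{2i}(X_\Sigma)\bigr)t^i$ equal to the $h$-polynomial of the simplicial fan $\Sigma$. On the algebraic side, $\mathbb{Z}[v_1,\dots,v_m]/J$ is the Stanley--Reisner ring of $\Sigma$; completeness makes $\Sigma$ a simplicial sphere, hence Cohen--Macaulay of Krull dimension $n$. Regularity forces, for each maximal cone with ray set $\{v_{j_1},\dots,v_{j_n}\}$, the determinant $\det(\lambda_{i,j_\ell})$ to be $\pm1$, which is precisely the condition that $\theta_1,\dots,\theta_n$ restrict to a system of parameters on every such coordinate face; together with Cohen--Macaulayness this makes $\theta_1,\dots,\theta_n$ a regular sequence on $\mathbb{Z}[v_1,\dots,v_m]/J$ (and the $\pm1$ is what lets the argument run over $\mathbb{Z}$ rather than only $\mathbb{Q}$). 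Therefore the Hilbert series of $\mathbb{Z}[v_1,\dots,v_m]/(L+J)$ is $(1-t)^n$ times that of the Stanley--Reisner ring, i.e.\ again the $h$-polynomial of $\Sigma$, so the quotient is free abelian in each degree of rank $\mathrm{rank}\,H^{2i}(X_\Sigma)$.

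\emph{Finishing.} A degree-preserving surjection of free abelian groups with equal finite ranks in each degree is an isomorphism, so it remains to show $\overline\Phi$ is onto, i.e.\ that $H^*(X_\Sigma)$ is generated as a ring by the $[D_j]$. The even cell decomposition forces the cycle class map $A_*(X_\Sigma)\to H_*(X_\Sigma)$ to be surjective, hence (the ranks agreeing) an isomorphism; $A_*(X_\Sigma)$ is generated by the classes of the torus-invariant subvarieties $V(\tau)$, and for a regular fan $[V(\tau)]=\prod_{\rho\subset\tau}[D_\rho]$. Thus $H^*(X_\Sigma)$ is generated by the $[D_j]$ and $\overline\Phi$ is an isomorphism.

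\emph{Expected main obstacle.} The genuinely nonformal content is the ``no further relations'' direction, which this plan isolates into two standard facts: that the Betti numbers of $X_\Sigma$ realize the $h$-vector of $\Sigma$ (via the even cell decomposition), and that $\theta_1,\dots,\theta_n$ form a regular sequence on the Stanley--Reisner ring (via Cohen--Macaulayness of the simplicial sphere together with the $\pm1$ determinant condition from regularity). An alternative packaging of the same input: show $H^*_T(X_\Sigma;\mathbb{Z})\cong\mathbb{Z}[v_1,\dots,v_m]/J$ using equivariant formality (again a consequence of the even cells), then specialize along $H^*_T(\mathrm{pt})\cong\mathbb{Z}[\theta_1,\dots,\theta_n]\to\mathbb{Z}$ to obtain $H^*(X_\Sigma)\cong H^*_T(X_\Sigma)\otimes_{\mathbb{Z}[\theta_1,\dots,\theta_n]}\mathbb{Z}$, which introduces exactly the ideal $L$.
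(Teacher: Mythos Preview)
The paper does not prove this theorem at all: it is stated with attribution to Danilov and Jurkiewicz and used as a black box throughout. So there is no ``paper's own proof'' to compare against, and your proposal should be assessed on its own merits.

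Your outline is one of the standard routes (close to Fulton's treatment) and is essentially sound. The construction of $\overline\Phi$ and the verification that $L+J$ lies in the kernel are correct, as is the surjectivity argument via $A_*(X_\Sigma)\to H_*(X_\Sigma)$ and the identity $[V(\tau)]=\prod_{\rho\subset\tau}[D_\rho]$ for regular fans. The Betti-number/$h$-vector match via the Bia\l ynicki--Birula (or torus-height) cell decomposition is also correct.

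The one place that deserves a sharper sentence is the passage from ``$\theta_1,\dots,\theta_n$ is a regular sequence on $\mathbb{Z}[\Sigma]$'' to ``the quotient is \emph{free} abelian in each degree.'' Over a field, Cohen--Macaulayness of the simplicial sphere plus the Kind--Kleinschmidt rank condition suffices; over $\mathbb{Z}$, a regular sequence on a free module can produce torsion in the quotient. You correctly flag that the unimodularity (the $\pm1$ determinants from regularity) is the extra input, but as written the freeness is asserted rather than argued. A clean way to close this gap is to invoke a shelling of the simplicial sphere $\partial\Sigma$: each shelling step contributes, modulo $L$, a summand isomorphic to $\mathbb{Z}[v_{j_1},\dots,v_{j_n}]/(\theta_1,\dots,\theta_n,\ \text{some subset of the }v_{j_\ell})$, and unimodularity of the matrix $(\lambda_{i,j_\ell})$ makes each such summand free of rank one. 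Alternatively, your equivariant-cohomology repackaging at the end handles the integral case more transparently, since equivariant formality over $\mathbb{Z}$ follows directly from the even-cell decomposition and gives $H^*(X_\Sigma)\cong H^*_T(X_\Sigma)\otimes_{H^*_T(\mathrm{pt})}\mathbb{Z}$ without a separate freeness check on the Stanley--Reisner side.
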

Characteristic numbers of toric varieties can also be understood in
terms of the corresponding fans. This is in part because of the following
convenient property.
\begin{prop}
\label{one}(\cite[Section 5.1]{Fulton1993}) Suppose $\mbox{pos}\left(v_{1},\ldots,v_{n}\right)$
is a maximal cone of a complete regular fan $\Sigma$ in $\mathbb{R}^{n}$.
As in the previous theorem, we also let each $v_{k}$ represent the
corresponding cohomology ring generator. Then $\left\langle v_{1}\cdots v_{n},\mu_{X_{\Sigma}}\right\rangle =1.$
\end{prop}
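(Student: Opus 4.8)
The plan is to identify the top-degree monomial $v_1\cdots v_n$ with the cohomology class of a single point and then evaluate it against the fundamental class. The first step is to recall the geometric meaning of the generators supplied by Theorem \ref{cohom}: the isomorphism there is normalized so that $v_i$ is the class dual to the torus-invariant divisor $D_i=V(\rho_i)$, the closure of the codimension-one orbit attached to the ray through $v_i$. More generally, by the orbit--cone correspondence, if rays $v_{i_1},\ldots,v_{i_k}$ span a cone $\tau\in\Sigma$ then $D_{i_1}\cap\cdots\cap D_{i_k}=V(\tau)$, an $(n-k)$-dimensional toric subvariety, while if they span no cone the intersection is empty --- which is precisely the Stanley--Reisner part of the relation ideal. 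So I would begin by recording this dictionary between squarefree monomials in the $v_i$ and torus-invariant orbit closures.

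Applying it to $\sigma:=\mbox{pos}(v_1,\ldots,v_n)$, an $n$-dimensional cone in $\mathbb{R}^n$, the associated orbit is $0$-dimensional, so $V(\sigma)$ is a single torus-fixed point $x_\sigma$ lying in the affine chart $U_\sigma$; moreover no divisor $D_j$ with $\rho_j\not\subseteq\sigma$ contains $x_\sigma$, so $D_1\cap\cdots\cap D_n=\{x_\sigma\}$ as sets. Since $\Sigma$ is regular, $v_1,\ldots,v_n$ form an integer basis, hence $U_\sigma\cong\mathbb{C}^n$ with $x_\sigma$ the origin and $D_i\cap U_\sigma=\{z_i=0\}$; the $n$ coordinate hyperplanes meet transversally at the origin with local intersection number one. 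Therefore $v_1\cdots v_n=[x_\sigma]$ is the class of a point, and pairing with $\mu_{X_\Sigma}$ under the complex (hence canonical) orientation gives $\langle v_1\cdots v_n,\mu_{X_\Sigma}\rangle=1$.

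The only delicate point --- and the only place where regularity of $\Sigma$ is genuinely used --- is the multiplicity-one claim, namely that the intersection product $D_1\cdots D_n$ equals $[x_\sigma]$ with coefficient exactly $1$ (on a singular complete toric variety the analogous product is merely a positive rational). The chart $U_\sigma\cong\mathbb{C}^n$ handles this by reducing it to the transverse intersection of coordinate hyperplanes, so I expect this step to be routine once the dictionary above is in place. As a backup that avoids an explicit transversality argument, one can induct on $n$ using the projection formula for the inclusion $D_1\hookrightarrow X_\Sigma$: because $v_1$ is dual to $[D_1]$ one gets $\langle v_1\cdots v_n,\mu_{X_\Sigma}\rangle=\langle(v_2\cdots v_n)|_{D_1},\mu_{D_1}\rangle$, where $D_1$ is the complete regular toric variety of the star fan of $\rho_1$ in $\mathbb{R}^{n-1}$, the images of $v_2,\ldots,v_n$ span a maximal cone of that fan and restrict to its divisor classes, and the base case $n=1$ is $X_\Sigma=\mathbb{P}^1$ with $\langle v_1,\mu\rangle=1$.
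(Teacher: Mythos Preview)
Your argument is correct. Note, however, that the paper does not supply its own proof of this proposition: it is stated with a citation to \cite[Section 5.1]{Fulton1993} and used as a black box in the Chern-number computations. Your approach---identifying each $v_i$ with the Poincar\'e dual of the torus-invariant divisor $D_i$, invoking the orbit--cone correspondence to see that $D_1\cap\cdots\cap D_n$ is the single fixed point $x_\sigma$, and using regularity of $\sigma$ to get the transversality (hence multiplicity one) in the affine chart $U_\sigma\cong\mathbb{C}^n$---is precisely the standard argument found in Fulton's Section 5.1, so there is nothing to compare. The inductive backup via the projection formula and the star fan of $\rho_1$ is also valid and is a nice alternative, though not needed once the local chart is available.
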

If we consider a toric variety as a complex manifold, then its fan
also determines a stable splitting of the tangent bundle corresponding
to its standard complex structure. This allows the Chern class of
a toric variety to be expressed in terms of the associated fan.
\begin{thm}
\label{Chern}(see \cite[Section 5.3]{Buchstaber2002} for details)
Let $\Sigma$ be a complete regular fan in $\mathbb{R}^{n}$ with
generating rays $v_{1},\ldots,v_{m}$. The total Chern class of the
associated smooth toric variety $X_{\Sigma}$ is given by
\[
c\left(X_{\Sigma}\right)=\left(1+v_{1}\right)\left(1+v_{2}\right)\cdots\left(1+v_{m}\right)\in H^{*}\left(X_{\Sigma}\right).
\]

\end{thm}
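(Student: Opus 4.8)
The plan is to derive the formula from the generalized Euler exact sequence of $X_\Sigma$ together with the presentation of $H^*(X_\Sigma)$ in Theorem \ref{cohom}. Recall first that each generating ray $v_j$ determines a torus-invariant prime divisor $D_j\subset X_\Sigma$, and that under the Danilov--Jurkiewicz isomorphism of Theorem \ref{cohom} the cohomology class $[D_j]\in H^2(X_\Sigma)$ is exactly the ring generator also denoted $v_j$. Hence it is enough to prove $c(X_\Sigma)=\prod_{j=1}^m(1+[D_j])$ in $H^*(X_\Sigma)$.

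The key input is the exact sequence of vector bundles on $X_\Sigma$
\[
0\longrightarrow \mathcal{O}_{X_\Sigma}^{\oplus(m-n)}\longrightarrow \bigoplus_{j=1}^m\mathcal{O}_{X_\Sigma}(D_j)\longrightarrow T_{X_\Sigma}\longrightarrow 0 ,
\]
which I would obtain from the Cox quotient presentation $X_\Sigma=(\mathbb{C}^m\setminus Z)/G$, where $Z$ is the vanishing locus of the Stanley--Reisner ideal and $G\cong(\mathbb{C}^{\times})^{m-n}$. Regularity of $\Sigma$ is precisely the condition that makes $G$ act freely, so that the quotient map $\pi$ is a principal $G$-bundle; dividing the trivial rank-$m$ tangent bundle of $\mathbb{C}^m$, restricted to $\mathbb{C}^m\setminus Z$, by the subbundle of vectors tangent to the $G$-orbits and then descending along $\pi$ produces the sequence above. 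The $j$-th summand $\mathcal{O}_{X_\Sigma}(D_j)$ appears because the coordinate hyperplane $\{z_j=0\}$ descends to $D_j$, and the left-hand trivial summand is the descent of $\mathrm{Lie}(G)\otimes\mathcal{O}$, of rank $\dim G=m-n$. (See \cite{Cox2011} for the algebro-geometric details.)

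Granting the Euler sequence, the Whitney sum formula gives $c(T_{X_\Sigma})\cdot c(\mathcal{O}_{X_\Sigma}^{\oplus(m-n)})=\prod_{j=1}^m c(\mathcal{O}_{X_\Sigma}(D_j))$; since a trivial bundle has total Chern class $1$ and $c(\mathcal{O}_{X_\Sigma}(D_j))=1+c_1(\mathcal{O}_{X_\Sigma}(D_j))=1+[D_j]$, we obtain $c(X_\Sigma)=\prod_{j=1}^m(1+[D_j])=\prod_{j=1}^m(1+v_j)$, as claimed. As a check, the formula specializes correctly on $\mathbb{P}^n$, where all $m=n+1$ divisors are hyperplanes, and on products of projective spaces.

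The main obstacle is the honest construction of the Euler sequence: one must define the bundle maps globally, verify exactness along the strata where several of the $D_j$ meet, and identify the cokernel with $T_{X_\Sigma}$ --- equivalently, check that $G$ acts freely on $\mathbb{C}^m\setminus Z$ with geometric quotient $X_\Sigma$, which is again where the regularity of $\Sigma$ enters. An alternative that stays within the toric-topological framework of this paper is to build the stable splitting directly from the moment-angle manifold $\mathcal{Z}_\Sigma$: one writes $X_\Sigma=\mathcal{Z}_\Sigma/T^{m-n}$ and pulls back the splitting of the trivial bundle on $\mathbb{C}^m$ into its $m$ coordinate line summands, so that $T_{X_\Sigma}$ becomes stably a sum of $m$ line bundles $\rho_1,\ldots,\rho_m$. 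In the projective case this is just the Davis--Januszkiewicz description of $X_\Sigma$ as a quasitoric manifold, and the only arithmetic left is to check that $c_1(\rho_j)=v_j$, which amounts to the linear relations $\theta_i$ of Theorem \ref{cohom}.
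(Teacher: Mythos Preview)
The paper does not supply its own proof of this theorem; it simply defers to \cite[Section~5.3]{Buchstaber2002}, and the sentence immediately preceding the statement indicates the intended mechanism: ``its fan also determines a stable splitting of the tangent bundle corresponding to its standard complex structure.'' In other words, the paper's implicit argument is precisely the moment-angle/Davis--Januszkiewicz route that you mention as your \emph{alternative} at the end: write $X_\Sigma$ as a quotient of the moment-angle complex by a free torus action, so that $T_{X_\Sigma}\oplus\mathbb{R}^{2(m-n)}$ splits as a sum of $m$ complex line bundles $\rho_j$ with $c_1(\rho_j)=v_j$.

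Your primary argument via the generalized Euler sequence from the Cox quotient is correct and is the standard algebro-geometric proof (it appears, for instance, in \cite{Cox2011}). It is genuinely a different route: it stays in the algebraic category and uses the Whitney formula for an exact sequence of holomorphic bundles, whereas the Buchstaber--Panov argument is purely topological and produces a stable smooth splitting. The two are of course closely related, since the Cox quotient $(\mathbb{C}^m\setminus Z)/(\mathbb{C}^\times)^{m-n}$ and the moment-angle quotient $\mathcal{Z}_\Sigma/T^{m-n}$ are the same space, and the rank-$(m-n)$ trivial subbundle in your Euler sequence is the complexification of the Lie algebra of the compact torus in theirs. What your approach buys is that it works uniformly in the algebraic category and identifies the complex structure on the nose; what the Buchstaber--Panov approach buys is that it extends verbatim to quasitoric manifolds, which is the level of generality the paper cares about elsewhere. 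There is no gap in your sketch.
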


\subsection{Polytopes and Projective Toric Varieties}

Consider an $n$-dimensional lattice polytope $P$. Such a polytope
can be used to obtain a complete fan as follows. First, let $v_{k}$
be a vector that is normal to a facet $F_{k}$ of $P$ and pointing
inwards. Repositioning all of these inward normal vectors at the origin
and choosing their lengths so that they have relatively prime integer
coordinates produces a fan called the \emph{normal fan} of the polytope.
The generating rays of the normal fan are the inward normal vectors,
and a set of generating rays forms a cone in the fan if and only if
the corresponding facets of $P$ have a nonempty intersection. Refer
to \cite{Buchstaber2002,Fulton1993,Cox2011} for details about polytopes
and their relation to toric varieties, including the following results.
\begin{prop}
A toric variety is projective if and only if it corresponds to the
normal fan of some lattice polytope.
\end{prop}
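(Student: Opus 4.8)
The plan is to recast the proposition in terms of torus--invariant divisors on $X_{\Sigma}$ and their support functions, and then appeal to the standard ampleness criterion for toric varieties (see \cite{Fulton1993,Cox2011,Buchstaber2002}). Recall that on a complete toric variety every torus--invariant Cartier divisor $D=\sum_{k}a_{k}D_{k}$, where $D_{k}$ is the prime divisor attached to the ray $\mathbb{R}_{\ge 0}v_{k}$ of $\Sigma$, determines an integral piecewise--linear \emph{support function} $\psi_{D}$ on $\mathbb{R}^{n}$ that is linear on each cone of $\Sigma$ and satisfies $\psi_{D}(v_{k})=-a_{k}$; moreover $D$ is ample if and only if $\psi_{D}$ is \emph{strictly convex}, meaning that its linear pieces on distinct maximal cones are distinct linear functionals. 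Since a toric variety is projective precisely when it is complete and carries an ample line bundle, the proposition amounts to matching strictly convex support functions on complete fans with lattice polytopes.

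For the ``if'' direction, let $P\subset\mathbb{R}^{n}$ be an $n$--dimensional lattice polytope presented through its facets as $P=\{\,u:\langle u,v_{k}\rangle\ge -a_{k}\text{ for }k=1,\dots,m\,\}$, where $v_{1},\dots,v_{m}$ are the primitive inward facet normals --- exactly the generating rays of the normal fan $\Sigma_{P}$. Each $a_{k}=-\min_{u\in P}\langle u,v_{k}\rangle$ is an integer, since the minimum is attained at a lattice vertex of $P$. The fan $\Sigma_{P}$ is complete, so $X_{\Sigma_{P}}$ is compact, and I would take $D_{P}=\sum_{k}a_{k}D_{k}$. On the maximal cone of $\Sigma_{P}$ that is the normal cone at a vertex $u_{0}$ of $P$, the support function $\psi_{D_{P}}$ restricts to the linear functional $u\mapsto\langle u_{0},u\rangle$; distinct vertices give distinct functionals, so $\psi_{D_{P}}$ is strictly convex, hence $D_{P}$ is ample and $X_{\Sigma_{P}}$ is projective. (Alternatively one can exhibit the projective embedding directly, via the lattice points of $P$ or of a sufficiently large dilate $kP$.)

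For the ``only if'' direction, suppose $X_{\Sigma}$ is projective. Then $X_{\Sigma}$ is compact, so $\Sigma$ is complete, since a toric variety is compact if and only if its fan is complete; moreover $X_{\Sigma}$ carries an ample line bundle. Every line bundle on a toric variety admits a torus--equivariant structure, so we may take the ample bundle to be the class of a torus--invariant divisor $D=\sum_{k}a_{k}D_{k}$, whose support function $\psi_{D}$ is then strictly convex. I would set
\[
P_{D}=\{\,u\in\mathbb{R}^{n}:\langle u,v_{k}\rangle\ge -a_{k}\text{ for all }k\,\}.
\]
Strict convexity of $\psi_{D}$ forces $P_{D}$ to be a bounded, $n$--dimensional polytope whose vertices correspond bijectively to the maximal cones of $\Sigma$, the normal cone at each vertex being the associated maximal cone; thus the normal fan of $P_{D}$ is exactly $\Sigma$. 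Replacing $D$ by a suitable positive integer multiple if necessary puts all the $a_{k}$, and hence all vertices of $P_{D}$, in the lattice, so $P_{D}$ is a lattice polytope with normal fan $\Sigma$.

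The crux --- and the step I expect to be the main obstacle --- is the ampleness criterion itself, namely the equivalence between ampleness of $D$ and strict convexity of $\psi_{D}$, together with the bookkeeping that turns strict convexity into an honest bijection between the maximal cones of $\Sigma$ and the vertices of $P_{D}$ that respects normal cones; this is exactly where completeness of $\Sigma$ and the torus action enter in an essential way. By contrast, upgrading an arbitrary ample bundle to a torus--invariant one and clearing denominators to obtain a genuine lattice polytope are routine. All of this is classical, so in the body of the paper it suffices to cite \cite{Fulton1993,Cox2011,Buchstaber2002}; the outline above merely records the shape of that standard argument.
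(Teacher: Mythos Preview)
Your sketch is correct and is exactly the classical argument via support functions and the toric ampleness criterion found in the references \cite{Fulton1993,Cox2011,Buchstaber2002}. The paper itself does not give a proof of this proposition at all; it simply states it as a known result and defers to those same references, so there is nothing further to compare --- your outline is precisely what one would find there, and your closing remark that a citation suffices matches the paper's treatment.
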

This correspondence means that certain properties of projective toric
varieties can be understood by studying characteristics of the associated
polytopes.
\begin{defn}
An $n$-dimensional lattice polytope $P$ is called \emph{simple }if
exactly $n$-many edges meet at each of its vertices. A simple polytope
is called \emph{smooth }if at each of its vertices, the edges emanating
from the vertex form an integer basis.
\end{defn}
It is straight-forward to show that the normal fan to a smooth polytope
is regular. This gives the following
\begin{prop}
A projective toric variety is a smooth manifold if and only if it
corresponds to a smooth polytope.
\end{prop}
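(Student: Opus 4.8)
The plan is to reduce everything to two facts already in hand: a projective toric variety is isomorphic to $X_{\Sigma}$ where $\Sigma$ is the normal fan of some lattice polytope $P$, and $X_{\Sigma}$ is a smooth manifold exactly when $\Sigma$ is regular. Granting these, the proposition becomes equivalent to the purely combinatorial statement that the normal fan of a lattice polytope $P$ is regular if and only if $P$ is smooth. The implication ``$P$ smooth $\Rightarrow$ $X_\Sigma$ smooth'' is then immediate from the remark preceding the proposition together with the regularity criterion, so the real content is the reverse direction and the reverse combinatorial implication.

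First I would localize at a vertex. The maximal cones of the normal fan $\Sigma$ of $P$ correspond to the vertices of $P$, the cone $\sigma_w$ attached to a vertex $w$ being spanned by the primitive inward normals $u_1,\dots,u_k$ to the facets $F_1,\dots,F_k$ of $P$ containing $w$. If $\Sigma$ is regular then each $\sigma_w$ is simplicial with exactly $n$ rays, which forces exactly $n$ facets through every vertex, i.e.\ $P$ is simple; conversely $P$ simple is precisely the condition that every $\sigma_w$ has $n$ generating rays. So I may assume $P$ simple and let $e_1,\dots,e_n$ be the primitive integer vectors along the $n$ edges of $P$ at $w$, indexed so that $e_i$ is the edge direction lying in every facet $F_j$ through $w$ with $j\ne i$.

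The key step is the biorthogonality relations $\langle u_j,e_i\rangle=0$ for $i\ne j$ and $\langle u_i,e_i\rangle>0$: the first holds because $u_j$ is normal to $F_j$ and $e_i\subset F_j$, and the second because moving from $w$ along $e_i$ enters the interior of $P$, which near $w$ is cut out by the inequalities $\langle u_j,x-w\rangle\ge 0$, $j=1,\dots,n$. Thus $\{u_1,\dots,u_n\}$ and $\{e_1,\dots,e_n\}$ are dual bases of $\mathbb{R}^n$ up to positive scalars. Since a family of lattice vectors is a $\mathbb{Z}$-basis of $\mathbb{Z}^n$ if and only if its dual family (under the standard pairing, identifying $\mathbb{Z}^n$ with its dual lattice) is a $\mathbb{Z}$-basis, and since primitivity of each $e_i$ and of each $u_i$ forces all these positive scalars to equal $1$, I conclude that $\{e_1,\dots,e_n\}$ is an integer basis if and only if $\{u_1,\dots,u_n\}$ is, i.e.\ if and only if $\sigma_w$ is a regular cone. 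Letting $w$ range over all vertices gives ``$P$ smooth $\iff$ $\Sigma$ regular'', and combining this with the cited propositions on projectivity and on smoothness of $X_\Sigma$ finishes the proof.

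I expect the main obstacle to be making the duality argument airtight --- in particular justifying that the primitive edge vectors and the primitive inward facet normals at a simple vertex are genuinely dual bases and not merely proportional to dual bases, which is exactly the place where primitivity of both families is used. The secondary point requiring care is the reduction to the simple case, so that the pairing between edges and facets at a vertex is well defined; this is routine once one knows a regular fan is simplicial.
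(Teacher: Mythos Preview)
The paper does not supply its own proof of this proposition: it remarks that ``the normal fan to a smooth polytope is regular'' is straightforward and otherwise defers to the standard references listed at the start of the subsection. Your strategy---reduce to the combinatorial equivalence ``$P$ smooth $\iff$ normal fan of $P$ regular'' and verify the latter vertex by vertex via the pairing between primitive edge directions $e_i$ and primitive inward facet normals $u_i$---is the standard one and is essentially correct.

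There is, however, a genuine gap precisely where you anticipated. The assertion that ``primitivity of each $e_i$ and of each $u_i$ forces all these positive scalars to equal $1$'' is false as a standalone claim. At the vertex $(1,0)$ of the lattice triangle with vertices $(0,0)$, $(1,0)$, $(0,2)$ one finds primitive edge directions $e_1=(-1,0)$, $e_2=(-1,2)$ and primitive inward facet normals $u_1=(-2,-1)$, $u_2=(0,1)$, yet $\langle u_i,e_i\rangle=2$ for both $i$; all four vectors are primitive but the pairing is $2I$, not $I$, and indeed neither family is a $\mathbb{Z}$-basis. What goes wrong is that you are trying to use primitivity to identify $\{e_i\}$ with the abstract dual basis $\{u_i^*\}$ \emph{before} knowing that the $u_i^*$ are lattice vectors. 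The fix is to run the implication one direction at a time: assuming $\{u_1,\dots,u_n\}$ is a $\mathbb{Z}$-basis, the dual basis $\{u_i^*\}$ (with $\langle u_j,u_i^*\rangle=\delta_{ij}$) is again a $\mathbb{Z}$-basis, so each $u_i^*$ is a primitive integer vector; the biorthogonality relations then force $e_i=d_i u_i^*$ with $d_i\in\mathbb{Z}_{>0}$, and \emph{now} primitivity of $e_i$ against the integer vector $u_i^*$ gives $d_i=1$, whence $\{e_i\}=\{u_i^*\}$ is a $\mathbb{Z}$-basis. The converse is symmetric. With this repair your argument is complete.
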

The $f$-\emph{vector }of a simple polytope $P$ is $f\left(P\right)=\left(f_{0},\ldots,f_{n-1}\right)$,
where $f_{k}$ is the number of faces of dimension $n-k-1$ in $P$.
Also set $f_{-1}=1$ for any polytope. In subsequent sections, these
combinatorial invariants will be used to reveal information about
the complex cobordism of the corresponding smooth projective toric
varieties.

There are several other useful ways of representing the information
contained in the $f$-vector. For example, the $h$\emph{-vector}
$h\left(P\right)=\left(h_{0},\ldots,h_{n}\right)$ of $P$ is defined
by
\[
\sum_{k=0}^{n}h_{k}t^{n-k}=\sum_{k=0}^{n}f_{k-1}\left(t-1\right)^{n-k}.
\]
The Dehn-Sommerville relations tell us that the $h$-vector of any
simple polytope is symmetric, i.e. $h_{k}=h_{n-k}$ for $k=0,\ldots,n$
(see e.g. \cite[Section 1.2]{Buchstaber2002}). This means that all
of the information about the number of faces of a simple polytope
is actually contained in the first half of its $h$-vector. The $g$\emph{-vector}
$g\left(P\right)=\left(g_{0},g_{1},\ldots g_{\left\lfloor n/2\right\rfloor }\right)$
of $P$, defined by $g_{0}=1$ and $g_{k}=h_{k}-h_{k-1}$ for $k=1,\ldots,\left\lfloor \frac{n}{2}\right\rfloor $,
therefore holds exactly the same information.

In 1980, Stanley, Billera, and Lee provided conditions that describe
exactly which vectors are the $g$-vectors of simple polytopes \cite{Stanley1980,Billera1980}.
We must introduce some additional notation in order to understand
these conditions (see \cite[Section 1.3]{Buchstaber2002} for details).
Consider the unique binomial $i$-expansion of a positive integer
$a$:
\[
a={a_{i} \choose i}+{a_{i-1} \choose i-1}+\ldots+{a_{j} \choose j},
\]
where $1\le j\le a_{j}\le\ldots\le a_{i-1}\le a_{i}$. Given this
expansion, define the integers
\[
a^{\left\langle i\right\rangle }={a_{i}+1 \choose i+1}+{a_{i-1}+1 \choose i}+\ldots+{a_{j}+1 \choose j+1}
\]
and $0^{\left\langle i\right\rangle }=0$.
\begin{thm}
[$g$-theorem]\label{g} An integer vector $\left(g_{0},g_{1},\ldots,g_{\left\lfloor n/2\right\rfloor }\right)$
is the $g$-vector of a simple $n$-polytope if and only if
\begin{enumerate}
\item $g_{0}=1$,
\item $g_{1}\ge0$, and
\item $0\le g_{k+1}\le g_{k}^{\left\langle k\right\rangle }$ for $k=1,\ldots,\left\lfloor \frac{n}{2}\right\rfloor -1$.
\end{enumerate}
\end{thm}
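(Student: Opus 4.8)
The plan is to prove the statement by reducing to the simplicial case via polar duality and then invoking the two classical ingredients behind McMullen's conjecture. Since the $f$-vector, and hence the $h$- and $g$-vectors, depend only on the face lattice of a polytope, and since polar duality reverses face lattices, a simple $n$-polytope $P$ and the simplicial $n$-polytope $P^{\ast}$ have the same $g$-vector. It therefore suffices to characterize the $g$-vectors of simplicial $n$-polytopes, and I would handle \emph{necessity} (every such $g$-vector satisfies (1)--(3)) and \emph{sufficiency} (every integer vector satisfying (1)--(3) is realized) as separate arguments.

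For necessity I would use toric geometry, which the excerpt has already set up. Given a simplicial $n$-polytope $Q$, a sufficiently small perturbation of its vertices keeps it simplicial of the same combinatorial type, hence with the same $g$-vector; since rational points are dense, I may assume $Q$ is rational and pass to the projective toric variety $X$ attached to its normal fan $\Sigma$. Even though $\Sigma$ need not be regular, $X$ is a projective $\mathbb{Q}$-homology manifold, and the Danilov--Jurkiewicz presentation of Theorem \ref{cohom} holds with rational coefficients: $H^{\ast}(X;\mathbb{Q})\cong\mathbb{Q}[v_{1},\dots,v_{m}]/(L+J)$, concentrated in even degrees with $\dim_{\mathbb{Q}}H^{2k}(X;\mathbb{Q})=h_{k}$. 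Poincar\'e duality for $X$ recovers the Dehn--Sommerville symmetry $h_{k}=h_{n-k}$. Because $X$ is projective, the Hard Lefschetz theorem applies: multiplication by an ample class $\omega\in H^{2}(X;\mathbb{Q})$ is injective $H^{2k}\to H^{2k+2}$ for $2k<n$, so $h_{k-1}\le h_{k}$ and thus $g_{k}=h_{k}-h_{k-1}\ge0$; in particular $g_{0}=h_{0}=1$ and $g_{1}\ge0$, giving (1) and (2). Finally, $R=H^{\ast}(X;\mathbb{Q})/(\omega)$ is a standard graded $\mathbb{Q}$-algebra generated in degree two, concentrated in degrees $0,2,\dots,2\lfloor n/2\rfloor$ with $\dim_{\mathbb{Q}}R^{2k}=h_{k}-h_{k-1}=g_{k}$; Macaulay's theorem characterizing Hilbert functions of standard graded algebras then yields exactly the bound $g_{k+1}\le g_{k}^{\langle k\rangle}$, which is (3).

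For sufficiency I would appeal to the explicit construction of Billera and Lee \cite{Billera1980}. The key point is that conditions (2)--(3) are precisely the Macaulay conditions for $(g_{0},\dots,g_{\lfloor n/2\rfloor})$ to be the Hilbert function of some standard graded algebra, equivalently the face-count vector of an order ideal of monomials. Given such a vector, Billera and Lee produce a simplicial $n$-polytope realizing it by shelling the boundary complex of a cyclic $n$-polytope with a suitable number of vertices (determined by $g_{1}$) in the order dictated by that monomial order ideal and truncating the shelling at the appropriate stage: each shelling step contributes $1$ to a single entry of the $h$-vector, and since the cyclic polytope has the componentwise-largest $h$-vector there is enough room to hit the prescribed values, while the truncated shellable sphere is realized as the boundary of an honest polytope via a placing/pulling construction on the cyclic polytope. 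Dualizing returns a simple $n$-polytope with the desired $g$-vector, completing the proof.

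The main obstacle is the necessity half. Everything combinatorial there (Macaulay's inequality, the Dehn--Sommerville symmetry) and the Billera--Lee construction are comparatively routine once the machinery on Hilbert functions and shellings is in hand; the real weight is carried by the Hard Lefschetz theorem for singular projective toric varieties --- equivalently, for projective $\mathbb{Q}$-homology manifolds, and in full generality via intersection cohomology --- together with the rational-coefficient form of the Danilov--Jurkiewicz presentation, both of which are substantial inputs from Hodge theory rather than combinatorics. A self-contained alternative would be to replace this step with McMullen's polytope-algebra argument, which derives the same inequalities by purely convex-geometric means but is considerably more intricate; I would only attempt that route if an algebraic-geometry-free proof were essential.
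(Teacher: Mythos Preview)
Your sketch is essentially the standard Stanley/Billera--Lee proof and is correct in outline: necessity via Hard Lefschetz on the (rational) cohomology of the projective toric variety of a rational simplicial polytope, followed by Macaulay's bound on the quotient by an ample class, and sufficiency via the Billera--Lee shelling construction on cyclic polytopes.

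There is, however, nothing to compare against in the paper itself. The $g$-theorem appears in the paper purely as background: it is stated with a citation to \cite{Stanley1980,Billera1980} and is not proved or even sketched there. So your write-up goes well beyond what the paper does for this statement. If your goal is to match the paper, a one-line citation suffices; if your goal is an independent proof, your plan is the right one, with the usual caveat that the heavy lifting is the Hard Lefschetz theorem for simplicial projective toric varieties (which in the singular case needs either intersection cohomology or the rational-smoothness of simplicial toric varieties), and that a fully self-contained treatment of the Billera--Lee construction requires checking that the truncated shelling actually yields a polytopal sphere rather than merely a shellable one.
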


\subsection{Blow-ups}

In order to construct smooth projective toric varieties in certain
cobordism classes, it will be necessary to use an operation on toric
varieties that preserves smoothness and projectivity. One such operation
is the blow-up of a torus-equivariant subvariety.
\begin{defn}
(see \cite[Chapter 4, Section 6]{Griffiths1978} for details) Let
$D\subset\mathbb{C}^{n}$ be the unit disc. Let $V=\left\{ \left(z_{1},\ldots,z_{n}\right)\in D|z_{k+1}=\ldots=z_{n}=0\right\} .$
Consider $\mathbb{C}P^{n-k-1}$ with homogeneous coordinates $\left[L_{k+1}:\ldots:L_{n}\right]$.
Then the \emph{blow-up }of $D$ along the submanifold $V$ is
\[
\mbox{Bl}_{V}D=\left\{ \left(z,L\right)\in D\times\mathbb{C}P^{n-k-1}|z_{i}L_{j}=z_{j}L_{i}\ \mbox{for }i,j=k+1,\ldots,n\right\} .
\]

Given a complex manifold $M^{2n}$ with submanifold $V^{2k}$, the
\emph{blow-up }of $M$ along $V$ is found by choosing an open cover
for $V$ and applying the above construction locally on each open
subset (and patching the resulting blow-ups together along the overlaps).
\end{defn}
There is a map $\pi:\mbox{Bl}_{V}M\to M$ such that $\pi$ is an isomorphism
except in neighborhoods of points in $V$. The restriction $\pi|_{\pi^{-1}\left(V\right)}$
gives a fiber bundle over $V$ with fiber $\mathbb{C}P^{n-k-1}$.
As a special case, if $V$ is a point, then $\mbox{Bl}_{V}M$ is obtained
by replacing a neighborhood of this point in $M$ with $\mathbb{C}P^{n-1}$.

For toric varieties, blow-ups can also be understood in terms of associated
operations on fans and polytopes.
\begin{defn}
(cf. \cite[III.1]{Ewald1996}) Let $\tau$ be a cone in a fan $\Sigma$.
Then the \emph{star }of $\tau$ is $\mbox{st}\tau=\left\{ \sigma\in\Sigma|\tau\subset\sigma\right\} $.
The \emph{closed star }$\overline{\mbox{st}}\tau$ of $\tau$ is the
simplicial complex induced from the cones in $\mbox{st}\tau$. That
is, $\overline{\mbox{st}}\tau=\left\{ \varphi\in\Sigma|\varphi\subset\sigma\ \mbox{for some }\sigma\in\mbox{st}\tau\right\} $.
\begin{defn}
(cf. \cite[III.2 and V.6]{Ewald1996}) Let $\Sigma$ be a fan in $\mathbb{R}^{n}$,
and let $x\in\mathbb{R}^{n}$ be a vector that is contained in the
interior of a cone $\tau\in\Sigma$. The \emph{star subdivision }(also
called \emph{stellar subdivision}) of $\Sigma$ in the direction of
$x$ is the fan $\Sigma\backslash\mbox{st}\tau\cup x\cdot\left(\overline{\mbox{st}}\tau\backslash\mbox{st}\tau\right)$.
The product in this definition indicates the inclusion of all cones
of the form $\mbox{pos}\left(x,t_{1},\ldots,t_{j}\right)$, where
$\mbox{pos}\left(t_{1},\ldots,t_{j}\right)\in\overline{\mbox{st}}\tau\backslash\mbox{st}\tau$.
The star subdivision is called a \emph{regular star subdivision} relative
to $\tau$, which we will denote by $\mbox{Bl}_{\tau}\Sigma$, if
$x=t_{1}+\ldots+t_{k}$, where $t_{1},\ldots,t_{k}$ are the generating
rays of $\tau$.
\end{defn}
The star subdivision operation provides a way of obtaining new toric
varieties from old ones. In fact, regular star subdivisions preserve
several key properties of toric varieties. \end{defn}
\begin{prop}
(cf. \cite[III.6]{Ewald1996}) Let $\Sigma$ be a regular fan in $\mathbb{R}^{n}$
that is normal to a simple $n$-polytope. Let $\tau\in\Sigma$. Then
the regular star subdivision $\mbox{Bl}_{\tau}\Sigma$ is also a regular
fan in $\mathbb{R}^{n}$ that is normal to a simple $n$-polytope.
\end{prop}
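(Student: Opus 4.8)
The plan is to verify the two claimed properties of $\mathrm{Bl}_\tau\Sigma$ separately: regularity (which is essentially the statement that the star subdivision is a legitimate refinement into unimodular cones) and the existence of a simple polytope whose normal fan is $\mathrm{Bl}_\tau\Sigma$. First I would recall that since $\Sigma$ is regular, every maximal cone is unimodular; so if $\tau=\mathrm{pos}(t_1,\ldots,t_k)$ is any cone of $\Sigma$, the generators $t_1,\ldots,t_k$ extend to an integer basis in each maximal cone containing $\tau$. Setting $x=t_1+\cdots+t_k$, I would check directly that every new maximal cone produced by the star subdivision, namely those of the form $\mathrm{pos}(x,t_{i_1},\ldots,\widehat{t_{i_j}},\ldots,t_{i_k},s_{1},\ldots,s_{n-k})$ obtained from a maximal cone $\mathrm{pos}(t_1,\ldots,t_k,s_1,\ldots,s_{n-k})\supset\tau$ by deleting one generator $t_{i_j}$ of $\tau$ and inserting $x$, is again unimodular: the change-of-basis matrix from $\{t_1,\ldots,t_k,s_1,\ldots,s_{n-k}\}$ to the new generating set is obtained by replacing the column $t_{i_j}$ with $x=\sum t_\ell$, which has determinant $\pm1$. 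Cones not containing $\tau$ are untouched, so $\mathrm{Bl}_\tau\Sigma$ is regular, and it is complete because star subdivision does not change the support. This part is routine linear algebra once the combinatorics of $\overline{\mathrm{st}}\tau\setminus\mathrm{st}\tau$ is unwound.

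The substantive part is producing a simple polytope $P'$ realizing $\mathrm{Bl}_\tau\Sigma$ as its normal fan. Here I would use the correspondence between $\tau$ and the faces of the original simple polytope $P$ whose normal fan is $\Sigma$: the cone $\tau$ of dimension $k$ corresponds to a face $G$ of $P$ of codimension $k$, and I would realize the blow-up geometrically as truncating $P$ along $G$. Concretely, if $G=\{x\in P:\langle a_{i},x\rangle=b_{i},\ i=1,\ldots,k\}$ where the $a_i$ are the inward facet normals generating $\tau$, then for small $\varepsilon>0$ the polytope
\[
P'=\{x\in P:\langle a_1+\cdots+a_k,\,x\rangle\ge b_1+\cdots+b_k-\varepsilon\}
\]
is obtained by slicing off a neighborhood of $G$ with a new facet whose inward normal is exactly $x=a_1+\cdots+a_k$. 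I would then argue that for $\varepsilon$ sufficiently small this operation (i) keeps all the old facets (possibly shrinking them but not destroying any, since $G$ is a proper face), (ii) adds exactly one new facet with normal $x$, and (iii) preserves simplicity, because near the truncated region each old vertex of $G$ is replaced by simple vertices of the slice — this is the standard fact that truncating a face of a simple polytope yields a simple polytope. Finally I would check that the normal fan of $P'$ is precisely $\Sigma$ with the cones in $\mathrm{st}\,\tau$ replaced by the cones $\mathrm{pos}(x,t_{i_1},\ldots)$ described above, i.e. that it equals $\mathrm{Bl}_\tau\Sigma$; this follows by matching the facet-intersection data of $P'$ with the cone structure of the star subdivision.

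The main obstacle I anticipate is the uniformity of the choice of $\varepsilon$ and the verification that the combinatorial type of the truncated polytope is exactly what the star subdivision predicts — in particular, that no unintended facet disappears and that the new facet meets the old ones in the pattern dictated by $\overline{\mathrm{st}}\tau\setminus\mathrm{st}\tau$. One clean way to sidestep delicate $\varepsilon$-estimates is to cite the standard dictionary (e.g.\ \cite[III.2, V.6]{Ewald1996} together with the projective-toric-variety correspondence already recalled in the excerpt): a regular star subdivision of the normal fan of a simple polytope corresponds on the polytope side to a ``wedge'' or face-truncation, and face-truncations of simple polytopes are simple. If I want a self-contained argument, I would localize at a single vertex $v$ of $G$, where $P$ looks like a translate of the positive orthant cone, reduce the truncation to the model case $\{y\in\mathbb{R}^n_{\ge0}:y_1+\cdots+y_k\le\varepsilon\}$ glued into the orthant, observe this model is simple with one new facet, and then patch over the (finitely many) vertices of $G$. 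Either route closes the proof.
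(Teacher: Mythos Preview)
Your argument is sound and follows the standard route, but note that the paper does not actually prove this proposition: it is stated with a ``cf.\ \cite[III.6]{Ewald1996}'' and no proof, so there is nothing in the paper to compare your approach against beyond the citation itself. What you have written is essentially the textbook argument (regularity via the elementary column-replacement determinant computation, projectivity via face truncation of the polytope), and it would serve perfectly well as a self-contained proof in place of the bare citation.

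One small correction: in your displayed definition of $P'$, the sign on $\varepsilon$ is backwards. With inward normals $a_i$ and the facet inequalities written as $\langle a_i,x\rangle\ge b_i$, points of $P$ satisfy $\langle a_1+\cdots+a_k,\,x\rangle\ge b_1+\cdots+b_k$ with equality exactly on $G$; to shave off a neighborhood of $G$ you must \emph{strengthen} this to $\langle a_1+\cdots+a_k,\,x\rangle\ge b_1+\cdots+b_k+\varepsilon$, not $-\varepsilon$. As written, your extra inequality is redundant and $P'=P$. This is a slip rather than a gap --- the rest of the truncation argument (simplicity of $P'$, identification of its normal fan with $\mathrm{Bl}_\tau\Sigma$, and the local model at a vertex of $G$) goes through unchanged once the sign is fixed.
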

On the level of toric varieties, this proposition implies that if
we start with a smooth projective toric variety, then taking a regular
star subdivision of its associated fan produces a new fan which also
corresponds to a smooth projective toric variety. In other words,
smoothness and projectivity are preserved during regular star subdivisions.
In fact, regular star subdivisions correspond to certain blow-ups
of torus-equivariant subvarieties, which will justify the above notation.
\begin{prop}
(\cite[Section 1.7]{Oda1988}) Let $\Sigma$ be a complete regular
fan in $\mathbb{R}^{n}$ that is normal to an $n$-polytope. Let $X_{\Sigma}$
denote the associated smooth projective toric variety. For $\tau\in\Sigma$,
let $X_{\tau}$ denote the toric subvariety of $X_{\Sigma}$ that
is associated to the cone $\tau$. Then $X_{Bl_{\tau}\Sigma}=\mbox{\emph{Bl}}_{X_{\tau}}X_{\Sigma}$.
That is, the blow-up of $X_{\Sigma}$ along the subvariety $X_{\tau}$
is a smooth projective toric variety whose associated fan is the regular
star subdivision of $\Sigma$ relative to $\tau$.
\end{prop}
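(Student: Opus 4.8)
The plan is to verify the identification locally on $X_\Sigma$, exploiting that both operations are trivial away from $X_\tau$. The refinement $\mathrm{Bl}_\tau\Sigma\to\Sigma$ induces a proper birational toric morphism $\varphi\colon X_{\mathrm{Bl}_\tau\Sigma}\to X_\Sigma$ which is an isomorphism over $\bigcup_{\sigma\not\supseteq\tau}U_\sigma$, since $\mathrm{Bl}_\tau\Sigma$ and $\Sigma$ have exactly the same cones outside the closed star $\overline{\mathrm{st}}\,\tau$. Likewise the blow-up map $\pi\colon\mathrm{Bl}_{X_\tau}X_\Sigma\to X_\Sigma$ is an isomorphism over $X_\Sigma\setminus X_\tau$ and localizes on opens, i.e. $\pi^{-1}(U)=\mathrm{Bl}_{X_\tau\cap U}U$ for $U\subseteq X_\Sigma$ open. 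So it suffices to produce, for each maximal cone $\sigma\in\Sigma$ with $\tau\subseteq\sigma$, an isomorphism $\varphi^{-1}(U_\sigma)\cong\pi^{-1}(U_\sigma)$ over $U_\sigma$, compatibly on overlaps $U_\sigma\cap U_{\sigma'}=U_{\sigma\cap\sigma'}$; the pieces over cones not containing $\tau$ then glue on for free, and since $\varphi$ and $\pi$ are both the identity over the dense torus, the local isomorphisms patch to a global one over $X_\Sigma$.

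Fix such a $\sigma$ and set $k=\dim\tau$. Because $\Sigma$ is regular, a unimodular change of coordinates lets us assume $\sigma=\mathrm{pos}(e_1,\dots,e_n)$ for the standard basis and $\tau=\mathrm{pos}(e_1,\dots,e_k)$. Then $U_\sigma\cong\mathbb{C}^n$ with coordinates $x_1,\dots,x_n$ dual to $e_1,\dots,e_n$, and by the orbit--cone correspondence $X_\tau\cap U_\sigma$ is the coordinate subspace $\{x_1=\dots=x_k=0\}$ of codimension $k$; up to relabelling coordinates this is exactly the local model in the definition of the blow-up. On the fan side $x_\tau=e_1+\dots+e_k$, and subdividing $\sigma$ yields the $n$-dimensional cones $\mathrm{pos}(x_\tau,S)$ where $\mathrm{pos}(S)$ is a facet of $\sigma$ not containing $\tau$; such an $S$ must omit exactly one $e_j$ with $1\le j\le k$, so the new maximal cones are $\sigma_j=\mathrm{pos}\bigl(x_\tau,e_1,\dots,\widehat{e_j},\dots,e_n\bigr)$ for $j=1,\dots,k$, and the lower cones of $\overline{\mathrm{st}}\,\tau$ are their faces. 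Each $\sigma_j$ is again unimodular, since $x_\tau$ replaces $e_j$ and differs from $e_j$ by a sum of the remaining $e_i$, so $U_{\sigma_j}\cong\mathbb{C}^n$; computing the dual basis of $\sigma_j$ gives as coordinates $x_j$, the quotients $x_j^{-1}x_i$ for $i\in\{1,\dots,k\}\setminus\{j\}$, and $x_i$ for $i>k$.

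Now compare with $\mathrm{Bl}_{X_\tau\cap U_\sigma}U_\sigma\subseteq\mathbb{C}^n\times\mathbb{C}P^{k-1}$, with homogeneous coordinates $[L_1:\dots:L_k]$ on the second factor. Its standard chart $\{L_j\ne0\}$ is isomorphic to $\mathbb{C}^n$ with coordinates $x_j$, $\ell_i=L_i/L_j=x_i/x_j$ for $i\in\{1,\dots,k\}\setminus\{j\}$, and $x_i$ for $i>k$, with the map to $U_\sigma=\mathbb{C}^n$ recovering $x_i=\ell_i x_j$ for $i\le k$. This is the same chart, with the same projection to $U_\sigma$, as the affine toric chart $U_{\sigma_j}$ found above; moreover the transition maps on $\{L_j\ne0\}\cap\{L_{j'}\ne0\}$ agree with the gluing of $U_{\sigma_j}$ and $U_{\sigma_{j'}}$ along $U_{\sigma_j\cap\sigma_{j'}}$. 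Hence $\varphi^{-1}(U_\sigma)\cong\pi^{-1}(U_\sigma)$ over $U_\sigma$, and by the first paragraph these glue to the global isomorphism $X_{\mathrm{Bl}_\tau\Sigma}\cong\mathrm{Bl}_{X_\tau}X_\Sigma$ over $X_\Sigma$; smoothness and projectivity of the target were already established in the preceding proposition.

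The real work — and the only genuine computation — is the explicit matching in the third step of the $k$ blow-up charts with the $k$ maximal cones $\sigma_j$ of the star subdivision, together with their gluings. Everything else is formal once one observes that both constructions are local on $X_\Sigma$ and are isomorphisms away from $X_\tau$, so the whole question reduces to a single affine chart $\mathbb{C}^n$.
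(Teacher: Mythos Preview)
Your proof is correct and is the standard argument for this fact; however, the paper does not actually prove this proposition. It is stated with a citation to \cite[Section 1.7]{Oda1988} and no proof is given---the paper treats it as a known background result. So there is nothing in the paper to compare your argument against. For what it is worth, your approach (reduce to a single smooth affine chart via unimodularity, then match the $k$ maximal cones of the star subdivision with the $k$ standard affine charts of the blow-up of $\mathbb{C}^n$ along a coordinate subspace) is exactly the classical one found in the cited reference and in other standard sources such as Cox--Little--Schenck.
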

It is also useful to study blow-ups of smooth projective toric varieties
from the perspective of polytopes. On the level of polytopes, a star
subdivision of a cone $\tau\in\Sigma$ corresponds to a truncation
of the face of the polytope that is associated to $\tau$. This allows
us to compute the change in $g$-vector during blow-ups.

In order to facilitate these computations, it is useful to consider
an alternate interpretation of the $h$-vector. Consider an $n$-polytope
$P$, and choose a vector $\nu$ that is not perpendicular to any
edge of $P$. Then $\nu$ gives a height function on $P$ which in
turn determines a directed graph on the edges and vertices of $P$.
The \emph{index }of a vertex of $P$ relative to $\nu$ is defined
to be the number of edges in this directed graph that point towards
the vertex.
\begin{lem}
\label{hvector}(see \cite[Section 1.2]{Buchstaber2002} for details)
Given $P$ and $\nu$ as described above, the number of vertices of
$P$ with index $q$ is equal to $h_{n-q}$. This is independent of
the choice of $\nu$.
\end{lem}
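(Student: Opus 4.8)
\textit{Proof proposal.} The plan is to count the faces of $P$ according to their ``top'' vertex with respect to the height function $\nu$, and then match the resulting generating function against the defining relation of the $h$-vector. Here, for a face $F$ of $P$, the \emph{top vertex} of $F$ is the vertex of $F$ at which $\nu|_F$ is maximal; this is well defined because $\nu$ is not perpendicular to any edge of $P$, hence $\nu|_F$ is injective on the vertices of $F$ and attains a strict maximum at a unique vertex. Observe that the index of a vertex $v$ (as defined just before the lemma) is the number of edges of $P$ at $v$ along which $\nu$ decreases as one moves away from $v$; call these the descending edges at $v$.

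First I would record the local picture at a vertex. Since $P$ is simple, exactly $n$ edges $e_1,\dots,e_n$ emanate from a given vertex $v$, and every face of $P$ containing $v$ is determined near $v$ by a unique subset of these edges: for each subset $S\subseteq\{e_1,\dots,e_n\}$ with $|S|=k$ there is exactly one $k$-dimensional face of $P$ containing $v$ whose edge set at $v$ is precisely $S$. This is standard for simple polytopes (the vertex figure at $v$ is an $(n-1)$-simplex, whose faces index the faces of $P$ through $v$), and I would cite the references on polytopes listed in the excerpt rather than reprove it. Next, by the simplex-method observation, a vertex $v$ of a polytope $F$ is the top vertex of $F$ if and only if every edge of $F$ at $v$ descends from $v$ (if some edge ascended, $\nu|_F$ could be strictly increased, so $v$ would not be maximal; conversely, a local max of a linear functional on a polytope is a global max). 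Combining these two facts, the number of $k$-faces of $P$ whose top vertex is $v$ equals the number of $k$-subsets of the descending edges at $v$, namely $\binom{\mathrm{ind}(v)}{k}$.

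Since every face of $P$ has exactly one top vertex, summing over all vertices $v$ gives that the number of $j$-dimensional faces of $P$ equals $\sum_{v}\binom{\mathrm{ind}(v)}{j}$. Writing $a_q$ for the number of vertices of index $q$, this reads $\#\{j\text{-faces}\}=\sum_{q}a_q\binom{q}{j}$. I would then form the polynomial $\sum_{j=0}^{n}\#\{j\text{-faces}\}\,(t-1)^{j}=\sum_{q}a_q\sum_{j}\binom{q}{j}(t-1)^{j}=\sum_{q}a_q\big((t-1)+1\big)^{q}=\sum_{q}a_q\,t^{q}$. Translating the indexing conventions of the excerpt (so that $f_{k-1}$ counts the faces of dimension $n-k$), the left-hand side is exactly $\sum_{k=0}^{n}f_{k-1}(t-1)^{n-k}=\sum_{k=0}^{n}h_k\,t^{n-k}=\sum_{q}h_{n-q}\,t^{q}$. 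Comparing coefficients of $t^{q}$ yields $a_q=h_{n-q}$, which is the assertion. Finally, the right-hand side $\sum_{k=0}^{n}f_{k-1}(t-1)^{n-k}$ depends only on the $f$-vector of $P$ and not on $\nu$, so the count $a_q$ is independent of the choice of $\nu$.

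The main obstacle is the local/combinatorial input at a vertex: cleanly establishing that the $k$-faces of $P$ through $v$ correspond bijectively to the $k$-subsets of the $n$ edges at $v$, and that being the top vertex of a face is equivalent to having all of that face's edges at $v$ descending. Once these structural facts about simple polytopes and generic linear functionals are in place, the rest is the short binomial generating-function identity above. (One could also sidestep any worry about the orientation convention for $\nu$, since replacing $\nu$ by $-\nu$ sends $\mathrm{ind}(v)$ to $n-\mathrm{ind}(v)$ and, by the Dehn--Sommerville symmetry $h_k=h_{n-k}$, leaves the identity $a_q=h_{n-q}$ unchanged.)
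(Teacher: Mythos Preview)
Your argument is correct and is precisely the standard proof of this fact: assign to each face its $\nu$-maximal vertex, use simplicity to see that the $k$-faces with top vertex $v$ are in bijection with $k$-subsets of the descending edges at $v$, and then match the resulting binomial generating function against the defining relation of the $h$-vector. The indexing check against the paper's conventions is handled correctly, and your closing remark about the orientation of $\nu$ and Dehn--Sommerville symmetry neatly disposes of any ambiguity in how the directed graph is set up.

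The paper itself does not supply a proof of this lemma; it is stated with a reference to \cite[Section~1.2]{Buchstaber2002}, and the argument given there is exactly the one you have reproduced. So there is nothing to compare: your proposal \emph{is} the referenced proof.
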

This lemma allows us to compute changes of $g$-vectors during truncations
of certain faces.
\begin{prop}
\label{truncatevertex}Let $P$ be a smooth polytope of dimension
three or greater with $g$-vector $\left(1,g_{1},\ldots,g_{\left\lfloor n/2\right\rfloor }\right)$.
Truncating a vertex of $P$ (which corresponds to a star subdivision
of a maximal cone in the normal fan) produces a new polytope with
$g$-vector $\left(1,g_{1}+1,g_{2},\ldots,g_{\left\lfloor n/2\right\rfloor }\right)$. \end{prop}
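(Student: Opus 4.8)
The plan is to track how the $h$-vector changes under the truncation using the height-function (index) description of $h$ given in Lemma~\ref{hvector}, and then to convert the answer into $g$-vectors via $g_k=h_k-h_{k-1}$. First I would set the truncation up concretely. Let $v$ be the truncated vertex and let $d_1,\ldots,d_n$ be the direction vectors of the $n$ edges of $P$ emanating from $v$; these are linearly independent since $P$ is simple. For small $\epsilon>0$ the points $w_k=v+\epsilon d_k$ are affinely independent and span a hyperplane $H_\epsilon$ that strictly separates $v$ from every other vertex of $P$, and the truncated polytope is $P':=P\cap H_\epsilon^{-}$, with $H_\epsilon^{-}$ the closed half-space not containing $v$; this $P'$ is again a simple $n$-polytope, so Lemma~\ref{hvector} applies to it as well. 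The vertices of $P'$ are the vertices of $P$ other than $v$ together with $w_1,\ldots,w_n$, the latter spanning an $(n-1)$-simplex facet, and its edges are: the edges of $P$ not meeting $v$ (unchanged); the $n$ shortened edges $\{u_k,w_k\}$, each contained in an original edge $\{u_k,v\}$ of $P$ at $v$; and the $\binom{n}{2}$ simplex edges $\{w_i,w_j\}$, which satisfy $w_j-w_i=\epsilon(d_j-d_i)$.

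Next I would fix a height vector $\nu$ admissible for both $P$ and $P'$, meaning perpendicular to no edge of either; such $\nu$ exists because only finitely many hyperplanes must be avoided, and by Lemma~\ref{hvector} the same $\nu$ computes $h(P)$ and $h(P')$. Because the shortened edges keep the directions of the original edges, every vertex of $P$ other than $v$ has the same index in $P'$ as in $P$. The heart of the proof is the local index computation at the new vertices: order the $d_k$ so that $\langle\nu,d_1\rangle<\cdots<\langle\nu,d_n\rangle$ and let $i_0$ be the index of $v$, so that exactly $\langle\nu,d_1\rangle,\ldots,\langle\nu,d_{i_0}\rangle$ are negative. A sign check on the two types of edge at $w_k$ shows that the shortened edge $\{u_k,w_k\}$ has $w_k$ as its higher endpoint precisely when $k\le i_0$, while among the simplex edges $\{w_k,w_j\}$ this occurs precisely for $j<k$; hence the index of $w_k$ equals $(k-1)+[\,k\le i_0\,]$. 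Letting $k$ range over $1,\ldots,n$, the multiset of indices of $w_1,\ldots,w_n$ is $\{1,2,\ldots,n-1\}$ together with one extra copy of $i_0$. Since passing from $P$ to $P'$ deletes the unique index-$i_0$ vertex $v$ and inserts the $w_k$, the number of index-$q$ vertices increases by $1$ for every $q\in\{1,\ldots,n-1\}$ and is unchanged for $q=0$ and $q=n$---the extra copy of $i_0$ exactly offsetting the deletion of $v$. By Lemma~\ref{hvector} this says $h_j(P')=h_j(P)+1$ for $1\le j\le n-1$, while $h_0(P')=h_0(P)=1$ and $h_n(P')=h_n(P)=1$.

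Finally I would read off the $g$-vector: $g_0=1$ for both polytopes; $g_1(P')=h_1(P')-h_0(P')=g_1(P)+1$; and for $2\le k\le\lfloor n/2\rfloor$, a range in which $n\ge 3$ forces $1\le k-1$ and $k\le n-1$, both $h_{k-1}$ and $h_k$ increase by $1$, so that $g_k(P')=g_k(P)$. This is precisely the asserted $g$-vector $(1,g_1+1,g_2,\ldots,g_{\lfloor n/2\rfloor})$. I expect the only genuinely delicate step to be the local index computation for the $w_k$: keeping the sign conventions for the shortened edges and for the simplex edges straight, and verifying the cancellation that makes the net change in the $h$-vector independent both of which vertex of $P$ is truncated and of the auxiliary vector $\nu$. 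Only simplicity of $P$ enters this purely combinatorial computation; smoothness of $P$ is what identifies the truncation with the regular star subdivision, equivalently the blow-up, referred to in the statement.
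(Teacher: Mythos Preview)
Your proof is correct and follows the same overall strategy as the paper: both track the change in the $h$-vector using the height-function/index description of Lemma~\ref{hvector} and then convert to $g$-vectors. The paper, however, takes a shortcut you did not exploit: it chooses $\nu$ so that the truncated vertex $v$ is the \emph{source} of the directed graph, making the deletion of $v$ trivial to account for, and then simply adds the $h$-vector $(1,1,\ldots,1)$ of the new $\Delta^{n-1}$ facet to obtain $(h_0,h_1+1,\ldots,h_{n-1}+1,h_n)$ in one line. Your version instead allows an arbitrary admissible $\nu$, carries out the explicit local index computation at the new vertices $w_k$, and verifies the cancellation (the extra copy of $i_0$ offsetting the deleted vertex) that makes the result independent of both $\nu$ and of which vertex is truncated. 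This is more work, but it is arguably more transparent: the paper's shortcut leans on the freedom to choose $\nu$ guaranteed by Lemma~\ref{hvector}, whereas your argument shows directly why the outcome is canonical. Your closing remark that only simplicity is used in the combinatorial computation, with smoothness entering only to identify the truncation with a regular star subdivision, is also apt and goes slightly beyond what the paper makes explicit.
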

\begin{proof}
Truncating a vertex of an $n$-polytope $P$ is accomplished by replacing
the vertex with the simplex $\Delta^{n-1}$, where we attach each
edge that met at the removed vertex to the $n$-many vertices of $\Delta^{n-1}$.
If we choose the removed vertex as the source of the graph described
in Lemma \ref{hvector}, then removing it decreases $h_{0}$ by one.
We then must add the $h$-vector $h\left(\Delta^{n-1}\right)=\left(1,\stackrel{\left(n\right)}{\ldots},1\right)$
to account for the newly included facet. If $\left(h_{0},\ldots,h_{n}\right)$
is the $h$-vector of $P$, then the $h$-vector of the truncated
polytope is $\left(h_{0},h_{1}+1,h_{2}+1,\ldots,h_{n-1}+1,h_{n}\right)$.
Thus the $g$-vector changes from $g\left(P\right)=\left(1,g_{1},\ldots,g_{\left\lfloor n/2\right\rfloor }\right)$
to $\left(1,g_{1}+1,g_{2},\ldots,g_{\left\lfloor n/2\right\rfloor }\right)$.\end{proof}
\begin{prop}
\label{truncateedge}Let $P$ be a smooth polytope of dimension four
or greater with $g$-vector $\left(1,g_{1},\ldots,g_{\left\lfloor n/2\right\rfloor }\right)$.
Truncating an edge of $P$ produces a new polytope with $g$-vector
\[
\left(1,g_{1}+1,g_{2}+1,g_{3},\ldots,g_{\left\lfloor n/2\right\rfloor }\right).
\]
\end{prop}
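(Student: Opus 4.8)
The plan is to mimic the argument of Proposition \ref{truncatevertex}, using the index-counting interpretation of the $h$-vector from Lemma \ref{hvector}, but now applied to the truncation of an edge rather than a vertex. Truncating an edge $e$ of a smooth $n$-polytope $P$ replaces $e$ with a new facet combinatorially isomorphic to $\Delta^{1}\times\Delta^{n-2}$: each of the two endpoints of $e$ contributes a copy of the vertex figure of $e$ (an $(n-2)$-simplex, by smoothness), and these two $(n-2)$-simplices are joined by the $\Delta^{1}$ direction coming from $e$ itself. So the first step is to record $f(\Delta^1\times\Delta^{n-2})$, or rather its $h$-vector, which by the standard product formula is $h(\Delta^1\times\Delta^{n-2})_k = h(\Delta^1)_i\cdot h(\Delta^{n-2})_j$ summed over $i+j=k$; since $h(\Delta^1)=(1,1)$ and $h(\Delta^{n-2})=(1,1,\dots,1)$ ($n-1$ ones), we get $h(\Delta^1\times\Delta^{n-2})=(1,2,2,\dots,2,1)$ with the $2$'s in positions $1$ through $n-2$.

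Next I would choose the height vector $\nu$ of Lemma \ref{hvector} adapted to the truncation. Pick $\nu$ so that the edge $e$ is the lowest edge of $P$: both endpoints of $e$ lie below every other vertex, with one endpoint $u$ strictly the global minimum and the other endpoint $w$ the second-lowest vertex, so that $u$ has index $0$ and $w$ has index $1$ (its only incoming edge is $e$). Every other vertex of $P$ keeps its index unchanged when we pass to the truncated polytope $P'$, because truncating $e$ only alters the part of the edge-graph near $u$ and $w$. Removing the vertices $u$ and $w$ therefore subtracts $1$ from $h_0$ and $1$ from $h_1$. We then add in the contribution of the new facet $Q\cong\Delta^1\times\Delta^{n-2}$: with the induced height function $Q$ sits at the bottom, its unique index-$0$ vertex replacing the role of the old $u$, and its $h$-vector $(1,2,2,\dots,2,1)$ gets added. (The one subtlety is orientation/consistency of $\nu$ on $Q$, which is handled exactly as in the proof of Proposition \ref{truncatevertex} — one checks the relevant local cone structure is that of the standard prism, using smoothness of $P$ at the endpoints of $e$.)

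Combining: if $h(P)=(h_0,h_1,\dots,h_n)$ then
\[
h(P') = (h_0,h_1,h_2,\dots,h_n) - (1,1,0,\dots,0) + (1,2,2,\dots,2,1) = (h_0, h_1+1, h_2+2, h_3+2,\dots,h_{n-1}+2, h_n+1).
\]
Hold on — I should double-check this against the Dehn–Sommerville symmetry $h_k=h_{n-k}$, which $h(P')$ must satisfy: the correction vector is $(-1,1,0,\dots,0)+(1,2,2,\dots,2,1)=(0,3,2,2,\dots,2,3,1)$, which is not symmetric, so the bookkeeping above is not yet right. The fix is to be more careful: removing an edge from the height graph is not the same as removing two independent vertices, and the new facet $Q$ is glued along two opposite $(n-2)$-simplices, not added freely. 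The clean way is to compute directly: $f_{k-1}(P') = f_{k-1}(P) - (\text{faces destroyed}) + (\text{faces created})$, where the faces of $P$ containing $e$ are destroyed and replaced by faces of the prism $Q$ together with the faces that $Q$ shares with $P'$; then convert to $h$ via the defining relation $\sum h_k t^{n-k}=\sum f_{k-1}(t-1)^{n-k}$. I expect this $f$-to-$h$ conversion, done symbolically, to yield a symmetric $h(P')$ whose associated $g$-vector is exactly $(1,g_1+1,g_2+1,g_3,\dots,g_{\lfloor n/2\rfloor})$, matching the claim; the reason the change stops at $g_2$ is that the prism's "extra" combinatorial content beyond a simplex is concentrated in the middle of the $h$-vector and, after the symmetric cancellation, only bumps $h_1$ and $h_2$ (equivalently $g_1$ and $g_2$) relative to $P$. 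The main obstacle, then, is getting this face-count bookkeeping exactly right — correctly identifying which faces of $P$ are destroyed (those whose closure meets $e$ in positive dimension) and which faces of the prism $Q$ are genuinely new versus identified with truncated faces of $P$ — rather than anything conceptually deep; the index/height argument of Lemma \ref{hvector} is best used as a consistency check on the final answer rather than as the primary computation, given the edge-gluing subtlety.
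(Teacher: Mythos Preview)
Your approach is exactly the one the paper uses: place the edge at an extreme of the height function, delete its two endpoints, and add in the $h$-vector of the new prism facet $\Delta^{1}\times\Delta^{n-2}$. The gap is not conceptual but an alignment slip that you then overreact to.

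The prism $\Delta^{1}\times\Delta^{n-2}$ is $(n-1)$-dimensional, so its $h$-vector $(1,2,2,\dots,2,1)$ has exactly $n$ entries, indexed $0$ through $n-1$; you stated this correctly. When you feed this into the index count for the $n$-dimensional polytope $P'$, those $n$ entries land in positions $0$ through $n-1$ of $h(P')$, and position $n$ receives nothing. Concretely,
\[
-(1,1,0,\dots,0,0)+(1,2,2,\dots,2,1,0)=(0,1,2,\dots,2,1,0),
\]
where both vectors now have $n+1$ entries. This is symmetric, and gives
\[
h(P')=(h_0,\,h_1+1,\,h_2+2,\dots,h_{n-2}+2,\,h_{n-1}+1,\,h_n),
\]
which is precisely the paper's formula and yields $g(P')=(1,g_1+1,g_2+1,g_3,\dots,g_{\lfloor n/2\rfloor})$. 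Your displayed line instead produced $h_{n-1}+2$ and $h_n+1$, which amounts to treating the prism's $h$-vector as if it had $n+1$ entries (one too many $2$'s), shifting the trailing $1$ into position $n$.

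So the Dehn--Sommerville check did its job: it flagged a miscount, not a flaw in the method. There is no need to fall back on a raw $f$-vector inclusion--exclusion; the index argument from Lemma~\ref{hvector} goes through cleanly once the prism's $h$-vector is aligned correctly.
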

\begin{proof}
The proof is similar to that of Proposition \ref{truncatevertex}.
In this situation, the truncation of an $n$-polytope $P$ is achieved
by replacing the edge, including the vertices at each end, with a
new facet $\Delta^{n-2}\times I$, where $I$ is the unit interval.
We can compute the $h$-vector of this new facet to be $h\left(\Delta^{n-2}\times I\right)=\left(1,2,\stackrel{\left(n-2\right)}{\ldots},2,1\right)$.
If $h\left(P\right)=\left(h_{0},\ldots,h_{n}\right)$, then the $h$-vector
of the truncated polytope must be $\left(h_{0},h_{1}+1,h_{2}+2,\ldots,h_{n-2}+2,h_{n-1}+1,h_{n}\right)$.
If $g\left(P\right)=\left(1,g_{1},\ldots,g_{\left\lfloor n/2\right\rfloor }\right)$,
then the new $g$-vector is $\left(1,g_{1}+1,g_{2}+1,g_{3},\ldots,g_{\left\lfloor n/2\right\rfloor }\right)$.
\end{proof}
The change in $g$-vector during a truncation only depends on the
faces that are contained in the face being truncated. On the level
of fans, this means that the change in $g$-vector during a star subdivision
of the normal fan of a polytope only depends on the closed star $\overline{\mbox{st}}\tau$
of the cone $\tau$ that is being subdivided. Since blow-ups only
change a manifold locally, it is not surprising that the overall change
in complex cobordism during a blow-up only depends on this closed
star.
\begin{prop}
\label{Ustinovsky}Let $\Sigma$ be a regular fan in $\mathbb{R}^{n}$
that is normal to a simple $n$-polytope. Let $\tau\in\Sigma$. The
change in complex cobordism when blowing up the toric variety $X_{\Sigma}$
along the subvariety $X_{\tau}$ only depends on $\overline{\mbox{st}}\tau$.\end{prop}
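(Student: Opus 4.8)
The plan is to combine two facts: a blow-up modifies a manifold only inside a tubular neighbourhood of its centre, so the induced change in complex cobordism depends only on the centre together with its normal bundle; and for the torus-invariant centre $X_{\tau}\subset X_{\Sigma}$ this data is itself encoded in the subfan $\overline{\mbox{st}}\,\tau$. Putting these together gives the Proposition.

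First I would make the locality precise. Set $V:=X_{\tau}$ and $M:=X_{\Sigma}$, let $\nu$ be the normal bundle of $V$ in $M$ (complex rank $\dim\tau$), and let $\pi\colon \mbox{Bl}_{V}M\to M$ be the blow-down, with exceptional divisor $E=\mathbb{P}(\nu)$ and inclusion $j\colon E\hookrightarrow\mbox{Bl}_{V}M$. Since $\pi$ restricts to an isomorphism over $M\setminus V$, the class $c(T\,\mbox{Bl}_{V}M)-\pi^{*}c(TM)$ is supported on $E$ and, by the usual blow-up exact sequences, equals $j_{*}\beta$ for a class $\beta\in H^{*}(E)$ which is a universal polynomial in $c_{1}(\mathcal{O}_{E}(1))$ and in the pullbacks to $E=\mathbb{P}(\nu)$ of the Chern classes of $\nu$ and of $TV$. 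Pairing against the fundamental class and using $\pi_{*}\mu_{\mbox{Bl}_{V}M}=\mu_{M}$ together with the projection formula, the contribution of $\pi^{*}c(TM)$ to each Chern number $c_{I}[\mbox{Bl}_{V}M]$ is exactly $c_{I}[M]$, while the contribution of $j_{*}\beta$ is a characteristic number of $E=\mathbb{P}(\nu)$, hence a function of the pair $(V,\nu)$ (and since $c(TM)|_{V}=c(TV)\,c(\nu)$, even the $TV$-dependence of $\beta$ is subsumed in $(V,\nu)$). Intuitively this just records that the blow-up replaces the disc bundle $D\nu\subset M$ by its blow-up along the zero section and leaves $M\setminus\mbox{int}\,D\nu$ untouched, so a cut-and-paste argument already shows $[\mbox{Bl}_{V}M]-[M]$ depends on nothing beyond the isomorphism class of $(X_{\tau},\nu_{X_{\tau}/X_{\Sigma}})$.

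Next I would read the pair $(X_{\tau},\nu_{X_{\tau}/X_{\Sigma}})$ off the fan. The orbit closure $X_{\tau}$ is the toric variety of the quotient (``star'') fan $\mbox{Star}(\tau)$ in $\bigl(\mathbb{Z}^{n}/(\mathbb{Z}^{n}\cap\mbox{span}\,\tau)\bigr)\otimes\mathbb{R}$, whose cones are the images of the cones of $\Sigma$ containing $\tau$; and $\nu_{X_{\tau}/X_{\Sigma}}$ is the $T$-equivariant bundle which splits as a direct sum of toric line bundles indexed by the generating rays of $\tau$, each summand prescribed by how the cones of $\Sigma$ through $\tau$ sit relative to $\tau$ inside $\mathbb{Z}^{n}$. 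Every such ingredient is exactly what the subfan $\overline{\mbox{st}}\,\tau$ records. Consequently, if $\tau\in\Sigma$ and $\tau'\in\Sigma'$ have unimodularly equivalent closed stars, then $(X_{\tau},\nu_{X_{\tau}/X_{\Sigma}})\cong(X_{\tau'},\nu_{X_{\tau'}/X_{\Sigma'}})$, and by the previous paragraph the two blow-ups effect the same change in complex cobordism; this is the assertion.

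The step demanding the most care is the identification in the third paragraph: one must fix the precise sense in which ``depends only on $\overline{\mbox{st}}\,\tau$'' is meant — in particular that it is the lattice data of the closed star, not merely its combinatorial type, that is relevant — and verify that an isomorphism of closed stars compatible with the ambient lattice genuinely induces an isomorphism of the pairs $(X_{\tau},\nu)$ (via the star-fan description of $X_{\tau}$ and the toric-line-bundle description of its normal bundle). The locality step, by contrast, is essentially formal once one grants that the blow-up is supported in a tubular neighbourhood of the centre.
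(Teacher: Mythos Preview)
Your argument is correct and takes a genuinely different route from the paper's. The paper's proof is essentially a one-line citation: it invokes Ustinovsky's explicit formula expressing $[\mbox{Bl}_{X_{\tau}}X_{\Sigma}]-[X_{\Sigma}]$ as the cobordism class of a quasitoric manifold built over a multifan that is visibly assembled from the closed star of $\tau$, and then observes that nothing outside $\overline{\mbox{st}}\,\tau$ enters that construction. You instead give a direct, self-contained geometric argument: first the standard locality statement that $[\mbox{Bl}_{V}M]-[M]$ depends only on the pair $(V,\nu_{V/M})$ (via either the Chern-class blow-up formula or the tubular-neighbourhood cut-and-paste you sketch), and then the toric identification of $(X_{\tau},\nu_{X_{\tau}/X_{\Sigma}})$ from the closed-star data, which amounts to noting that $X_{\tau}$ already sits as a closed subvariety of the open toric piece $X_{\overline{\mbox{st}}\,\tau}\subset X_{\Sigma}$. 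Your approach avoids the machinery of quasitoric manifolds and multifans entirely and is more elementary; on the other hand, Ustinovsky's formula gives strictly more, namely an explicit cobordism-class representative for the difference, which the paper later exploits implicitly (e.g.\ in Lemma~\ref{blowup3}) by computing one example and transporting the answer. Your caveat at the end is well placed: the statement should indeed be read as ``up to unimodular equivalence of the closed star as a lattice fan,'' not merely combinatorial equivalence, and this is exactly how the paper uses it downstream.
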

\begin{proof}
In \cite{Ustinovsky2011}, Ustinovsky provided an explicit formula
for the change in complex cobordism upon blowing up a smooth toric
variety along a subvariety. In particular, he showed that the resulting
cobordism class is obtained by adding the cobordism class of a certain
quasitoric manifold constructed over a multifan which only depends
on characteristics of the closed star of the subdivided cone (see
\cite{Buchstaber2002} for details about quasitoric manifolds and
\cite{Hattori2003} for more on multifans). Thus the change in complex
cobordism is completely independent of any cone that is not contained
in this closed star.
\end{proof}

\section{Combinatorial Obstructions}

Any complex smooth projective variety is also a Kähler manifold. Thus
any such variety $X$ has a Hodge structure, a decomposition
\[
H^{r}\left(X;\mathbb{C}\right)\cong\bigoplus_{p+q=r}H^{p,q}\left(X\right)
\]
of its complex cohomology groups (see \cite[Chapter 0 Section 7]{Griffiths1978}
for details). The Hodge numbers $h^{p,q}=h^{p,q}\left(X\right)$ of
such a variety are the dimensions of the groups $H^{p,q}\left(X\right)$.
There is a convenient way of describing the Hodge numbers of a variety
which will allow us to relate them to the complex cobordism of the
variety.
\begin{defn}
(\cite[Section 15]{Hirzebruch1966}) \label{chiy}Let $X$ be a complex
smooth projective variety of complex dimension $n$. The $\chi_{y}$-genus
of $X$ is defined to be the degree $n$ polynomial
\[
\chi_{y}\left[X\right]=\sum_{p=0}^{n}\chi^{p}\left(X\right)\cdot y^{p},
\]
where $\chi^{p}\left(X\right)=\sum\limits _{q=0}^{n}\left(-1\right)^{q}h^{p,q}\left(X\right)$.
\end{defn}
In the 1950's, Hirzebruch demonstrated that the Hodge numbers provide
information about a certain complex cobordism invariant called the
\emph{generalized Todd genus}. This is the genus corresponding to
the formal power series $Q\left(y,x\right)=\dfrac{x\left(y+1\right)}{1-e^{-x\left(y+1\right)}}-yx$,
where $y$ is an indeterminate \cite[Section 1.8]{Hirzebruch1966}.
We must examine this construction in more detail.

Consider a stably complex manifold $M^{2n}$, and formally write its
Chern class with rational coefficients as $c\left(M\right)=\prod\limits _{k=1}^{n}\left(1+x_{k}\right)$.
Then the symmetric function $\prod\limits _{k=1}^{n}Q\left(y,x_{k}\right)$
can be written in terms of these Chern classes:
\begin{equation}
\prod_{k=1}^{n}Q\left(y,x_{k}\right)=\sum_{n=0}^{\infty}T_{n}\left(y,c_{1}\left(M\right),\ldots,c_{n}\left(M\right)\right)\in H^{*}\left(M,\mathbb{Q}\right)\left[y\right],\label{eq:genT}
\end{equation}
where each $T_{n}$ is a homogeneous polynomial in the $x_{k}$ of
degree $n$. Each of these polynomials can in turn be written in the
form
\begin{equation}
T_{n}\left(y,c_{1}\left(M\right),\ldots,c_{n}\left(M\right)\right)=\sum_{p=0}^{n}T_{n}^{p}\left(M\right)y^{p},\label{eq:genTy}
\end{equation}
where each $T_{n}^{p}\left(M\right)$ is a cohomology class of degree
$2n$ expressed in terms of Chern classes. The generalized Todd genus
$T\left[M\right]$ of the manifold $M$ is the polynomial in $y$
found by evaluating each $T_{n}^{p}\left(M\right)$ on the fundamental
class of $M$:
\[
T\left[M\right]=\sum_{p=0}^{n}T_{n}^{p}\left[M\right]y^{p},
\]
where $T_{n}^{p}\left[M\right]=\left\langle T_{n}^{p}\left(M\right),\mu_{M}\right\rangle $.

Hirzebruch proved that the $\chi_{y}$-genus of a variety and its
generalized Todd genus hold the same information.
\begin{thm}
(Hirzebruch-Riemann-Roch Theorem, \cite[Section 20]{Hirzebruch1966})
If $M^{2n}$ is a complex smooth projective variety, then $\chi^{p}\left(M\right)=T_{n}^{p}\left[M\right]$
for all $p$. In other words, $\chi_{y}\left[M\right]=T\left[M\right]$.
\end{thm}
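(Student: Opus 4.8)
The plan is to reduce the identity $\chi_y[M]=T[M]$ to two standard ingredients---the Dolbeault description of the Hodge numbers and the Hirzebruch--Riemann--Roch theorem for \emph{coherent sheaves}---and then to recognize the resulting characteristic-class expression as the generalized Todd genus through an algebraic manipulation of power series. First I would use that a smooth projective variety $M$ of complex dimension $n$ is a compact K\"ahler manifold, so the Dolbeault isomorphism identifies the Hodge component $H^{p,q}(M)$ with the sheaf cohomology $H^q\big(M,\Omega^p_M\big)$, where $\Omega^p_M$ is the sheaf of holomorphic $p$-forms. Hence
\[
\chi^p(M)=\sum_{q=0}^n(-1)^q h^{p,q}(M)=\sum_{q=0}^n(-1)^q\dim_{\mathbb C}H^q\big(M,\Omega^p_M\big)=\chi\big(M,\Omega^p_M\big),
\]
the holomorphic Euler characteristic of the locally free sheaf $\Omega^p_M$. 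Applying Hirzebruch--Riemann--Roch to each $\Omega^p_M$ gives $\chi^p(M)=\big\langle \mathrm{ch}\big(\Omega^p_M\big)\cdot\mathrm{Td}(M),\,\mu_M\big\rangle$.

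Next I would package these identities into a single generating polynomial. Writing $\Lambda_y E=\sum_{k\ge 0}\Lambda^k E\,y^k$, multiplying by $y^p$ and summing over $p$ yields
\[
\chi_y[M]=\sum_{p=0}^n\chi^p(M)\,y^p=\big\langle \mathrm{ch}\big(\Lambda_y\Omega^1_M\big)\cdot\mathrm{Td}(M),\,\mu_M\big\rangle .
\]
Writing the total Chern class formally as $c(M)=\prod_{k=1}^n(1+x_k)$, the holomorphic cotangent bundle $\Omega^1_M$ has Chern roots $-x_1,\dots,-x_n$, so $\mathrm{ch}\big(\Lambda_y\Omega^1_M\big)=\prod_{k=1}^n\big(1+y\,e^{-x_k}\big)$ while $\mathrm{Td}(M)=\prod_{k=1}^n x_k/(1-e^{-x_k})$. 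Therefore $\chi_y[M]$ is the value on $\mu_M$ of the degree-$2n$ part of $\prod_{k=1}^n \dfrac{x_k\big(1+y\,e^{-x_k}\big)}{1-e^{-x_k}}$.

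Finally I would match this with the series defining $T[M]$. The elementary identity
\[
\frac{x\big(1+y\,e^{-x}\big)}{1-e^{-x}}=\frac{x(1+y)}{1-e^{-x}}-xy
\]
rewrites each factor, and the substitution $x\mapsto(1+y)x$ followed by division by $1+y$ carries $\dfrac{x(1+y)}{1-e^{-x}}-xy$ to $Q(y,x)=\dfrac{x(1+y)}{1-e^{-x(1+y)}}-xy$. Since replacing a characteristic power series $\varphi(x)$ by $\lambda^{-1}\varphi(\lambda x)$ leaves the associated genus unchanged---in the degree-$2n$ component of a product over $n$ Chern roots the factor $\lambda^n$ cancels the $n$ copies of $\lambda^{-1}$---one concludes that $\big\langle\prod_k \tfrac{x_k(1+y\,e^{-x_k})}{1-e^{-x_k}},\,\mu_M\big\rangle=\big\langle\prod_k Q(y,x_k),\,\mu_M\big\rangle=T[M]$. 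Comparing coefficients of $y^p$ gives $\chi^p(M)=T_n^p[M]$.

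I expect the only real obstacle to be the bookkeeping of this last step: tracking the precise relation between the series $\frac{x(1+y\,e^{-x})}{1-e^{-x}}$ that arises naturally from $\mathrm{ch}\big(\Lambda_y\Omega^1_M\big)\cdot\mathrm{Td}(M)$ and Hirzebruch's normalized series $Q(y,x)$, and arguing cleanly that the two determine the same multiplicative genus. The analytic content---the Dolbeault isomorphism and Hirzebruch--Riemann--Roch for coherent sheaves---is classical and may simply be cited from \cite{Hirzebruch1966}; the substance here is the identification of this Chern-number formula with the generalized Todd genus.
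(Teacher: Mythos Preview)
The paper does not give its own proof of this theorem; it is simply stated with a citation to \cite[Section 20]{Hirzebruch1966}. Your sketch is the standard argument found there: reduce $\chi^p(M)$ to $\chi(M,\Omega^p_M)$ via Dolbeault, apply Hirzebruch--Riemann--Roch for holomorphic vector bundles, and then identify the resulting characteristic series $\prod_k \dfrac{x_k(1+ye^{-x_k})}{1-e^{-x_k}}$ with $\prod_k Q(y,x_k)$ by the rescaling trick $\varphi(x)\mapsto (1+y)^{-1}\varphi((1+y)x)$. All of your algebraic identities check out, including the key one $\dfrac{x(1+ye^{-x})}{1-e^{-x}}=\dfrac{x(1+y)}{1-e^{-x}}-xy$.

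The only point worth flagging in your ``bookkeeping'' step is that the scaling parameter $\lambda=1+y$ is not a nonzero constant but a polynomial in the indeterminate $y$, so the invariance-of-genus argument is really a statement about equality of two elements of $H^{2n}(M;\mathbb{Q})[y]$ after inverting $1+y$. Since both $\chi_y[M]$ and $T[M]$ are honest polynomials in $y$, the equality over $\mathbb{Q}[y,(1+y)^{-1}]$ immediately descends to $\mathbb{Q}[y]$. This is routine, but it is exactly the kind of thing you anticipated as the ``only real obstacle,'' and it is worth saying explicitly.
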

Note that $T\left[M\right]$ is a polynomial in $y$ whose coefficients
are certain rational combinations of the Chern numbers of $M$. Since
the complex cobordism class of a manifold is completely determined
by its Chern numbers \cite{Milnor1960,Novikov1960}, the Hodge structure
reveals some information about the complex cobordism of a complex
smooth projective variety via the Hirzebruch-Riemann-Roch Theorem.
More specifically, the $\chi_{y}$-genus imposes $\left\lfloor \frac{n+2}{2}\right\rfloor $-many
independent conditions on the Chern numbers of a manifold of complex
dimension $n$ \cite{Libgober1990}. Therefore exactly this many restrictions
on Chern numbers arise from the Hodge structure of a complex smooth
projective variety.

In the case of smooth projective toric varieties, the Hodge structure
can be described in terms of the combinatorics of the corresponding
polytopes.
\begin{prop}
(\cite[Section 9.4]{Cox2011}) Let $X_{P}$ be a smooth projective
toric variety of complex dimension $n$, and let $h\left(P\right)=\left(h_{0},\ldots,h_{n}\right)$
be the $h$-vector of the associated polytope. Then the Hodge numbers
of $X_{P}$ are given by
\[
h^{p,q}=\begin{cases}
h_{p} & \mbox{if }q=p\\
0 & \mbox{if }q\ne p
\end{cases}.
\]

\end{prop}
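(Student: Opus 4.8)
The plan is to compute the $\chi_y$-genus of a smooth projective toric variety $X_P$ in two ways and compare. On one hand, by the Hirzebruch--Riemann--Roch Theorem, $\chi_y[X_P] = \sum_{p=0}^n \chi^p(X_P) y^p$ where $\chi^p(X_P) = \sum_q (-1)^q h^{p,q}(X_P)$. On the other hand, there is a classical formula (due to Danilov, and discussed in Cox--Little--Schenck \cite[Section 9.4]{Cox2011}) computing the $\chi_y$-genus of a smooth projective toric variety directly from the combinatorics of $P$, namely $\chi_y[X_P] = \sum_{k=0}^n h_k \, y^k$, where $h(P) = (h_0,\dots,h_n)$ is the $h$-vector. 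So first I would recall or reprove this combinatorial formula for $\chi_y[X_P]$.

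To establish that combinatorial formula, I would use the fact that $X_P$ has a cell decomposition coming from a choice of generic linear functional $\nu$ on the polytope (equivalently, a generic one-parameter subgroup of the torus), with exactly one cell $\mathbb{C}^q$ for each vertex of index $q$ — this is precisely the Bialynicki-Birula decomposition, and it is the geometric shadow of Lemma \ref{hvector}. Thus $H^{2q}(X_P)$ is free of rank equal to the number of vertices of index $q$, which by Lemma \ref{hvector} is $h_{n-q}$ (or $h_q$ by Dehn--Sommerville), and the odd cohomology vanishes. Since $X_P$ is a smooth projective toric variety its Hodge structure is of Hodge--Tate type, meaning $H^{2q}(X_P;\mathbb{C}) = H^{q,q}(X_P)$ and all other $H^{p,q}$ vanish; this is a standard fact for toric varieties (again \cite[Section 9.4]{Cox2011}). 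Combining, $h^{q,q} = \dim H^{2q}(X_P;\mathbb{C}) = h_q$ and $h^{p,q} = 0$ for $p \neq q$.

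Concretely, the cleanest route is: (1) cite/recall that $H^{2q}(X_P;\mathbb{Q})$ has dimension $h_q$ and $H^{\mathrm{odd}}(X_P) = 0$ — this follows from Theorem \ref{cohom} together with the combinatorics of the Stanley--Reisner ring, or from the Bialynicki-Birula decomposition and Lemma \ref{hvector}; (2) cite/recall that the mixed Hodge structure on the cohomology of a smooth complete toric variety is pure of Hodge--Tate type, so $H^{2q}$ sits entirely in bidegree $(q,q)$; (3) conclude $h^{p,q} = \delta_{p,q} h_p$. Steps (1) and (2) are the substantive inputs, but both are standard and available in the cited reference, so in the write-up I would state them as known facts about toric varieties and assemble the conclusion.

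The main obstacle — or rather the only place requiring care — is justifying that the Hodge structure is concentrated on the diagonal, i.e.\ that there is no room for $h^{p,q}$ with $p \neq q$. For a general smooth projective variety one could have nontrivial $h^{1,0}$, $h^{2,0}$, etc.; what rules this out here is the vanishing of odd cohomology (forcing $h^{p,q} = 0$ whenever $p + q$ is odd) combined with the fact that $X_P$ is rational and its cohomology is generated in degree $2$ by algebraic (divisor) classes, which are of type $(1,1)$ — hence all of $H^{2q}$ consists of $(q,q)$-classes, leaving $h^{p,q} = 0$ for $p \neq q$ and $h^{q,q} = \dim H^{2q} = h_q$. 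I would phrase this using the fact that $X_P$ admits an algebraic cell decomposition, which immediately gives that the cycle class map is an isomorphism onto cohomology in all (even) degrees, pinning down the Hodge type.
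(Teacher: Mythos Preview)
Your argument is correct and is essentially the standard proof found in the cited reference: compute the Betti numbers via the Bia\l{}ynicki--Birula (equivalently, algebraic cell) decomposition to get $\dim H^{2q}(X_P) = h_q$ and $H^{\mathrm{odd}}(X_P)=0$, then use that cohomology is generated by algebraic classes to conclude the Hodge structure is pure of type $(q,q)$ in each degree. Note, however, that the paper does not give its own proof of this proposition at all---it is stated as a known result with a citation to \cite[Section 9.4]{Cox2011}---so there is no ``paper's approach'' to compare against; your outline simply reconstructs the argument that the citation points to.
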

By Definition \ref{chiy}, this can be stated in terms of the $\chi_{y}$-genus
as $\chi^{p}\left(X_{P}\right)=\left(-1\right)^{p}h_{p}$. In order
to make use of the $g$-theorem (Theorem \ref{g}), it will be necessary
to work with $g$-vectors rather than $h$-vectors. The $\chi_{y}$-genus
of a smooth projective toric variety can be written in terms of the
$g$-vector of its associated polytope by using $g_{0}=1$ and $g_{k}=h_{k}-h_{k-1}$
for each $k=1,\ldots,\left\lfloor \frac{n}{2}\right\rfloor $.
\begin{cor}
Let $X_{P}$ be a smooth projective toric variety of complex dimension
$n$ whose associated polytope has $g$-vector $g\left(P\right)=\left(1,g_{1},\ldots,g_{\left\lfloor n/2\right\rfloor }\right)$.
Then for $p=0,\ldots,\left\lfloor \frac{n}{2}\right\rfloor $,
\[
\chi^{p}\left(X_{P}\right)=\left(-1\right)^{p}\sum_{k=0}^{p}g_{k}.
\]

\end{cor}
The Hirzebruch-Riemann-Roch Theorem and the results of \cite{Libgober1990}
can now be used to reveal information about the complex cobordism
of a smooth projective toric variety using the $g$-vector of the
associated polytope.
\begin{thm}
\label{combob}The $g$-vector of a smooth polytope $P$ imposes exactly
$\left\lfloor \frac{n+2}{2}\right\rfloor $-many conditions on the
Chern numbers of the corresponding smooth projective toric variety
$X_{P}$. These conditions are
\begin{equation}
\left(-1\right)^{p}\sum_{k=0}^{p}g_{k}=T_{n}^{p}\left[X_{P}\right]\mbox{ for }p=0,\ldots,\left\lfloor \frac{n}{2}\right\rfloor .\label{eq:combob}
\end{equation}

\end{thm}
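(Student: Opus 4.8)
The plan is to derive the relations (\ref{eq:combob}) by combining the Hirzebruch--Riemann--Roch Theorem with the corollary above expressing $\chi^{p}\left(X_{P}\right)$ in terms of the $g$-vector, and then to account for the count $\left\lfloor \frac{n+2}{2}\right\rfloor$ using the independence statement of \cite{Libgober1990} quoted just before the theorem. First I would recall that, by that corollary, for a smooth projective toric variety $X_{P}$ whose associated polytope has $g$-vector $\left(1,g_{1},\ldots,g_{\left\lfloor n/2\right\rfloor }\right)$, one has $\chi^{p}\left(X_{P}\right)=\left(-1\right)^{p}\sum_{k=0}^{p}g_{k}$ for $p=0,\ldots,\left\lfloor \frac{n}{2}\right\rfloor $. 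On the other hand, the Hirzebruch--Riemann--Roch Theorem identifies $\chi^{p}\left(X_{P}\right)$ with the quantity $T_{n}^{p}\left[X_{P}\right]$, which by construction is a fixed rational linear combination of the Chern numbers of $X_{P}$. Setting the two expressions equal for each $p$ in this range yields precisely the equations (\ref{eq:combob}).

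Next I would count the relations obtained this way. The index $p$ runs over $0,1,\ldots,\left\lfloor \frac{n}{2}\right\rfloor $, so there are $\left\lfloor \frac{n}{2}\right\rfloor +1$ of them, and an elementary check (splitting into the cases $n$ even and $n$ odd) shows $\left\lfloor \frac{n}{2}\right\rfloor +1=\left\lfloor \frac{n+2}{2}\right\rfloor $. It then remains to see that these are all the conditions that the Hodge structure forces on the Chern numbers and that they are independent. For independence I would invoke the result of \cite{Libgober1990} recalled above: the coefficients of the $\chi_{y}$-genus impose exactly $\left\lfloor \frac{n+2}{2}\right\rfloor $ independent linear conditions on the Chern numbers of an $n$-dimensional complex manifold, and the $\left\lfloor \frac{n+2}{2}\right\rfloor $ equations in (\ref{eq:combob}) are exactly the realization of those conditions in the toric case. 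The remaining coefficients $\chi^{p}\left(X_{P}\right)$ for $p>\left\lfloor \frac{n}{2}\right\rfloor $ contribute nothing new: Serre duality gives $h^{p,q}=h^{n-p,n-q}$, whence $\chi^{n-p}\left(X_{P}\right)=\left(-1\right)^{n}\chi^{p}\left(X_{P}\right)$, which for a toric variety is just the Dehn--Sommerville symmetry $h_{p}=h_{n-p}$ already encoded in the $g$-vector.

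The point I would be most careful about is this exhaustiveness-and-independence claim, since a priori the equations (\ref{eq:combob}) for small $p$ could be linearly dependent, or the extra information that $h^{p,q}=0$ for $p\ne q$ could impose further constraints on Chern numbers. The first possibility is excluded by the cited independence of the $\chi_{y}$-genus coefficients; the second does not arise because the Hirzebruch--Riemann--Roch Theorem only ever detects the alternating sums $\chi^{p}$ and no characteristic-number identity can see the individual Hodge numbers beyond what $\chi_{y}$ records. With those two observations in place, the rest of the argument is bookkeeping: expand each $T_{n}^{p}$ from (\ref{eq:genT}) and (\ref{eq:genTy}) in Chern classes, apply Theorem~\ref{Chern} if an explicit toric form is desired, and verify the index arithmetic.
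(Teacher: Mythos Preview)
Your proposal is correct and follows essentially the same route as the paper: the theorem is stated immediately after the corollary computing $\chi^{p}(X_{P})$ and the sentence ``The Hirzebruch--Riemann--Roch Theorem and the results of \cite{Libgober1990} can now be used\ldots'', and the paper treats those two ingredients as constituting the proof. You have simply spelled out the bookkeeping (the index count $\lfloor n/2\rfloor+1=\lfloor(n+2)/2\rfloor$, the Serre duality/Dehn--Sommerville redundancy for $p>\lfloor n/2\rfloor$, and the observation that only the alternating sums $\chi^{p}$ are visible to Chern numbers) that the paper leaves implicit.
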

As an example of one of these conditions, consider the fact that $g_{0}=1$
for any smooth polytope. In this case, Theorem \ref{combob} gives
$1=g_{0}=T_{n}^{0}\left[X_{P}\right]$. But the constant term $T_{n}^{0}$
of the generalized Todd genus is just the usual Todd genus $\mbox{Td}\left[X_{P}\right]$.
Thus the fact that $g_{0}=1$ for any smooth polytope implies the
well-known condition that the Todd genus of any smooth projective
toric variety must equal one \cite{Ishida1990}. The remaining $\left\lfloor \frac{n}{2}\right\rfloor $-many
conditions in Theorem \ref{combob} can be viewed as a generalization
of this obstruction. In fact, the theorem provides the maximum amount
of information about the complex cobordism of a smooth projective
toric variety that can be gleaned from the $g$-vector of the associated
polytope. Combining the obstructions in Theorem \ref{combob} with
the $g$-theorem gives the following
\begin{thm}
\label{obn}If a cobordism class $\left[M\right]\in\Omega_{2n}^{U}$
does not satisfy the relations in \eqref{eq:combob} for some $g$-vector
which satisfies the conditions of the $g$-theorem (Theorem \ref{g}),
then $\left[M\right]$ does not contain a smooth projective toric
variety.
\end{thm}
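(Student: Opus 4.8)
The plan is to prove the contrapositive. Assume that the cobordism class $[M]\in\Omega_{2n}^{U}$ does contain a smooth projective toric variety; I will then exhibit a $g$-vector satisfying the hypotheses of the $g$-theorem for which $[M]$ satisfies the relations in \eqref{eq:combob}, contradicting the hypothesis. First I would fix such a representative $X_{P}$ with $[X_{P}]=[M]$, where $P$ is the associated smooth $n$-polytope and $2n=\dim M$.

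There are then two ingredients to combine. Since every smooth polytope is in particular a simple polytope, the $g$-vector $g(P)=\left(1,g_{1},\ldots,g_{\left\lfloor n/2\right\rfloor }\right)$ is the $g$-vector of a simple $n$-polytope, and hence satisfies conditions (1)--(3) of the $g$-theorem (Theorem \ref{g}) by the forward direction of that theorem. Next I would apply Theorem \ref{combob} to $X_{P}$, which gives
\[
\left(-1\right)^{p}\sum_{k=0}^{p}g_{k}=T_{n}^{p}\left[X_{P}\right]\mbox{ for }p=0,\ldots,\left\lfloor \frac{n}{2}\right\rfloor .
\]
The right-hand side $T_{n}^{p}\left[X_{P}\right]=\left\langle T_{n}^{p}\left(X_{P}\right),\mu_{X_{P}}\right\rangle$ is a fixed rational combination of Chern numbers of $X_{P}$, hence a complex cobordism invariant; since $[M]=[X_{P}]$ we get $T_{n}^{p}[M]=T_{n}^{p}[X_{P}]$ for every $p$. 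The left-hand side depends only on the combinatorics of $P$. Therefore $[M]$ satisfies the relations in \eqref{eq:combob} for the $g$-vector $g(P)$, which satisfies the $g$-theorem, contradicting the hypothesis and finishing the argument.

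I do not expect a genuine obstacle here: the statement is essentially a direct synthesis of Theorem \ref{combob}, which transfers the combinatorial data of the polytope to the Chern numbers of the variety, with the forward direction of the $g$-theorem, which guarantees that the relevant $g$-vector lies in the admissible range. The only point that warrants a moment of care is the observation that both sides of \eqref{eq:combob} descend to the cobordism class --- the left side because it involves only the combinatorics of $P$, and the right side because $T_{n}^{p}$ is a Chern-number functional --- so that the relations, once verified for the particular representative $X_{P}$, hold throughout $[M]$.
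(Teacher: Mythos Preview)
Your argument is correct and matches the paper's approach exactly: the paper does not even write out a separate proof, but simply notes that Theorem~\ref{obn} follows by ``combining the obstructions in Theorem~\ref{combob} with the $g$-theorem,'' which is precisely the contrapositive you spell out. Your added remark that $T_{n}^{p}$ is a cobordism invariant (so the relations pass from $X_{P}$ to $[M]$) is a reasonable clarification of something the paper leaves implicit.
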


\section{Smooth Projective Toric Varieties in Low-Dimensional Cobordism}

Unfortunately, the list of obstructions to a complex cobordism class
containing a smooth projective toric variety given in Theorem \ref{combob}
is only a complete list in the smallest possible dimensions. For $n=1$
and $n=2$, $\left|\pi\left(n\right)\right|=\left\lfloor \frac{n+2}{2}\right\rfloor $.
Thus the $g$-vector obstructions can be used to completely describe
which cobordism classes in $\Omega_{2}^{U}$ and $\Omega_{4}^{U}$
can possibly contain a smooth projective toric variety.
\begin{prop}
The only cobordism class in $\Omega_{2}^{U}$ that is represented
by a smooth projective toric variety is $\left[\mathbb{C}P^{1}\right]$.
\end{prop}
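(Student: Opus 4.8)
The plan is to use dimension one directly, where the combinatorial obstructions of Theorem~\ref{combob} are a complete set of obstructions. In complex dimension $n=1$ we have $\left|\pi(1)\right| = 1 = \left\lfloor \frac{n+2}{2}\right\rfloor$, so a cobordism class in $\Omega_2^U$ is determined by the single Chern number $c_1[M]$, and the only $g$-vector obstruction is the Todd genus condition $\mathrm{Td}[X_P] = 1$. First I would recall that for a complex curve $\mathrm{Td}[M] = \tfrac{1}{2}c_1[M]$, so the condition $\mathrm{Td}[X_P] = 1$ forces $c_1[X_P] = 2$.

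Next I would identify which cobordism class this is. Since $\Omega_2^U \cong \mathbb{Z}$, detected by $c_1$, and $c_1[\mathbb{C}P^1] = 2$ (the tangent bundle of $\mathbb{C}P^1$ has $c_1 = 2$, as $c(\mathbb{C}P^1) = (1+v)^2 = 1 + 2v$ via Theorem~\ref{Chern} applied to the fan of $\mathbb{C}P^1$ with its two opposite rays), the class with $c_1 = 2$ is precisely $\left[\mathbb{C}P^1\right]$. Hence the only cobordism class in $\Omega_2^U$ that can possibly contain a smooth projective toric variety is $\left[\mathbb{C}P^1\right]$.

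Finally I would observe that this class is in fact realized: $\mathbb{C}P^1$ is itself a smooth projective toric variety (its fan in $\mathbb{R}^1$ consists of the two rays generated by $1$ and $-1$, which is complete and regular and is the normal fan of the interval $[0,1]$). Alternatively one can note that every smooth compact toric curve is isomorphic to $\mathbb{C}P^1$, since a complete regular fan in $\mathbb{R}^1$ must be exactly these two rays. So the class $\left[\mathbb{C}P^1\right]$ both satisfies the obstruction and is attained.

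The main point requiring care — though it is genuinely routine here — is the bookkeeping identifying the generator of $\Omega_2^U$ and pinning down that $c_1 = 2$ corresponds to $\left[\mathbb{C}P^1\right]$ rather than to some other class; everything else follows immediately from Theorem~\ref{combob} and the low-dimensional coincidence $\left|\pi(1)\right| = \left\lfloor \frac{3}{2}\right\rfloor = 1$. There is no real obstacle in this dimension; the substance of the paper begins in $\Omega_4^U$ and beyond.
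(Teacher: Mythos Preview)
Your proof is correct and covers the same ground as the paper, though with the emphasis reversed. The paper's primary argument is the one-line observation that $\mathbb{C}P^{1}$ is the only smooth projective toric variety in complex dimension one (which you mention as an alternative at the end), and it records the Todd-genus remark $\left|\pi(1)\right|=1$ only as a secondary comment; you lead with the obstruction computation and mention the classification second. Either route is complete here, and both appear in both treatments.
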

This is true for the simple reason that $\mathbb{C}P^{1}$ is the
only smooth projective toric variety in this dimension. Notice also
that $\left|\pi\left(1\right)\right|=1$, so a cobordism class in
$\Omega_{2}^{U}$ is completely determined by its Todd genus. It is
therefore no surprise that the only cobordism class in $\Omega_{2}^{U}$
containing a smooth projective toric variety is the class with Todd
genus equal to one.

The outcome becomes slightly more complicated in $\Omega_{4}^{U}$.
This cobordism group is determined by $\left|\pi\left(2\right)\right|=2$
independent conditions on the Chern numbers. In particular, we can
completely describe a complex cobordism class $\left[M\right]\in\Omega_{4}^{U}$
in terms of the Todd genus $\mbox{Td}\left[M\right]$ and the top
Chern number $c_{2}\left[M\right]$. In this case, Theorem \ref{combob}
tells us that there are also exactly two conditions imposed on the
cobordism class of a smooth projective toric variety by the $g$-vector
of its associated polytope. More specifically, these conditions are
$1=T_{2}^{0}\left[M\right]$ and $-\left(1+g_{1}\right)=T_{2}^{1}\left[M\right]$
for $\left[M\right]\in\Omega_{4}^{U}$. Using \eqref{eq:genT} and
\eqref{eq:genTy}, these conditions can be written more conveniently
as $\mbox{Td}\left[M\right]=1$ and $c_{2}\left[M\right]=g_{1}+3$.

This provides necessary conditions for a cobordism class in $\Omega_{4}^{U}$
to contain a smooth projective toric variety. Since $g_{1}+3$ is
equal to the number of facets of the associated polytope in this dimension,
it is straight-forward to construct a smooth 2-polytope with $g_{1}\in\left\{ 0,1,2,\ldots\right\} $.
This gives a clear, concise answer to Question \ref{main} for $\Omega_{4}^{U}$.
\begin{thm}
A cobordism class $\left[M\right]\in\Omega_{4}^{U}$ can be represented
by a smooth projective toric variety if and only if $\mbox{\emph{Td}}\left[M\right]=1$
and $c_{2}\left[M\right]\in\left\{ 3,4,5,\ldots\right\} $.
\end{thm}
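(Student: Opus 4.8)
The plan is to prove both directions separately, with the combinatorial obstructions from Theorem~\ref{combob} handling necessity and an explicit construction handling sufficiency.

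For the forward direction, suppose $[M]\in\Omega_4^U$ contains a smooth projective toric variety $X_P$. Since $|\pi(2)|=2$, the cobordism class is determined by $\mathrm{Td}[M]$ and $c_2[M]$, and by Theorem~\ref{combob} the $g$-vector of $P$ (here $g(P)=(1,g_1)$ with $g_1\ge 0$ by the $g$-theorem) imposes exactly two conditions, worked out in the paragraph preceding the statement: $\mathrm{Td}[M]=1$ and $c_2[M]=g_1+3$. Since $g_1$ is a nonnegative integer, $c_2[M]=g_1+3\in\{3,4,5,\ldots\}$. I would include the short computation verifying $c_2[M]=g_1+3$: expand $\prod_{k=1}^2 Q(y,x_k)$ to extract $T_2^1$ in terms of $c_1^2$ and $c_2$, use the toric identities (Proposition~\ref{one} and Theorem~\ref{Chern}) to evaluate on $\mu_{X_P}$, and match against $-(1+g_1)=\chi^1(X_P)$; alternatively cite that $g_1+3$ counts the facets of the $2$-polytope $P$, i.e. the edges, and that for a smooth toric surface $c_2[X_P]$ equals the Euler characteristic, which equals the number of edges of $P$.

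For the reverse direction, given any integer $c\ge 3$, set $g_1=c-3\ge 0$ and construct a smooth projective toric surface $X_P$ with $\mathrm{Td}[X_P]=1$ and $c_2[X_P]=c$. The cleanest construction: take $P$ to be a polygon with $c$ edges that is smooth, i.e. at each vertex the two emanating edge directions form a $\mathbb{Z}^2$-basis. Concretely one may start from $\mathbb{C}P^2$ (the triangle, $c=3$) and perform $c-3$ successive vertex truncations; by Proposition~\ref{truncatevertex} each truncation increases $g_1$ by one while preserving smoothness and projectivity (Proposition on regular star subdivisions), and after $c-3$ steps we reach $g(P)=(1,c-3)$. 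The Todd genus of any smooth projective toric variety is $1$ automatically (the $g_0=1$ consequence noted after Theorem~\ref{combob}), and $c_2[X_P]=g_1+3=c$ by the forward computation. This realizes the class determined by $(\mathrm{Td},c_2)=(1,c)$.

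The main obstacle is the $c_2[M]=g_1+3$ identity: making rigorous the passage from the abstract $\chi_y$-genus / generalized Todd genus to the concrete statement that the second Chern number of the toric surface equals its number of edges. Everything else is either the $g$-theorem bookkeeping or the blow-up construction, both already set up in the excerpt. I would handle this identity by the Euler-characteristic route — for a complete regular fan in $\mathbb{R}^2$ with $m$ rays, $X_\Sigma$ has $m$ fixed points and $\chi(X_\Sigma)=c_2[X_\Sigma]=m$, while $m=f_0=f_1$ equals the number of edges of $P$, which in terms of the $h$-vector $(1,m-2,1)$ gives $g_1=h_1-h_0=m-3$ — so that $c_2=g_1+3$, exactly matching the combinatorial obstruction and confirming consistency of the two directions.
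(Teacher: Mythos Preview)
Your proposal is correct and follows essentially the same approach as the paper: the forward direction uses the two combinatorial obstructions from Theorem~\ref{combob} together with the $g$-theorem, and the reverse direction realizes each $g_1\ge 0$ by a smooth polygon with $g_1+3$ edges. The paper simply asserts the latter is straightforward, while you give the explicit blow-up construction and an Euler-characteristic verification of $c_2=g_1+3$; one small caveat is that Proposition~\ref{truncatevertex} is stated only for dimension $\ge 3$, so in dimension two you should appeal directly to the fact that a regular star subdivision of a maximal cone adds one ray (equivalently, truncating a vertex of a polygon adds one edge) rather than cite that proposition.
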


\section{Smooth Projective Toric Varieties in $\Omega_{6}^{U}$}

The answer to Question \ref{main} is already significantly more complicated
in $\Omega_{6}^{U}$. In this dimension, there are again $\left\lfloor \frac{3+2}{2}\right\rfloor =2$
independent conditions on the Chern numbers of smooth projective toric
varieties determined by the $g$-vector of their associated polytopes.
However, a cobordism class in $\Omega_{6}^{U}$ is given by $\left|\pi\left(3\right)\right|=3$
Chern numbers, so one of these Chern numbers is independent of the
$g$-vector.

By Theorem \ref{combob}, the relations from the $g$-vector are $1=T_{3}^{0}\left[M\right]$
and $-\left(1+g_{1}\right)=T_{3}^{1}\left[M\right]$ for $\left[M\right]\in\Omega_{6}^{U}$.
Using the definition of $T_{3}^{1}\left[M\right]$ (see \eqref{eq:genT}
and \eqref{eq:genTy}), these can be written more conveniently as
\begin{align}
c_{1}c_{2}\left[M\right]=24\mbox{ and }c_{3}\left[M\right] & =2g_{1}+4.\label{eq:comb3}
\end{align}
Since $\mbox{Td}\left[M\right]=\frac{1}{24}c_{1}c_{2}\left[M\right]$
in this dimension (cf. \cite[Section 1.7]{Hirzebruch1966}), the first
of these relations is just the requirement for the Todd genus to equal
one. Note that the Chern number $c_{1}^{3}\left[X_{P}\right]$ is
completely independent of the $g$-vector. It would be convenient
if the two relations coming from the $g$-vector were the only obstructions
to a complex cobordism class containing a smooth projective toric
variety, but what actually happens is much more complicated.
\begin{thm}
\label{ob3}Let $\left[M\right]\in\Omega_{6}^{U}$.
\begin{enumerate}
\item If $c_{1}c_{2}\left[M\right]\ne24$ or $c_{3}\left[M\right]\notin\left\{ 4,6,8,\ldots\right\} $,
then $\left[M\right]$ is not represented by a smooth projective toric
variety.
\item Suppose $c_{1}c_{2}\left[M\right]=24$ and $c_{3}\left[M\right]=4$.
Then $\left[M\right]$ is represented by a smooth projective toric
variety if and only if $c_{1}^{3}\left[M\right]=64$.
\item Suppose $c_{1}c_{2}\left[M\right]=24$ and $c_{3}\left[M\right]=6$.
Then $\left[M\right]$ is represented by a smooth projective toric
variety if and only if $c_{1}^{3}\left[M\right]=2a^{2}+54$ for some
$a\in\mathbb{Z}$.
\item Suppose $c_{1}c_{2}\left[M\right]=24$ and $c_{3}\left[M\right]\in\left\{ 8,10,12,\ldots\right\} $.
Then $\left[M\right]$ is represented by a smooth projective toric
variety.
\end{enumerate}
\end{thm}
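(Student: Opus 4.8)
The plan is to combine the combinatorial obstructions of Theorem~\ref{combob} with a direct classification of smooth $3$-polytopes and an explicit blow-up construction. Part (1) is immediate: it is exactly the contrapositive of Theorem~\ref{obn} specialized to $n=3$, using the two relations in \eqref{eq:comb3}. For the remaining parts, observe that $g(P)=(1,g_1)$ for a smooth $3$-polytope, and by \eqref{eq:comb3} the value $c_3[M]=2g_1+4$ pins down $g_1$; the only Chern number left free is $c_1^3[X_P]$. So the entire content of (2)--(4) is: for each admissible $g_1$, determine precisely which values of $c_1^3$ are realized by smooth projective toric $3$-folds whose polytope has that $g_1$.

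First I would handle the small cases $g_1=0$ and $g_1=1$ (parts (2) and (3)) by invoking the classification of smooth $3$-polytopes with few facets. When $g_1=0$ the polytope is a simplex, so $X_P=\mathbb{C}P^3$, and a direct computation gives $c_1^3[\mathbb{C}P^3]=64$, which proves (2). When $g_1=1$ the polytope has five facets; the smooth $3$-polytopes with five facets are classified (they are the products $\Delta^1\times\Delta^2$ and a short list of others, corresponding to the classified smooth toric $3$-folds with Picard rank $2$), and for each I would compute $c_1^3$ via Theorem~\ref{Chern} and Proposition~\ref{one}. The claim is that these values are exactly the integers of the form $2a^2+54$; the parametrization should come from the one integer parameter describing the relevant projective bundles $\mathbb{P}(\mathcal{O}\oplus\mathcal{O}(a))$-type fans over $\mathbb{C}P^2$ (or over $\mathbb{C}P^1\times\mathbb{C}P^1$), and I would check that the possible $c_1^3$ collapse to this single quadratic family, with $a\in\mathbb{Z}$ (allowing negative $a$, which gives the same polytope up to symmetry).

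For part (4), the strategy is constructive: start from a smooth projective toric $3$-fold with $g_1=2$ (for instance a suitable blow-up, or a product, realizing $c_3=8$) and then increase $c_3$ by $2$ repeatedly via vertex truncations. By Proposition~\ref{truncatevertex}, truncating a vertex sends $g_1\mapsto g_1+1$, hence $c_3\mapsto c_3+2$, and by Propositions on blow-ups this stays within smooth projective toric varieties. The key point is that a vertex truncation is a blow-up at a point, which (as in the remark preceding Proposition~\ref{Ustinovsky}) changes the variety by connect-summing with a copy of $\mathbb{C}P^3$-type local data; I expect the effect on $c_1^3$ to be a fixed shift (the blow-up of a $3$-fold at a point changes $c_1^3$ by $-8$, mirroring $\mathbb{C}P^3$). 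So iterated point blow-ups give, for $g_1=k\ge 2$, the values $c_1^3 = v_0 - 8(k-2)$ for a starting value $v_0$ — but this is only one value per $g_1$, whereas part (4) asserts \emph{every} cobordism class with $c_3\in\{8,10,\dots\}$ is represented, i.e. all integers $c_1^3$ must occur. Thus the construction needs more flexibility: one must also blow up along edges or along longer $\mathbb{C}P^1$'s, or blow up points on varieties other than iterated point-blow-ups of $\mathbb{C}P^3$, so that for each fixed $g_1\ge 2$ the attainable $c_1^3$ range over all of $\mathbb{Z}$. The main obstacle — and the step requiring the real work — is exactly this: producing, for every $g_1\ge2$, a family of smooth projective toric $3$-folds with that $g_1$ whose values of $c_1^3$ exhaust $\mathbb{Z}$, and verifying the $c_1^3$ computation in each case using Theorems~\ref{cohom} and~\ref{Chern}. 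Once such a family is in hand, combining it with part (1) yields the stated if-and-only-if in (4).
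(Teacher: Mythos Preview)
Your outline for parts (1)--(3) matches the paper's argument exactly: part (1) is Theorem~\ref{obn} specialized to $n=3$, part (2) reduces to the simplex and $\mathbb{C}P^3$, and part (3) uses Kleinschmidt's classification of smooth toric $3$-folds with five rays, with the paper computing $c_1^3\bigl[X_3(a_1,a_2)\bigr]=54$ and $c_1^3\bigl[X_3(a_1)\bigr]=2a_1^2+54$ via Theorems~\ref{cohom} and~\ref{Chern}.

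For part (4) you have correctly located the gap but not filled it, and your suggested fixes are more complicated than necessary. Two points. First, a small correction: by Proposition~\ref{K3} the Chern number $c_1^3$ must be even, so you need only hit the even integers, not all of $\mathbb{Z}$. Second, the paper does \emph{not} introduce edge blow-ups or multiple starting varieties for different $g_1$; instead it constructs a single one-parameter family $X(a)$, $a\in\mathbb{Z}$, at the $g_1=2$ level---namely the $\mathbb{C}P^1$-bundles over the Hirzebruch surfaces $\mathcal{H}_a$ (the fan $\Sigma(a)$ in Figure~\ref{fig:asymp3})---and computes $c_1^3\bigl[X(a)\bigr]=48+2a$, which already exhausts all even integers. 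Your observation that a point blow-up shifts $c_1^3$ by $-8$ is exactly Lemma~\ref{blowup3}, and since the starting family already covers every even value, applying point blow-ups to each $X(a)$ covers every even value at every $g_1\ge2$ as well. So the ``real work'' you flagged is resolved by one explicit bundle construction rather than the more elaborate edge-truncation scheme you were contemplating.
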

Part 1 of this theorem is a consequence of Theorem \ref{obn}. More
specifically, it follows from \eqref{eq:comb3} and from the $g$-theorem,
which states that any $g$-vector that a simple polytope can possibly
have in dimension three must satisfy $g_{1}\in\left\{ 0,1,2,\ldots\right\} $.

Part 4 tells us that if we choose $c_{3}\left[M\right]$ to be large
enough, then the $g$-vector becomes the only obstruction to a complex
cobordism class containing a smooth projective toric variety.

\subsection{K-theory Chern numbers and $\Omega_{6}^{U}$}

In order to determine which combinations of Chern numbers represent
cobordism classes containing smooth projective toric varieties, it
is first essential to know which combinations of Chern numbers represent
complex cobordism classes in general. The Hattori-Stong Theorem (Theorem
\ref{HattoriStong}) can be applied in this dimension to describe
all such combinations of Chern numbers. More specifically, we must
consider each of the seven partitions of the nonnegative integers
$m\le3$. We must compute the K-theory Chern number for each partition
and determine the conditions that must be met in order for these numbers
to all have integer values. Proposition \ref{Mayer} is very useful
in performing these computations.

For example, for the empty partition, we have $s_{\varnothing}\left(\right)=1$,
so $\kappa_{\varnothing}\left[M\right]=\mbox{Td}\left[M\right]=\frac{1}{24}c_{1}c_{2}\left[M\right]$.
This gives the divisibility relation $c_{1}c_{2}\left[M\right]\equiv0\mod24$.
The remaining partitions are more difficult to work with. For example,
if we write $c\left(M\right)=\left(1+x_{1}\right)\left(1+x_{2}\right)\left(1+x_{3}\right)$,
then $\kappa_{\left\{ 1\right\} }\left[M\right]$ is computed as follows.

\begin{align*}
\kappa_{\left\{ 1\right\} }\left[M\right] & =\left\langle \mbox{ch}\gamma_{1}\cdot\mbox{Td}\left(M\right),\mu_{M}\right\rangle \\
 & =\left\langle \sigma_{1}\left(e^{x_{1}}-1,e^{x_{2}}-1,e^{x_{3}}-1\right)\cdot\mbox{Td}\left(M\right),\mu_{M}\right\rangle \\
 & =\Biggl\langle\left(\sum_{i=1}^{3}x_{i}+\frac{1}{2}\sum_{i=1}^{3}x_{i}^{2}+\frac{1}{6}\sum_{i=1}^{3}x_{i}^{3}\right)\cdot\\
 & \qquad\ \left(1+\frac{1}{2}c_{1}\left(M\right)+\frac{1}{12}\left(c_{1}\left(M\right)^{2}+c_{2}\left(M\right)\right)+\frac{1}{24}c_{1}\left(M\right)c_{2}\left(M\right)\right),\mu_{M}\Biggl\rangle\\
 & =\left\langle \frac{1}{2}c_{1}^{3}\left(M\right)-\frac{11}{12}c_{1}\left(M\right)c_{2}\left(M\right)+\frac{1}{2}c_{3}\left(M\right),\mu_{M}\right\rangle \\
 & =\frac{1}{2}c_{1}^{3}\left[M\right]-\frac{11}{12}c_{1}c_{2}\left[M\right]+\frac{1}{2}c_{3}\left[M\right]
\end{align*}
This gives a second divisibility relation $6c_{1}^{3}\left[M\right]-11c_{1}c_{2}\left[M\right]+6c_{3}\left[M\right]\equiv0\mod12$.
Combining this with $c_{1}c_{2}\left[M\right]\equiv0\mod24$ implies
that $c_{1}^{3}\left[M\right]+c_{3}\left[M\right]$ is even. Repeating
this process for the remaining partitions $\left\{ 1,1\right\} $,
$\left\{ 2\right\} $, $\left\{ 1,1,1\right\} $, $\left\{ 1,2\right\} $,
and $\left\{ 3\right\} $ (see \cite[Section 4.3]{Wilfong2013} for
details) yields the following
\begin{prop}
\label{K3}A cobordism class $\left[M\right]\in\Omega_{6}^{U}$ can
have Chern numbers $c_{1}^{3}\left[M\right]$, $c_{1}c_{2}\left[M\right]$,
and $c_{3}\left[M\right]$ if and only if the following divisibility
relations hold.
\begin{align*}
c_{1}^{3}\left[M\right] & \equiv0\mod2\\
c_{1}c_{2}\left[M\right] & \equiv0\mod24\\
c_{3}\left[M\right] & \equiv0\mod2
\end{align*}

\end{prop}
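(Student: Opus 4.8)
The plan is to apply the Hattori--Stong Theorem (Theorem \ref{HattoriStong}) directly, computing the K-theory Chern number $\kappa_{\omega}\left[M\right]$ for each of the seven partitions $\omega$ of the integers $0,1,2,3$, and then extracting from the resulting seven integrality conditions a minimal equivalent set of divisibility relations on $c_{1}^{3}\left[M\right]$, $c_{1}c_{2}\left[M\right]$, and $c_{3}\left[M\right]$. The excerpt has already carried out two of these computations: the empty partition gives $c_{1}c_{2}\left[M\right]\equiv0\bmod24$, and the partition $\left\{1\right\}$ gives $6c_{1}^{3}\left[M\right]-11c_{1}c_{2}\left[M\right]+6c_{3}\left[M\right]\equiv0\bmod12$, which in the presence of the first relation simplifies to $c_{1}^{3}\left[M\right]+c_{3}\left[M\right]$ being even. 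So the remaining work is to perform the analogous computation for $\left\{1,1\right\}$, $\left\{2\right\}$, $\left\{1,1,1\right\}$, $\left\{1,2\right\}$, and $\left\{3\right\}$, and check that they contribute no new constraints beyond forcing $c_{1}^{3}\left[M\right]$ and $c_{3}\left[M\right]$ each to be even (together with $c_{1}c_{2}\left[M\right]\equiv0\bmod24$).

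Concretely, for each partition $\omega$ of $m\le3$ I would: (i) form the virtual bundle $s_{\omega}\left(\gamma_{1},\ldots,\gamma_{m}\right)$ built from the Atiyah $\gamma$-functions of the tangent bundle; (ii) use Proposition \ref{Mayer} to write $\mbox{ch}\,\gamma_{k}=\sigma_{k}\left(e^{x_{1}}-1,e^{x_{2}}-1,e^{x_{3}}-1\right)$ and hence express $\mbox{ch}\,s_{\omega}\left(\gamma_{1},\ldots,\gamma_{m}\right)$ as an explicit symmetric power series in $x_{1},x_{2},x_{3}$ with rational coefficients; (iii) multiply by the Todd class $\mbox{Td}\left(M\right)=1+\tfrac12 c_{1}+\tfrac1{12}\left(c_{1}^{2}+c_{2}\right)+\tfrac1{24}c_{1}c_{2}+\cdots$; (iv) extract the degree-$6$ part and rewrite the symmetric functions of the $x_{i}$ in terms of $c_{1}^{3}$, $c_{1}c_{2}$, $c_{3}$ using Newton's identities (with $c_{1}=\sigma_{1}$, $c_{2}=\sigma_{2}$, $c_{3}=\sigma_{3}$ for $n=3$); (v) evaluate on $\mu_{M}$ to get a rational linear combination of the three Chern numbers; (vi) impose that this be an integer. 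Having assembled all seven integrality statements, I would show by elementary linear algebra over $\mathbb{Z}$ — clearing denominators and reducing the resulting system — that the full set of conditions is equivalent to the three stated relations. The converse direction (that these three relations suffice) is then immediate from Theorem \ref{HattoriStong}, since the three relations were derived precisely so as to imply integrality of every $\kappa_{\omega}\left[M\right]$.

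The main obstacle is purely computational bookkeeping: the functions $s_{\omega}$ for the length-two and length-three partitions (e.g. $s_{\left\{1,1\right\}}$, $s_{\left\{1,2\right\}}$, $s_{\left\{1,1,1\right\}}$) involve products and sums of the $\gamma_{k}$, and expanding $\mbox{ch}\,s_{\omega}$ to degree $6$ in three variables, multiplying by the Todd class, and converting to the Chern-number basis produces unwieldy rational coefficients where small arithmetic slips are easy. The reference \cite[Section 4.3]{Wilfong2013} is cited for these details, so in the paper itself this is delegated; the conceptual content is entirely contained in the two worked cases already shown, and the remaining five cases follow the identical template. A secondary, minor point to verify is that the seven conditions are genuinely consistent and that no pair of them combines (as the $\left\{1\right\}$ case did with the empty case) to yield a constraint stronger than "each of $c_{1}^{3}\left[M\right]$ and $c_{3}\left[M\right]$ is even"; this is checked by noting that the manifolds $\mathbb{C}P^{3}$, $\mathbb{C}P^{1}\times\mathbb{C}P^{1}\times\mathbb{C}P^{1}$, and a suitable third generator realize enough independent triples $\left(c_{1}^{3},c_{1}c_{2},c_{3}\right)$ satisfying the three relations to show they cannot be tightened.
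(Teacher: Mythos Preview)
Your proposal is correct and follows precisely the paper's own approach: compute $\kappa_{\omega}\left[M\right]$ for each of the seven partitions of $m\le3$ via Proposition~\ref{Mayer}, then reduce the resulting integrality conditions to the three stated divisibility relations, with the detailed computations for the five remaining partitions delegated to \cite[Section 4.3]{Wilfong2013}. Your added remark about exhibiting explicit manifolds to confirm the relations cannot be tightened is a reasonable sanity check, though not strictly needed since Hattori--Stong already gives the equivalence once all seven $\kappa_{\omega}$ are shown to be integers under the three relations.
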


\subsection{Smooth projective toric varieties representing $\Omega_{6}^{U}$}

Now the remaining parts of Theorem \ref{ob3} can be proven.
\begin{proof}
[Proof of Theorem \ref{ob3} part 2] Suppose $c_{1}c_{2}\left[M\right]=24$
and $c_{3}\left[M\right]=4$ for $\left[M\right]\in\Omega_{6}^{U}$.
If $\left[M\right]$ is represented by a smooth projective toric variety,
then the $g$-vector of its associated polytope must be $\left(1,0\right)$
according to \eqref{eq:comb3}. But there is only one smooth 3-polytope
with this $g$-vector, namely the three-dimensional simplex. The smooth
projective toric variety associated to this polytope is $\mathbb{C}P^{3}$.
Thus $\left[M\right]$ can be represented by a smooth projective toric
variety if and only if $\left[M\right]=\left[\mathbb{C}P^{3}\right]$.
Since $c_{1}^{3}\left[\mathbb{C}P^{3}\right]=64$, this condition
must be met in order for $\left[M\right]$ to contain a smooth projective
toric variety.
\end{proof}
Note in particular that this proves that there are other obstructions
to a complex cobordism class containing a smooth projective toric
variety in addition to those arising from the $g$-vector.

To prove part 3 of Theorem \ref{ob3}, consider a cobordism class
$\left[M\right]\in\Omega_{6}^{U}$ with $c_{1}c_{2}\left[M\right]=24$
and $c_{3}\left[M\right]=6$. If $\left[M\right]$ is represented
by a smooth projective toric variety, then the $g$-vector of its
associated polytope must be $\left(1,1\right)$ according to \eqref{eq:comb3}.
In particular, these are the polytopes that have exactly five facets,
two more than the dimension. Kleinschmidt completely classified all
smooth projective toric varieties whose associated polytopes have
two more facets than the ambient dimension \cite{Kleinschmidt1988}.
In order to determine which cobordism classes contain smooth projective
toric varieties in this situation, it suffices to compute the possible
values for $c_{1}^{3}\left[X_{P}\right]$ for each of the varieties
that were described by Kleinschmidt.

The smooth projective toric varieties of complex dimension three whose
polytopes have five facets can most easily be described in terms of
the associated fans. These fans are given in Figure \ref{fig:Kleinschmidt},
where $a_{1}$ and $a_{2}$ are integers such that $0\le a_{1}\le a_{2}$.
The maximum-dimensional cones of these fans are obtained by taking
the span of all but one of the $u_{k}$ vectors and all but one of
the $v_{k}$ vectors. Kleinschmidt proved that any smooth projective
toric variety in this dimension whose associated polytope has exactly
five facets is isomorphic to one of the varieties $X_{3}\left(a_{1}\right)$
or $X_{3}\left(a_{1},a_{2}\right)$ associated to $\Sigma_{3}\left(a_{1}\right)$
and $\Sigma_{3}\left(a_{1},a_{2}\right)$, respectively \cite{Kleinschmidt1988}.

\begin{figure}
\begin{centering}
\includegraphics[scale=0.22]{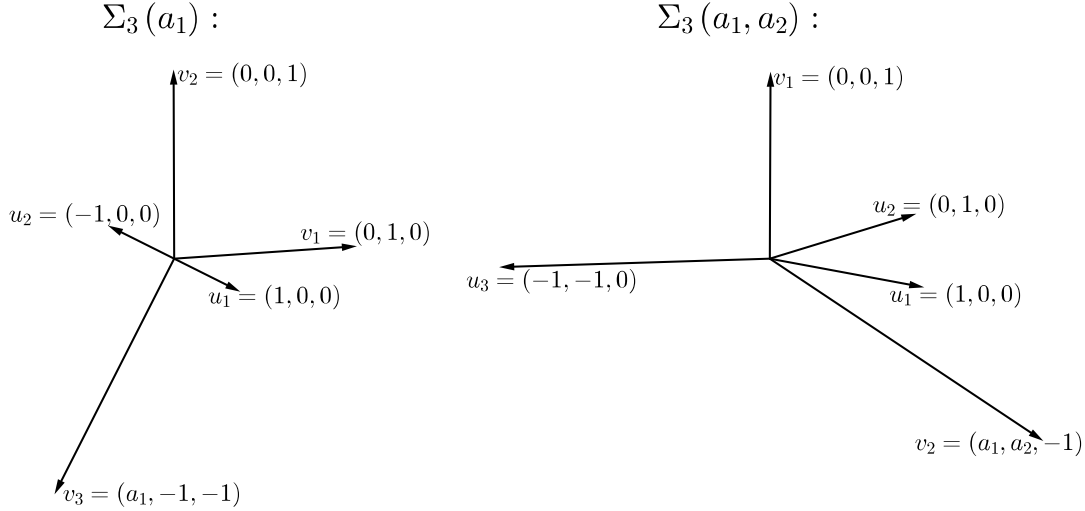}
\par\end{centering}

\caption{Fans with five generating rays that correspond to smooth projective
toric varieties of complex dimension three\label{fig:Kleinschmidt}}

\end{figure}

\begin{lem}
For the smooth projective toric varieties classified by Kleinschmidt
in complex dimension three, we have $c_{1}^{3}\left[X_{3}\left(a_{1},a_{2}\right)\right]=54$
and $c_{1}^{3}\left[X_{3}\left(a_{1}\right)\right]=2a_{1}^{2}+54$.\end{lem}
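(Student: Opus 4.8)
The plan is to compute $c_1^3[X_P]$ directly from the fan data in Figure \ref{fig:Kleinschmidt} using the cohomological description of toric varieties. First I would apply Theorem \ref{Chern}: the total Chern class of a smooth projective toric variety with generating rays $u_1,\ldots,v_s$ is $\prod(1+u_k)\prod(1+v_k)$, so $c_1[X_P]$ is the sum of all the ray generators (viewed as degree-two cohomology classes), and $c_1^3[X_P]$ is obtained by cubing this sum and evaluating against the fundamental class. To make the evaluation concrete, I would invoke Theorem \ref{cohom} to get $H^*(X_P) \cong \mathbb{Z}[u_1,\ldots,v_s]/(L+J)$ together with Proposition \ref{one}, which says that the product of the $n$ ray generators spanning any maximal cone evaluates to $1$ on $\mu_{X_P}$.

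Concretely, for each of the two families one reads off from the figure the linear relations generating $L$ (these express, say, two of the $u_k$'s and one of the $v_k$'s in terms of the remaining rays, with the shift parameters $a_1, a_2$ entering the coefficients) and the Stanley--Reisner relations in $J$ (the non-faces: no maximal cone omits two of the $u$'s or two of the $v$'s, so products like $u_iu_j$ for the ``missing'' pair, suitably interpreted, vanish, and the full product of all $u$'s and the full product of all $v$'s vanish since those sets do not span cones). Using these relations one rewrites $c_1 = \sum u_k + \sum v_k$ in terms of a chosen algebraically independent subset of the generators, cubes it, reduces modulo $L+J$ down to a multiple of a single top-degree monomial corresponding to a maximal cone, and then applies Proposition \ref{one}. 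For $X_3(a_1,a_2)$ one expects all $a_i$-dependence to cancel, leaving $c_1^3 = 54$; for $X_3(a_1)$ one expects a residual term, giving $c_1^3 = 2a_1^2 + 54$. As a sanity check, note that $X_3(0) = X_3(0,0)$ should be (a bundle over) $\mathbb{C}P^1 \times \mathbb{C}P^2$-type space with $c_1^3 = 54$, consistent with both formulas at $a_1 = 0$; and one could cross-check $c_3 = 6$ (the Euler characteristic, equal to the number of maximal cones) and $c_1c_2 = 24$ (the Todd genus being $1$) against \eqref{eq:comb3} with $g_1 = 1$.

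The main obstacle is purely bookkeeping: correctly extracting the linear relations $\theta_i \in L$ from the coordinates of the rays in Figure \ref{fig:Kleinschmidt} (which depend on how the $a_i$ are placed) and then carrying out the reduction of $(\sum u_k + \sum v_k)^3$ modulo $L + J$ without error. There is no conceptual difficulty — it is a finite computation in a concrete graded ring — but the two families have slightly different fan structures (one uses a single parameter in a $\mathbb{P}^1$-bundle over $\mathbb{P}^2$, the other two parameters in a $\mathbb{P}^2$-bundle over $\mathbb{P}^1$, or the analogous projectivized-sum descriptions), so each must be handled separately, and one must be careful that the chosen maximal cone used for the final evaluation via Proposition \ref{one} is actually a cone of the fan. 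Organizing the computation around the fibration structure (expressing $X_3(a_1,a_2)$ and $X_3(a_1)$ as projectivizations of sums of line bundles over $\mathbb{C}P^1$ or $\mathbb{C}P^2$ and using the standard presentation of the cohomology of a projective bundle) is likely the cleanest route and avoids most of the index-chasing.
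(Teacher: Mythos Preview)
Your proposal is correct and follows essentially the same approach as the paper: use Theorem \ref{cohom} to present $H^*(X_P)$ as $\mathbb{Z}[u_k,v_k]/(L+J)$, use Theorem \ref{Chern} to write $c_1$ as the sum of the ray generators, cube it and reduce modulo $L+J$, and then evaluate on the fundamental class via Proposition \ref{one}. The paper even records the simplified presentations $H^*(X_3(a_1,a_2))\cong\mathbb{Z}[u_3,v_2]/(u_3^3-(a_1+a_2)u_3^2v_2,\,v_2^2)$ and $H^*(X_3(a_1))\cong\mathbb{Z}[u_2,v_3]/(u_2^2-a_1u_2v_3,\,v_3^3)$, which is exactly the reduction to an algebraically independent subset that you describe.
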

\begin{proof}
Theorem \ref{cohom} can be used to compute the cohomology rings
\[
H^{*}\left(X_{3}\left(a_{1},a_{2}\right)\right)\cong\mathbb{Z}\left[u_{3},v_{2}\right]/\left(u_{3}^{3}-\left(a_{1}+a_{2}\right)u_{3}^{2}v_{2},v_{2}^{2}\right)
\]
and
\[
H^{*}\left(X_{3}\left(a_{1}\right)\right)\cong\mathbb{Z}\left[u_{2},v_{3}\right]/\left(u_{2}^{2}-a_{1}u_{2}v_{3},v_{3}^{3}\right).
\]
Theorem \ref{Chern} can then be used to compute the cohomology classes
$c_{1}\left(X_{3}\left(a_{1},a_{2}\right)\right)^{3}$ and $c_{1}\left(X_{3}\left(a_{1}\right)\right)^{3}$
in these cohomology rings. Using Proposition \ref{one} to evaluate
these cohomology classes on the fundamental class of the appropriate
variety gives the result.
\end{proof}
Part 3 of Theorem \ref{ob3} is a consequence of this lemma and Kleinschmidt's
classification result.

Since any cobordism class $\left[M\right]\in\Omega_{6}^{U}$ must
have an even value for $c_{3}\left[M\right]$ by Proposition \ref{K3},
part 4 of Theorem \ref{ob3} states that for sufficiently large values
of $c_{3}\left[M\right]$, the only obstructions to a cobordism class
containing a smooth projective toric variety arise from the $g$-vector
relations. To prove this, a smooth projective toric variety will be
constructed with each possible combination of Chern numbers. As a
start, we will find a smooth projective toric variety for each cobordism
class with $c_{1}c_{2}\left[M\right]=24$ and $c_{3}\left[M\right]=8$.
\begin{lem}
\label{g1is2}Choose $a\in\mathbb{Z}$, and let $\Sigma\left(a\right)$
be the fan shown in Figure \ref{fig:asymp3}. The maximal cones in
$\Sigma\left(a\right)$ are all cones spanned by one of the $u_{k}$,
one of the $v_{k}$, and one of the $w_{k}$. The corresponding smooth
projective toric variety $X\left(a\right)$ satisfies $c_{1}c_{2}\left[X\left(a\right)\right]=24$,
$c_{3}\left[X\left(a\right)\right]=8$, and $c_{1}^{3}\left[X\left(a\right)\right]=48+2a$.

\begin{figure}
\begin{centering}
\includegraphics[scale=0.95]{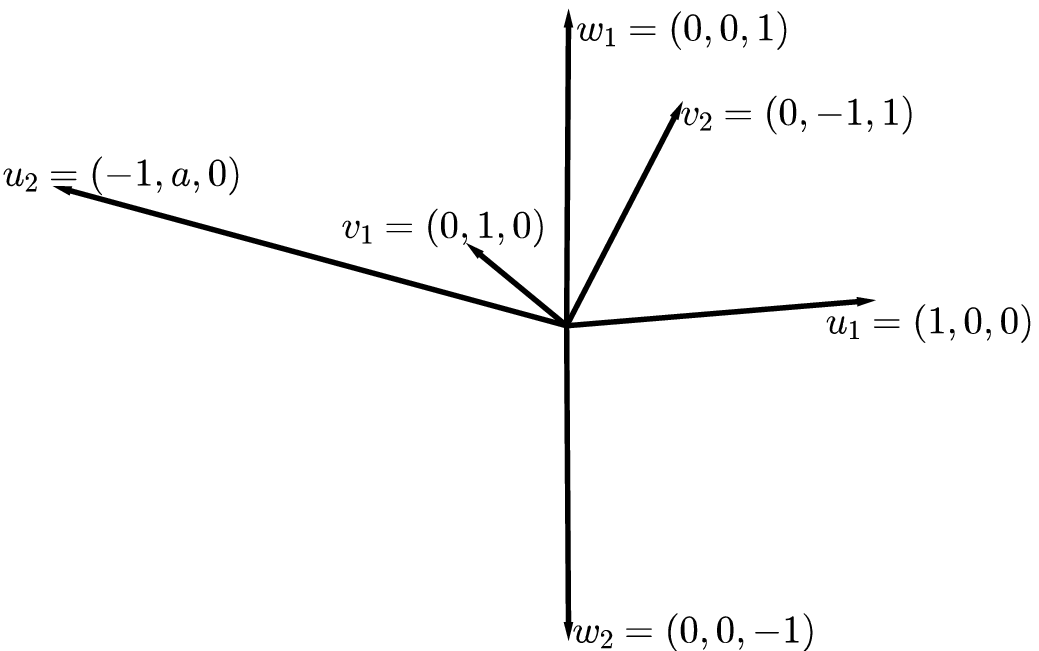}
\par\end{centering}

\caption{The fan $\Sigma\left(a\right)$\label{fig:asymp3}}

\end{figure}
\end{lem}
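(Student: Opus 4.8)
The plan is to read the three Chern numbers off the combinatorics of $\Sigma(a)$ using the cohomological description of smooth projective toric varieties recalled above; the first two cost essentially nothing, and only $c_1^3$ requires real work. Since $\Sigma(a)$ has six generating rays, the polytope normal to it has six facets, so its $g$-vector is $(1,2)$. Applied to the smooth projective toric variety $X(a)$, relation \eqref{eq:comb3} then gives at once $c_1 c_2[X(a)] = 24$ (equivalently, the Todd genus equals one) and $c_3[X(a)] = 2g_1 + 4 = 8$. One could alternatively note that $c_3[X(a)]$ is the Euler characteristic of $X(a)$, equal to the number $2\cdot 2\cdot 2 = 8$ of maximal cones of $\Sigma(a)$.

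For $c_1^3[X(a)]$, I would first apply Theorem \ref{cohom}. Writing the six rays as $u_1,u_2,v_1,v_2,w_1,w_2$, the only pairs that fail to span a cone are $\{u_1,u_2\}$, $\{v_1,v_2\}$, $\{w_1,w_2\}$, so the Stanley--Reisner ideal $J$ is generated by $u_1u_2$, $v_1v_2$, $w_1w_2$; the linear ideal $L$ coming from the coordinates of the rays in Figure \ref{fig:asymp3} lets us eliminate $u_1,v_1,w_1$ in terms of $u_2,v_2,w_2$. This presents $H^*(X(a);\mathbb{Z})$ as $\mathbb{Z}[u_2,v_2,w_2]$ modulo three quadratic relations obtained by substituting the eliminated generators into $u_1u_2$, $v_1v_2$, $w_1w_2$, with the parameter $a$ surfacing in the relation that rewrites $w_2^2$. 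Theorem \ref{Chern} gives $c(X(a)) = \prod_{k=1}^{2}(1+u_k)(1+v_k)(1+w_k)$, so after the same substitution $c_1(X(a))$ becomes an explicit linear form in $u_2,v_2,w_2$ with $a$-dependent coefficients.

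It then remains to cube that linear form, reduce it modulo the three quadratic relations until only a multiple of $u_2v_2w_2$ survives, and evaluate on the fundamental class. Here Proposition \ref{one} supplies the normalization $\langle u_2v_2w_2,\mu_{X(a)}\rangle = 1$, since $u_2,v_2,w_2$ span a maximal cone of $\Sigma(a)$. Carrying out this reduction produces the integer $c_1^3[X(a)]$, which I expect to equal $48+2a$.

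The only genuine difficulty is bookkeeping: extracting the ray coordinates from Figure \ref{fig:asymp3} correctly and tracking $a$ honestly through the elimination of $u_1,v_1,w_1$, the quadratic relations, and the cubing, so that the answer emerges with exactly the right slope and intercept in $a$ rather than an affine function of $a$ that is off by a constant. A built-in sanity check is that as $a$ ranges over $\mathbb{Z}$, the values $48+2a$ must sweep out every even integer, matching the constraint $c_1^3[M]\equiv 0\bmod 2$ of Proposition \ref{K3}; this is precisely what makes the lemma the base case needed for the proof of part 4 of Theorem \ref{ob3}.
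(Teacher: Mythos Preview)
Your plan is correct and matches the paper's proof essentially step for step: the paper also derives $c_{1}c_{2}=24$ and $c_{3}=8$ from the $g$-vector $(1,2)$ via \eqref{eq:comb3} (observing that the polytope is combinatorially a cube), then computes $H^{*}(X(a))$ from Theorem \ref{cohom}, cubes $c_{1}$ from Theorem \ref{Chern}, and evaluates using Proposition \ref{one}. One small correction of expectation: in the paper's presentation the parameter $a$ enters the relation for $v_{2}^{2}$ (namely $v_{2}^{2}=au_{2}v_{2}$), not for $w_{2}^{2}$, and the resulting ring is $\mathbb{Z}[u_{2},v_{2},w_{2}]/(u_{2}^{2},\,v_{2}^{2}-au_{2}v_{2},\,w_{2}^{2}-v_{2}w_{2})$; with that in hand the reduction of $c_{1}^{3}$ indeed yields $(48+2a)\,u_{2}v_{2}w_{2}$.
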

\begin{proof}
It is straight-forward to verify that $X\left(a\right)$ is a smooth
projective toric variety. More specifically, $X\left(a\right)$ is
a $\mathbb{C}P^{1}$-bundle over the Hirzebruch surface $\mathcal{H}_{a}$.
The polytope associated to $X\left(a\right)$ is combinatorially equivalent
to a cube. Thus the $g$-vector of this polytope is $\left(1,2\right)$.
Then $c_{1}c_{2}\left[X\left(a\right)\right]=24$, and $c_{3}\left[X\left(a\right)\right]=8$
by \eqref{eq:comb3}.

The structure of the fan $\Sigma\left(a\right)$ can be used to compute
the integer cohomology ring $H^{*}\left(X\left(a\right)\right)\cong\mathbb{Z}\left[u_{2},v_{2},w_{2}\right]/\left(u_{2}^{2},v_{2}^{2}-au_{2}v_{2},w_{2}^{2}-v_{2}w_{2}\right)$.
Computing $c_{1}\left(X\left(a\right)\right)^{3}$ in this cohomology
ring produces $c_{1}\left(X\left(a\right)\right)^{3}=\left(48+2a\right)u_{2}v_{2}w_{2}$.
Since $\mbox{pos}\left(u_{2},v_{2},w_{2}\right)$ is a maximal cone
in $\Sigma\left(a\right)$, this means that $c_{1}^{3}\left[X\left(a\right)\right]=48+2a$
according to Proposition \ref{one}.
\end{proof}
Since $c_{1}^{3}\left[M\right]$ must be even by Proposition \ref{K3},
this lemma tells us that any cobordism class satisfying $c_{1}c_{2}\left[M\right]=24$
and $c_{3}\left[M\right]=8$ can be represented by a smooth projective
toric variety by choosing $X\left(a\right)$ with an appropriate value
for $a$.

Next we consider all cobordism classes with $c_{3}\left[M\right]\ge8$.
\begin{lem}
\label{blowup3}Let $X_{1}$ be a smooth projective toric variety
with associated fan $\Sigma_{X_{1}}$ in $\mathbb{R}^{3}$. Suppose
$\Sigma_{X_{2}}$ is obtained through a regular star subdivision of
a maximal cone in $\Sigma_{X_{1}}$. That is, $X_{2}$ is an equivariant
blow-up of $X_{1}$ at a torus-fixed point. Then the change in complex
cobordism is given by $c_{2}\left[X_{2}\right]=c_{2}\left[X_{1}\right]$,
$c_{3}\left[X_{2}\right]=c_{3}\left[X_{1}\right]+2$, and $c_{1}^{3}\left[X_{2}\right]=c_{1}^{3}\left[X_{1}\right]-8$.\end{lem}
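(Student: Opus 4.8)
The plan is to compute the change in each Chern number directly from the combinatorics of the star subdivision, using the description of the Chern class in Theorem~\ref{Chern} together with the evaluation rule in Proposition~\ref{one}. Let $\sigma = \mbox{pos}(t_1,t_2,t_3)$ be the maximal cone of $\Sigma_{X_1}$ being subdivided, and let $t_0 = t_1+t_2+t_3$ be the new generating ray. Passing from $\Sigma_{X_1}$ to $\Sigma_{X_2}$ replaces the single cohomology relation coming from $\sigma$ with three relations coming from $\mbox{pos}(t_0,t_i,t_j)$, and adds one new generator $t_0$ subject to the linear relation $t_0 = t_1+t_2+t_3$ (after accounting for the new Stanley--Reisner relations, $t_0 t_1 t_2 t_3$-type monomials and $t_it_jt_k$ vanish). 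The first step is to record these ring-theoretic changes precisely via Theorem~\ref{cohom}.

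First I would handle $c_1^3$. By Theorem~\ref{Chern}, $c_1(X_2) = c_1(X_1) + t_0$ (pulled back appropriately along $\pi$), where I abuse notation by writing $c_1(X_1)$ for the image of the sum of the old generators. Then $c_1(X_2)^3 = (\pi^*c_1(X_1))^3 + 3(\pi^*c_1(X_1))^2 t_0 + 3(\pi^*c_1(X_1)) t_0^2 + t_0^3$. Now $(\pi^*c_1(X_1))^3$ evaluates to $c_1^3[X_1]$ since $\pi$ has degree one, and the cross terms must be computed using the relations above: $t_0$ restricted to the exceptional $\mathbb{C}P^2$ is the hyperplane class (up to sign), and the key numerical facts are $\langle t_0^3, \mu\rangle = \pm$(something computable), $\langle \pi^*c_1(X_1)\cdot t_0^2,\mu\rangle$, etc. Since $X_{\sigma}$ is a point, the blow-up glues in $\mathbb{C}P^2$ and the only contributions to these mixed terms come from that $\mathbb{C}P^2$; the arithmetic should collapse to $c_1^3[X_2] = c_1^3[X_1] - 8$, consistent with the classical fact that blowing up a point on a complex surface drops $c_1^2$ by $1$, here in the threefold setting giving $-8$.

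For $c_3$ and $c_2$ I would use the multiplicativity/additivity of Chern numbers under blow-up, or, more in the spirit of this paper, invoke Proposition~\ref{Ustinovsky}: the change in cobordism class depends only on $\overline{\mbox{st}}\,\sigma$, which for a maximal cone is just $\sigma$ together with its faces, i.e.\ the standard cone. Hence the cobordism change is the same as for the model blow-up $\mathbb{C}P^3 \to \mathbb{C}P^3$ at a torus-fixed point (equivalently, $\mbox{Bl}_{\text{pt}}\mathbb{C}P^3$), and I can read off $c_2[X_2] - c_2[X_1]$ and $c_3[X_2]-c_3[X_1]$ from that single computation. Concretely, $\mbox{Bl}_{\text{pt}}\mathbb{C}P^3$ is a $\mathbb{C}P^1$-bundle over $\mathbb{C}P^2$, whose Chern numbers are easily computed: $c_3 = 8$ versus $c_3[\mathbb{C}P^3] = 4$ gives the $+2$; $c_1 c_2$ is unchanged at $24$, forcing $c_2[X_2]=c_2[X_1]$ after noting $c_1$ changes in a controlled way (alternatively $c_3$ is the Euler characteristic, which increases by $1$ for a point blow-up in complex dimension $3$ since $\chi(\mathbb{C}P^2) - \chi(\text{pt}) = 2$, giving $\chi$ up by $2$, i.e.\ $c_3$ up by $2$ — wait, Euler characteristic of the exceptional divisor $\mathbb{C}P^2$ is $3$, minus $1$ for the removed point, so $c_3$ up by $2$).

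The main obstacle is the bookkeeping in the $c_1^3$ computation: one must track signs and the precise pushforward/pullback behavior of the generator $t_0$, since $\pi^*$ is not a ring isomorphism and $t_0$ is an exceptional class. The cleanest route is probably to fix explicit coordinates for the model cone (take $t_1,t_2,t_3$ the standard basis, $t_0 = (1,1,1)$), write out $H^*(\mbox{Bl}_{\text{pt}}\mathbb{C}P^3)$ fully via Theorem~\ref{cohom}, express $c_1 = t_1+t_2+t_3+t_0 + (\text{classes from the rest of the fan})$, cube it, reduce modulo the Stanley--Reisner and linear ideals, and evaluate on a convenient maximal cone using Proposition~\ref{one}. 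Everything else is then either this explicit calculation or an appeal to Proposition~\ref{Ustinovsky} to transfer the local answer to arbitrary $X_1$.
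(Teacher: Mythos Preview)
Your proposal is correct and ultimately lands on exactly the paper's argument: invoke Proposition~\ref{Ustinovsky} to see that the change in cobordism depends only on the closed star of the maximal cone (which is just that cone itself), reduce to the single model case $X_1=\mathbb{C}P^3$, and compute the Chern numbers of $\mbox{Bl}_{\text{pt}}\mathbb{C}P^3$ explicitly via Theorems~\ref{cohom}, \ref{Chern}, and Proposition~\ref{one}. The paper simply goes straight to that reduction for all three Chern numbers rather than first attempting the binomial expansion of $(\pi^*c_1(X_1)+t_0)^3$; note also that in complex dimension three the relevant Chern number is $c_1c_2$, not $c_2$ alone (the lemma's ``$c_2$'' should be read as $c_1c_2$, as the paper's own proof makes clear).
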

\begin{proof}
By Proposition \ref{Ustinovsky}, the change in cobordism only depends
on the closed star of the cone that is being subdivided. By applying
an appropriate unimodular transformation, we can assume without loss
of generality that the cone being subdivided is $\mbox{pos}\left(e_{1},e_{2},e_{3}\right)$,
where $e_{k}$ is the $k^{\mbox{th}}$ standard basis vector. The
closed star of this maximal cone is $\mbox{pos}\left(e_{1},e_{2},e_{3}\right)$
itself, so the change in cobordism during an equivariant blow-up at
a torus-fixed point is the same for any toric variety. This means
that it suffices to compute the change in Chern numbers for just one
specific example.

For simplicity, choose $X_{1}=\mathbb{C}P^{3}$, so $c_{1}c_{2}\left[X_{1}\right]=24$,
$c_{3}\left[X_{1}\right]=4$, and $c_{1}^{3}\left[X_{1}\right]=64$.
Let $X_{2}$ be the smooth projective toric variety associated to
the fan obtained by subdividing the cone $\mbox{pos}\left(e_{1},e_{2},e_{3}\right)$
in the fan corresponding to $X_{1}$. Using Theorems \ref{cohom}, \ref{Chern},
and Proposition \ref{one}, we can compute $c_{1}c_{2}\left[X_{2}\right]=24=c_{1}c_{2}\left[X_{1}\right]$,
$c_{3}\left[X_{2}\right]=6=c_{3}\left[X_{1}\right]+2$, and $c_{1}^{3}\left[X_{2}\right]=56=c_{1}^{3}\left[X_{1}\right]-8$.
This proves the lemma.
\end{proof}
Now part 4 of Theorem \ref{ob3} can be proven by using these blow-ups.
\begin{proof}
[Proof of Theorem \ref{ob3} part 4]By Lemma \ref{g1is2}, any cobordism
class satisfying $c_{1}c_{2}\left[M\right]=24$ and $c_{3}\left[M\right]=8$
can be represented by a smooth projective toric variety. According
to Lemma \ref{blowup3}, each equivariant blow-up at a torus-fixed
point increases the value of $c_{3}\left[M\right]$ by two and decreases
the value of $c_{1}^{3}\left[M\right]$ by eight. Since these Chern
numbers must always be even by Proposition \ref{K3}, applying sufficiently
many blow-ups to the smooth projective toric varieties $X\left(a\right)$
produces smooth projective toric varieties in every complex cobordism
class with $c_{1}c_{2}\left[M\right]=24$ and $c_{3}\left[M\right]\in\left\{ 8,10,12,\ldots\right\} $.
\end{proof}

\section{Smooth Projective Toric Varieties in $\Omega_{8}^{U}$}

The techniques used to answer Question \ref{main} in $\Omega_{6}^{U}$
can be applied to $\Omega_{8}^{U}$ as well. Unfortunately, the outcome
in this dimension is significantly more complicated. In this case,
there are $\left|\pi\left(4\right)\right|=5$ Chern numbers that determine
a cobordism class, but only $\left\lfloor \frac{4+2}{2}\right\rfloor =3$
of these are determined by $g$-vectors in the case of smooth projective
toric varieties. Because of a less complete understanding of smooth
4-polytopes compared to smooth polyhedra, only partial results can
be obtained by extending the techniques that were used in $\Omega_{6}^{U}$.

First, we will find a convenient description for the restrictions
from Theorem \ref{combob} in this dimension. These relations are
$1=T_{4}^{0}\left[M\right]$, $-\left(1+g_{1}\right)=T_{4}^{1}\left[M\right]$,
and $1+g_{1}+g_{2}=T_{4}^{2}\left[M\right]$, if $\left[M\right]\in\Omega_{8}^{U}$
contains a smooth projective toric variety. The definition of $T_{4}^{p}\left[M\right]$
and some computation can be used to write these relations in a more
useful format (cf. \cite[Section 4.4]{Wilfong2013} and \cite{Libgober1990}).
Combining the relations with the $g$-theorem (Theorem \ref{g}) gives
the following result, which is just Theorem \ref{obn} applied to
$n=4$.
\begin{thm}
\label{comb4}Let $\left[M\right]\in\Omega_{8}^{U}$. If $\left[M\right]$
does not satisfy the equations
\begin{align*}
c_{4}\left[M\right] & =5+3g_{1}+g_{2}\\
c_{1}c_{3}\left[M\right] & =50+6g_{1}-2g_{2}\\
c_{1}^{4}\left[M\right] & =4c_{1}^{2}c_{2}\left[M\right]+3c_{2}^{2}\left[M\right]+3g_{1}-3g_{2}-675
\end{align*}
for some $g$-vector $\left(1,g_{1},g_{2}\right)$ such that $0\le g_{1}$
and $0\le g_{2}\le\frac{1}{2}g_{1}\left(g_{1}+1\right)$, then $\left[M\right]$
does not contain a smooth projective toric variety.
\end{thm}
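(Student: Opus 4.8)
The plan is to read this statement as Theorem \ref{obn} specialized to $n=4$, so the real work is to make the abstract relations $(-1)^p\sum_{k=0}^{p}g_k = T_4^p[M]$ explicit and to spell out the $g$-theorem in dimension four. First I would invoke Theorem \ref{combob}: if a smooth projective toric variety $X_P$ represents $[M]$, then writing $g(P) = (1,g_1,g_2)$ we obtain the three conditions $1 = T_4^0[X_P]$, $-(1+g_1) = T_4^1[X_P]$, and $1+g_1+g_2 = T_4^2[X_P]$ (these are all of them, since $T_4^4 = T_4^0$ and $T_4^3 = T_4^1$ by the symmetry of the $\chi_y$-genus under $p \leftrightarrow n-p$). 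The remaining steps are then: (ii) express $T_4^0$, $T_4^1$, $T_4^2$ as rational linear combinations of the five Chern numbers $c_4$, $c_1c_3$, $c_2^2$, $c_1^2c_2$, $c_1^4$ and solve the resulting $3\times 5$ system for $c_4$, $c_1c_3$, $c_1^4$ in terms of $g_1$, $g_2$ and the two free Chern numbers; and (iii) translate Theorem \ref{g} in dimension four into the displayed inequalities on $g_1$ and $g_2$. The theorem is then the contrapositive of the conjunction of (i)--(iii).

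For (ii) I would expand the Hirzebruch series directly. With $u = x(y+1)$ and $\frac{u}{1-e^{-u}} = 1 + \tfrac{u}{2} + \tfrac{u^2}{12} + 0\cdot u^3 - \tfrac{u^4}{720} + \cdots$, one gets $Q(y,x) = 1 + \tfrac{1-y}{2}x + \tfrac{(1+y)^2}{12}x^2 + 0\cdot x^3 - \tfrac{(1+y)^4}{720}x^4 + \cdots$. The degree-four part of $\prod_{k=1}^{4} Q(y,x_k)$ is $T_4(y,c_1(M),\dots,c_4(M))$; expanding in powers of $y$ and symmetrizing the $x_k$ (so that $\sigma_i = c_i$) yields $T_4^0$, $T_4^1$, $T_4^2$. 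Here $T_4^0$ is the ordinary Todd genus $\tfrac{1}{720}(-c_1^4 + 4c_1^2c_2 + 3c_2^2 + c_1c_3 - c_4)[M]$, and $T_4^1$, $T_4^2$ are analogous rational combinations; setting the three expressions equal to $1$, $-(1+g_1)$, $1+g_1+g_2$ and solving the linear system produces the three displayed identities. As a consistency check, the first relation alone reads $c_1^4[M] = 4c_1^2c_2 + 3c_2^2 + c_1c_3 - c_4 - 720$, which upon substituting $c_4 = 5+3g_1+g_2$ and $c_1c_3 = 50+6g_1-2g_2$ collapses exactly to the third displayed equation. This multiplicative-sequence expansion is the only genuinely calculational step, and it is where arithmetic slips are most likely; I would cross-check the coefficients against $\mathbb{C}P^4$ (for which $g(P) = (1,0,0)$ and all Chern numbers are known) and against the toric varieties with $g$-vector $(1,1,0)$ and $(1,2,0)$.

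For (iii): a simple $4$-polytope has $g$-vector $(g_0,g_1,g_2)$, and Theorem \ref{g} requires $g_0 = 1$, $g_1 \ge 0$, and $0 \le g_{k+1} \le g_k^{\langle k\rangle}$ for $k = 1,\dots,\lfloor 4/2\rfloor - 1 = 1$, i.e. only the single constraint $0 \le g_2 \le g_1^{\langle 1\rangle}$. Since the binomial $1$-expansion of $g_1$ is simply $g_1 = \binom{g_1}{1}$, we get $g_1^{\langle 1\rangle} = \binom{g_1+1}{2} = \tfrac12 g_1(g_1+1)$ (consistent with $0^{\langle 1\rangle} = 0$), which is precisely the bound $0 \le g_2 \le \tfrac12 g_1(g_1+1)$ in the statement. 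Assembling everything: if $[M]$ is represented by a smooth projective toric variety $X_P$, then by (iii) the $g$-vector $(1,g_1,g_2)$ of $P$ satisfies these inequalities, and by (i)--(ii) the Chern numbers of $[M]$ satisfy the three displayed equations for that $(g_1,g_2)$; the theorem is the contrapositive. The main obstacle is purely the bookkeeping in the degree-four Hirzebruch expansion of step (ii); steps (i) and (iii) are formal given the earlier results in the paper.
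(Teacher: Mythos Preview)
Your proposal is correct and follows essentially the same approach as the paper: the paper likewise states that this is simply Theorem \ref{obn} specialized to $n=4$, obtains the three relations $1=T_4^0[M]$, $-(1+g_1)=T_4^1[M]$, $1+g_1+g_2=T_4^2[M]$ from Theorem \ref{combob}, expands the $T_4^p$ via the generalized Todd series, and combines with the $g$-theorem. Your write-up is in fact more detailed than the paper's (which defers the multiplicative-sequence computation to an external reference), and your explicit derivation of the bound $g_1^{\langle 1\rangle}=\binom{g_1+1}{2}$ and your consistency check via the Todd-genus relation are exactly the right sanity checks.
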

As expected, the $g$-vector places obstructions on three of the Chern
numbers, and the remaining two Chern numbers $c_{1}^{2}c_{2}\left[M\right]$
and $c_{2}^{2}\left[M\right]$ are independent of the $g$-vector.
Note that there could be further restrictions on the $g$-vector beyond
what the $g$-theorem provides for simple polytopes since our polytopes
are also required to be smooth.

\subsection{K-theory Chern numbers and $\Omega_{8}^{U}$}

Before describing the cobordism classes that contain smooth projective
toric varieties, it is again essential to know exactly which combinations
of Chern numbers correspond to complex cobordism classes in $\Omega_{8}^{U}$.
In this dimension, complex cobordism is determined by $\left|\pi\left(4\right)\right|=5$
Chern numbers. That means there are twelve K-theory Chern numbers
arising from the partitions of nonnegative integers less than five.
According to the Hattori-Stong Theorem (Theorem \ref{HattoriStong}),
each of these gives a divisibility relation on the Chern numbers of
a complex cobordism class in $\Omega_{8}^{U}$.

For example, consider the empty partition. In this case, $s_{\varnothing}\left(\right)=1$,
so the corresponding K-theory Chern number is
\begin{align*}
\kappa_{\varnothing}\left[M\right] & =\mbox{Td}\left[M\right]=\frac{1}{720}\left(-c_{1}^{4}\left[M\right]+4c_{1}^{2}c_{2}\left[M\right]+3c_{2}^{2}\left[M\right]+c_{1}c_{3}\left[M\right]-c_{4}\left[M\right]\right)
\end{align*}
(cf. \cite[Section 1.7]{Hirzebruch1966}). This gives the divisibility
relation
\[
-c_{1}^{4}\left[M\right]+4c_{1}^{2}c_{2}\left[M\right]+3c_{2}^{2}\left[M\right]+c_{1}c_{3}\left[M\right]-c_{4}\left[M\right]\equiv0\mod720
\]
for Chern numbers in $\Omega_{8}^{U}$.

A similar process can be used to find the divisibility relations resulting
from the remaining partitions (see \cite[Section 4.4]{Wilfong2013}
for details). These are given in Table \ref{tab:Div}.The remaining
five partitions not listed in the table are partitions of the complex
dimension four itself. For these partitions, $\kappa_{\omega}\left[M\right]$
is already an integer combination of Chern numbers, so these do not
impose any additional divisibility relations. The relations in Table
\ref{tab:Div} can be combined to give a characterization of all combinations
of Chern numbers in $\Omega_{8}^{U}$.

\begin{center}
\begin{table}
\begin{centering}
\begin{tabular}{|r|l|}
\hline
\emph{Partition} & \emph{Divisibility Relation}\tabularnewline
\hline
$\varnothing$ & $-c_{1}^{4}\left[M\right]+4c_{1}^{2}c_{2}\left[M\right]+3c_{2}^{2}\left[M\right]+c_{1}c_{3}\left[M\right]-c_{4}\left[M\right]\equiv0\mod720$\tabularnewline
\hline
$\left\{ 1\right\} $ & $2c_{1}^{4}\left[M\right]-5c_{1}^{2}c_{2}\left[M\right]+5c_{1}c_{3}\left[M\right]-2c_{4}\left[M\right]\equiv0\mod12$\tabularnewline
\hline
$\left\{ 1,1\right\} $ & $6c_{1}^{2}c_{2}\left[M\right]-17c_{1}c_{3}\left[M\right]+14c_{4}\left[M\right]\equiv0\mod12$\tabularnewline
\hline
$\left\{ 2\right\} $ & $14c_{1}^{4}\left[M\right]-47c_{1}^{2}c_{2}\left[M\right]+12c_{2}^{2}\left[M\right]+46c_{1}c_{3}\left[M\right]-28c_{4}\left[M\right]\equiv0\mod12$\tabularnewline
\hline
$\left\{ 1,1,1\right\} $ & none (since $\kappa_{\left\{ 1,1,1\right\} }\left[M\right]=c_{1}c_{3}\left[M\right]-2c_{4}\left[M\right]\in\mathbb{Z}$)\tabularnewline
\hline
$\left\{ 1,2\right\} $ & $3c_{1}^{2}c_{2}\left[M\right]-2c_{2}^{2}\left[M\right]-9c_{1}c_{3}\left[M\right]+12c_{4}\left[M\right]\equiv0\mod2$\tabularnewline
\hline
$\left\{ 3\right\} $ & $4c_{1}^{4}\left[M\right]-15c_{1}^{2}c_{2}\left[M\right]+6c_{2}^{2}\left[M\right]+15c_{1}c_{3}\left[M\right]-12c_{4}\left[M\right]\equiv0\mod2$\tabularnewline
\hline
\end{tabular}
\par\end{centering}

\caption{Divisibility relations for Chern numbers in $\Omega_{8}^{U}$\label{tab:Div}}
\end{table}

\par\end{center}
\begin{prop}
\label{K4}A complex cobordism class $\left[M\right]\in\Omega_{8}^{U}$
can have Chern numbers $c_{1}^{4}\left[M\right]$, $c_{1}^{2}c_{2}\left[M\right]$,
$c_{2}^{2}\left[M\right]$, $c_{1}c_{3}\left[M\right]$, and $c_{4}\left[M\right]$
if and only if the following divisibility relations hold.

\begin{align*}
-c_{1}^{4}\left[M\right]+4c_{1}^{2}c_{2}\left[M\right]+3c_{2}^{2}\left[M\right]+c_{1}c_{3}\left[M\right]-c_{4}\left[M\right] & \equiv0\mod720\\
6c_{1}^{2}c_{2}\left[M\right]-5c_{1}c_{3}\left[M\right]+2c_{4}\left[M\right] & \equiv0\mod12\\
c_{1}^{2}c_{2}\left[M\right]+c_{1}c_{3}\left[M\right] & \equiv0\mod2\\
2c_{1}^{4}\left[M\right]-5c_{1}^{2}c_{2}\left[M\right]+5c_{1}c_{3}\left[M\right]-2c_{4}\left[M\right] & \equiv0\mod12\\
2c_{1}^{4}\left[M\right]+c_{1}^{2}c_{2}\left[M\right]-2c_{1}c_{3}\left[M\right]-4c_{4}\left[M\right] & \equiv0\mod12
\end{align*}

\end{prop}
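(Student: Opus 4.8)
The plan is to invoke the Hattori--Stong Theorem (Theorem~\ref{HattoriStong}) applied to $n=4$, which reduces the problem to showing that the five stated congruences are equivalent to the integrality of all twelve K-theory Chern numbers $\kappa_\omega[M]$ for partitions $\omega$ of $m\le 4$. Five of the twelve partitions (those of $m=4$) give no condition since $\kappa_\omega[M]$ is already an integral combination of Chern numbers, as noted in the text; and the partition $\{1,1,1\}$ gives none for the same reason. So the real content is the seven partitions $\varnothing,\{1\},\{1,1\},\{2\},\{1,2\},\{3\},\{1,1,1\}$, whose divisibility relations are collected in Table~\ref{tab:Div}. The task is then purely one of elementary number theory: show that the system of congruences in Table~\ref{tab:Div} is equivalent to the cleaner system in the Proposition statement.

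First I would record that the first congruence in the Proposition is literally the $\varnothing$ row of the table, so nothing is needed there. Next, I would treat the remaining congruences in the table modulo their prime-power components separately (all moduli are $2$, $12=4\cdot 3$; $720 = 16\cdot 9 \cdot 5$), using the Chinese Remainder Theorem. The $\{3\}$ and $\{1,2\}$ rows are mod $2$; reducing them mod $2$ and simplifying coefficients gives $c_1^4 + c_1^2 c_2 + c_1 c_3 \equiv 0$ and $c_1^2 c_2 + c_1 c_3 \equiv 0$, whose sum/difference yields $c_1^4 \equiv 0 \bmod 2$ — but that follows from the $\varnothing$ relation mod $2$ together with $3c_2^2\equiv c_2^2$, so one must check carefully which of these mod-$2$ facts are genuinely new. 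I expect the third congruence in the Proposition, $c_1^2 c_2[M] + c_1 c_3[M] \equiv 0 \bmod 2$, to be exactly the surviving independent mod-$2$ condition after this bookkeeping. Then I would handle the mod-$3$ and mod-$4$ parts of the $\{1\},\{1,1\},\{2\}$ rows, combining them with the mod-$9$ and mod-$16$ information extracted from the $\varnothing$ row, to distill the two mod-$12$ congruences that appear in the Proposition (the second and fourth, and the fifth as a further consequence/repackaging). Finally I would verify the reverse implication: that the five listed congruences, conversely, force every table row, so that the characterization is an if-and-only-if.

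The main obstacle I anticipate is the bookkeeping of redundancy: the twelve K-theory Chern numbers are highly dependent, and it is easy either to retain a congruence that is actually implied by others (producing a non-minimal, though still correct, list) or to drop one that is genuinely needed. Concretely, the subtle point is sorting out, prime by prime, exactly which linear combinations of $c_1^4, c_1^2 c_2, c_2^2, c_1 c_3, c_4$ are pinned down modulo $2$, modulo $3$, modulo $4$, modulo $8$ or $16$, and modulo $9$, and then reassembling a minimal generating set of congruences via CRT. Since the computation of each $\kappa_\omega[M]$ via Proposition~\ref{Mayer} (expanding $\sigma_k(e^{x_1}-1,\dots,e^{x_4}-1)\cdot\mathrm{Td}(M)$ to degree $4$ and evaluating) is routine but lengthy, I would relegate those expansions to the cited reference \cite[Section 4.4]{Wilfong2013} and present only the reduction of Table~\ref{tab:Div} to the Proposition's five relations, checking both directions of the equivalence explicitly.
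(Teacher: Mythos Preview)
Your proposal is correct and follows the same route as the paper: both apply the Hattori--Stong Theorem, compute the K-theory Chern numbers $\kappa_\omega[M]$ for the partitions of $m\le 4$ to produce the raw divisibility relations in Table~\ref{tab:Div} (with the details of those expansions deferred to \cite[Section~4.4]{Wilfong2013}), and then reduce that table to the five displayed congruences. The paper itself does not spell out the linear-algebra/CRT combination step that you outline, so your proposal is in fact slightly more detailed than the paper's treatment; one small wording slip is that after correctly noting $\{1,1,1\}$ contributes no condition you still list it among the ``real content'' partitions.
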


\subsection{Smooth projective toric varieties representing $\Omega_{8}^{U}$}

The description of possible Chern numbers of cobordism classes is
clearly much more complicated in dimension eight compared to dimension
six. Fortunately, to address Question \ref{main}, we are only concerned
with cobordism classes which contain smooth projective toric varieties.
These classes must also satisfy the conditions given in Theorem \ref{comb4}.
Substituting these conditions into the first relation of Proposition
\ref{K4} shows that it is always satisfied for cobordism classes
which contain smooth projective toric varieties. (This is equivalent
to the fact that the Todd genus must be one.) The second relation
in Proposition \ref{K4} simplifies to $c_{1}^{2}c_{2}\left[M\right]\equiv0\mod2$.
Since $c_{1}c_{3}\left[M\right]=50+6g_{1}-2g_{2}$ is always even,
the third relation in Proposition \ref{K4} is automatically satisfied
for any complex cobordism class containing a smooth projective toric
variety. Theorem \ref{comb4} can be used to rewrite the remaining
relations of Proposition \ref{K4} as $c_{1}^{2}c_{2}\left[M\right]+2c_{2}^{2}\left[M\right]+c_{1}c_{3}\left[M\right]\equiv0\mod4$.
Combining these simplified relations with the $g$-theorem for simple
polytopes gives a more convenient way of writing the necessary conditions
for a cobordism class in $\Omega_{8}^{U}$ to contain a smooth projective
toric variety.
\begin{thm}
\label{ob4}Let $\left[M\right]\in\Omega_{8}^{U}$. If $\left[M\right]$
does not satisfy all of the following conditions for some $g$-vector
$\left(1,g_{1},g_{2}\right)$, then $\left[M\right]$ does not contain
a smooth projective toric variety.
\begin{align*}
0 & \le g_{1}\\
0 & \le g_{2}\le\frac{1}{2}g_{1}\left(g_{1}+1\right)\\
c_{4}\left[M\right] & =5+3g_{1}+g_{2}\\
c_{1}c_{3}\left[M\right] & =50+6g_{1}-2g_{2}\\
c_{1}^{4}\left[M\right] & =4c_{1}^{2}c_{2}\left[M\right]+3c_{2}^{2}\left[M\right]+3g_{1}-3g_{2}-675\\
c_{1}^{2}c_{2}\left[M\right] & \equiv0\mod2\\
c_{1}^{2}c_{2}\left[M\right]+2c_{2}^{2}\left[M\right]+c_{1}c_{3}\left[M\right] & \equiv0\mod4
\end{align*}

\end{thm}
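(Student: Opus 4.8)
The plan is to assemble Theorem \ref{ob4} from two ingredients that are already in hand: the combinatorial obstructions of Theorem \ref{comb4} and the Hattori--Stong divisibility relations of Proposition \ref{K4}. First I would recall that if $[M]$ contains a smooth projective toric variety $X_P$, then in particular $[M]$ is a genuine complex cobordism class, so all five congruences of Proposition \ref{K4} must hold; and simultaneously the three $g$-vector equations of Theorem \ref{comb4} must hold for the $g$-vector $(1,g_1,g_2)$ of $P$, where $g_1\ge 0$ and $0\le g_2\le \tfrac12 g_1(g_1+1)$ by the $g$-theorem (Theorem \ref{g}, specialized to $n=4$, using $g_1^{\langle 1\rangle}=\binom{g_1+1}{2}$). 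So the content of the theorem is purely that the system \{Prop.~\ref{K4} congruences\} $\cup$ \{Thm.~\ref{comb4} equations\} is \emph{equivalent}, under the equations, to the shorter list displayed in Theorem \ref{ob4}: the first four displayed lines are copied verbatim, and the only real work is showing that the five congruences of Proposition \ref{K4} collapse to the two congruences $c_1^2c_2[M]\equiv 0\bmod 2$ and $c_1^2c_2[M]+2c_2^2[M]+c_1c_3[M]\equiv 0\bmod 4$ once one substitutes $c_4=5+3g_1+g_2$, $c_1c_3=50+6g_1-2g_2$, and $c_1^4=4c_1^2c_2+3c_2^2+3g_1-3g_2-675$.

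The second step is therefore the congruence bookkeeping. I would treat each of the five relations in Proposition \ref{K4} in turn. For the first (the mod-$720$ Todd relation), substitute the three equations: the left-hand side becomes $-(4c_1^2c_2+3c_2^2+3g_1-3g_2-675)+4c_1^2c_2+3c_2^2+(50+6g_1-2g_2)-(5+3g_1+g_2)$, which should simplify to an integer multiple of $720$ (indeed $-(3g_1-3g_2-675)+6g_1-2g_2-3g_1-g_2+50-5 = 720$), so this relation is automatic — consistent with the Todd genus being $1$. For the second relation, $6c_1^2c_2-5c_1c_3+2c_4\equiv 0\bmod 12$: replacing $c_1c_3$ and $c_4$ by their values, the $g_i$ terms are $-5(50+6g_1-2g_2)+2(5+3g_1+g_2) = -240 -24g_1 +12 g_2$, all divisible by $12$, leaving $6c_1^2c_2\equiv 0\bmod 12$, i.e. $c_1^2c_2\equiv 0\bmod 2$. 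For the third, $c_1^2c_2+c_1c_3\equiv 0\bmod 2$: since $c_1c_3=50+6g_1-2g_2$ is even, this is just $c_1^2c_2\equiv 0\bmod 2$ again, hence redundant. For the fourth and fifth relations (mod $12$), substitute all three equations including the one for $c_1^4$; after cancelling the $g_i$ and constant contributions (which one checks are $\equiv 0\bmod 12$, possibly $\bmod 4$), each reduces to a congruence in $c_1^2c_2$ and $c_2^2$, and the claim is that together with $c_1^2c_2\equiv 0\bmod 2$ they are equivalent to $c_1^2c_2+2c_2^2+c_1c_3\equiv 0\bmod 4$ — here one uses $c_1c_3\equiv 50-2g_2\equiv 2+2g_2\bmod 4$ to reabsorb the $g_2$-dependence, or more simply notes $c_1c_3\equiv 2c_2^2+\text{(the linear terms)}$ is not needed and instead keeps $c_1c_3$ symbolic as written. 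I would present this as a short computation, perhaps referencing \cite[Section 4.4]{Wilfong2013} for the fully expanded arithmetic, and state the resulting equivalence.

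The third and final step is to package the equivalence: any $[M]$ containing a smooth projective toric variety satisfies, for the $g$-vector of its polytope, the four equalities and the two residual congruences of Theorem \ref{ob4}, together with $0\le g_1$ and $0\le g_2\le\tfrac12 g_1(g_1+1)$; conversely no new information is lost, so failure of all these conditions for every admissible $(1,g_1,g_2)$ rules out a smooth projective toric representative. I would close by reiterating (as the paper already flags) that the $g$-theorem bound is only known to be necessary for \emph{simple} polytopes, so smoothness might in principle force further restrictions on $(g_1,g_2)$; Theorem \ref{ob4} is stated as a one-directional obstruction precisely for that reason, and so the proof needs only the ``if $[M]$ contains a smooth projective toric variety then \dots'' direction, which is exactly the conjunction of Theorems \ref{comb4} and \ref{g} with Proposition \ref{K4} after the simplification above.

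I expect the main obstacle to be purely the congruence arithmetic in the fourth and fifth relations of Proposition \ref{K4}: one must substitute the cubic-in-nothing but quartic relation $c_1^4=4c_1^2c_2+3c_2^2+3g_1-3g_2-675$ and verify that the $g_1,g_2$, and constant contributions combine to something $\equiv 0$ modulo the relevant power of $2$, so that what survives is genuinely captured by the single mod-$4$ condition $c_1^2c_2+2c_2^2+c_1c_3\equiv 0$ (in tandem with the mod-$2$ condition). This is routine but error-prone, and is the only place where a miscalculation would change the statement; everything else is substitution and bookkeeping.
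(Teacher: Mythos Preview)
Your proposal is correct and follows essentially the same route as the paper: combine the combinatorial obstructions of Theorem~\ref{comb4} (together with the $g$-theorem bound $0\le g_2\le\binom{g_1+1}{2}$) with the Hattori--Stong divisibility relations of Proposition~\ref{K4}, then substitute the three $g$-vector equations into each of the five congruences to see that the first and third become automatic, the second yields $c_1^2c_2\equiv 0\bmod 2$, and the fourth and fifth both collapse to $c_1^2c_2+2c_2^2+c_1c_3\equiv 0\bmod 4$. The paper carries out exactly this reduction in the paragraph preceding the theorem statement, so your plan and the paper's argument coincide.
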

These relations have three different sources. The first two come from
the $g$-theorem. The next three are obstructions arising from the
relation between the $g$-vector and Chern numbers (see \eqref{eq:combob}).
The last two arise from the Hattori-Stong Theorem, and they must be
satisfied for any cobordism class in $\Omega_{8}^{U}$.

As in $\Omega_{6}^{U}$, these conditions on cobordism classes in
$\Omega_{8}^{U}$ are not sufficient conditions for containing smooth
projective toric varieties, and a detailed study must be made to describe
smooth projective toric varieties in a given cobordism class. Unfortunately,
much less is understood about four-dimensional smooth polytopes compared
to smooth polyhedra, so only partial results can be obtained in this
dimension. First, we will focus on smooth 4-polytopes with $g_{1}\le2$,
as these have been completely classified.
\begin{thm}
\label{g1=0}Suppose $\left[M\right]\in\Omega_{8}^{U}$ satisfies
the conditions in Theorem \ref{ob4}, and suppose $g_{1}=0$. Then
$\left[M\right]$ contains a smooth projective toric variety if and
only if $\left[M\right]=\left[\mathbb{C}P^{4}\right]$.\end{thm}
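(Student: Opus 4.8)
The plan is to leverage the assumption $g_1 = 0$ to force the associated polytope to be the 4-simplex, and then verify that the Chern numbers of $\mathbb{C}P^4$ are consistent with Theorem~\ref{ob4}. First I would recall that $g_1 = f_0 - (n+1)$ measures the number of facets in excess of the minimum, so $g_1 = 0$ means the smooth 4-polytope $P$ has exactly five facets. A simple $n$-polytope with $n+1$ facets is necessarily the $n$-simplex $\Delta^n$ (this follows from the classification of simple polytopes with few facets, or directly: the dual is a simplicial polytope with $n+1$ vertices, hence the boundary of $\Delta^n$). Since $\Delta^4$ is automatically smooth, the only smooth projective toric variety with $g_1 = 0$ in complex dimension four is $X_{\Delta^4} = \mathbb{C}P^4$.

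The remaining direction is to check that $[\mathbb{C}P^4]$ actually satisfies the conditions of Theorem~\ref{ob4} with $g_1 = g_2 = 0$ — this is really just a consistency check, since we have already shown $[\mathbb{C}P^4]$ is the unique candidate. Using $c(\mathbb{C}P^4) = (1+x)^5$ with $x$ the hyperplane class and $\langle x^4, \mu\rangle = 1$, one computes the five Chern numbers $c_1^4 = 625$, $c_1^2 c_2 = 250$, $c_2^2 = 100$, $c_1 c_3 = 50$, and $c_4 = 5$. Substituting $g_1 = g_2 = 0$ into the three $g$-vector relations gives $c_4 = 5$, $c_1 c_3 = 50$, and $c_1^4 = 4c_1^2 c_2 + 3 c_2^2 - 675 = 1000 + 300 - 675 = 625$, all of which agree; and $c_1^2 c_2 = 250 \equiv 0 \bmod 2$ together with $c_1^2 c_2 + 2c_2^2 + c_1 c_3 = 250 + 200 + 50 = 500 \equiv 0 \bmod 4$ confirms the Hattori--Stong conditions. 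Hence the hypotheses of the theorem are compatible precisely with $[M] = [\mathbb{C}P^4]$.

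I do not anticipate a serious obstacle here: the whole argument hinges on the elementary combinatorial fact that $g_1 = 0$ characterizes the simplex among simple polytopes, after which uniqueness of the toric variety and a short Chern-number computation finish things. If anything, the only point requiring a word of justification is why a simple 4-polytope with five facets must be $\Delta^4$; I would cite the standard fact (e.g.\ via Gale diagrams, or the low-dimensional cases of the classification of polytopes with few vertices/facets used elsewhere in this section) rather than reprove it. The computation of the Chern numbers of $\mathbb{C}P^4$ is routine and could be compressed to a single displayed line.
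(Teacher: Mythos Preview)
Your proposal is correct and follows essentially the same approach as the paper: the paper's entire proof is the single observation that the only smooth $4$-polytope with $g_{1}=0$ is the $4$-simplex, whose associated toric variety is $\mathbb{C}P^{4}$. Your version simply unpacks this (explaining why $g_{1}=0$ forces five facets and hence the simplex) and appends a Chern-number consistency check for $\mathbb{C}P^{4}$, which the paper omits as unnecessary since $\mathbb{C}P^{4}$ is manifestly a smooth projective toric variety.
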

\begin{proof}
This follows from the fact that the only smooth 4-polytope with $g_{1}=0$
is the 4-simplex, whose associated smooth projective toric variety
is $\mathbb{C}P^{4}$.
\end{proof}
This shows that the conditions of Theorem \ref{ob4} are not sufficient.
For example, let $\left(1,g_{1},g_{2}\right)=\left(1,0,0\right)$
and $\left(c_{1}^{4}\left[M\right],c_{1}^{2}c_{2}\left[M\right],c_{2}^{2}\left[M\right],c_{1}c_{3}\left[M\right],c_{4}\left[M\right]\right)=\left(-672,0,1,50,5\right)$.
This represents a valid cobordism class in $\Omega_{8}^{U}$ that
satisfies all of the conditions of \ref{ob4}. However, $c_{1}^{4}\left[M\right]\ne625=c_{1}^{4}\left[\mathbb{C}P^{4}\right]$,
so $\left[M\right]\ne\left[\mathbb{C}P^{4}\right]$. Thus $\left[M\right]$
does not contain a smooth projective toric variety.

Smooth 4-polytopes with $g_{1}=1$ have also been completely classified.
These are the smooth 4-polytopes having exactly six facets, two more
than the ambient dimension. The corresponding normal fans to these
polytopes were classified by Kleinschmidt \cite{Kleinschmidt1988},
and they are just a generalization of the three-dimensional fans with
five facets shown in Figure \ref{fig:Kleinschmidt}. More specifically,
choose an integer $r\in\left\{ 1,2,3\right\} $ and a set of weakly
increasing integers $0\le a_{1}\le\ldots\le a_{r}$. Define $U=\left\{ u_{1},\ldots,u_{r+1}\right\} $,
where $u_{k}=e_{k}$ is the standard basis vector for $k=1,\ldots,r$,
and set $u_{r+1}=\left(-1,\stackrel{\left(r\right)}{\ldots},-1,0,\ldots,0\right)$.
Also define $V=\left\{ v_{1},\ldots,v_{n-r+1}\right\} $, where $v_{k}=e_{k+r}$
for $k=1,\ldots,n-r$, and set $v_{n-r+1}=\left(a_{1},\ldots,a_{r},-1,\ldots,-1\right)$.
Let $\Sigma_{4}\left(a_{1},\ldots,a_{r}\right)$ denote the fan whose
generating rays are the six rays in $U\cup V$ and whose maximal cones
are obtained by taking the span of all but one vector from $U$ and
all but one vector from $V$. Kleinschmidt proved that the associated
toric varieties $X_{4}\left(a_{1},\ldots,a_{r}\right)$ are smooth
and projective, and he demonstrated that every smooth toric variety
in this dimension whose associated fan has six rays is isomorphic
to one of these varieties \cite{Kleinschmidt1988}. Thus if a smooth
4-polytope satisfies $g_{1}=1$, then its associated smooth projective
toric variety is isomorphic to one of the $X_{4}\left(a_{1},\ldots,a_{r}\right)$.
\begin{thm}
\label{g1=1}Suppose $\left[M\right]\in\Omega_{8}^{U}$ satisfies
the conditions in Theorem \ref{ob4}, and suppose $g_{1}=1$. Then
$\left[M\right]$ contains a smooth projective toric variety if and
only if
\[
\left[M\right]\in\left\{ \left[X_{4}\left(a_{1}\right)\right],\left[X_{4}\left(a_{1},a_{2}\right)\right],\left[X_{4}\left(a_{1},a_{2},a_{3}\right)\right]\right\}
\]
for some integers $0\le a_{1}\le a_{2}\le a_{3}$. \end{thm}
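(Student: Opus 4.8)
The plan is to reduce the whole statement to Kleinschmidt's classification, which the discussion immediately preceding the theorem has already recalled, together with the fact that the $g$-vector of the polytope underlying any toric representative of $[M]$ is pinned down by the Chern numbers of $[M]$. The backward implication is essentially immediate: for integers $0\le a_1\le\ldots\le a_r$ with $r\in\{1,2,3\}$, the variety $X_4(a_1,\ldots,a_r)$ is a smooth projective toric variety whose associated polytope has exactly six facets, hence $g$-vector $(1,1,g_2)$ for the appropriate $g_2$; so $[X_4(a_1,\ldots,a_r)]$ is a cobordism class containing a smooth projective toric variety, and by Theorem \ref{comb4} it automatically satisfies the conditions of Theorem \ref{ob4} with $g_1=1$. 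Thus the content is entirely in the forward implication.

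For the forward implication, suppose $[M]$ satisfies Theorem \ref{ob4} with $g_1=1$ and contains a smooth projective toric variety $X_P$, where $P$ is a smooth $4$-polytope with $g$-vector $(1,g_1',g_2')$. First I would show that $(1,g_1',g_2')=(1,1,g_2)$: by Theorem \ref{combob} (equation \eqref{eq:combob}, in the explicit form recorded in Theorem \ref{comb4}) the Chern numbers of $X_P$ satisfy $c_4[X_P]=5+3g_1'+g_2'$ and $c_1c_3[X_P]=50+6g_1'-2g_2'$, while the hypothesis gives $c_4[M]=5+3g_1+g_2$ and $c_1c_3[M]=50+6g_1-2g_2$ with $g_1=1$. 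Since $c_4[M]=c_4[X_P]$ and $c_1c_3[M]=c_1c_3[X_P]$, and the resulting $2\times 2$ linear system in $(g_1',g_2')$ has nonzero determinant, we get $g_1'=1$ (and $g_2'=g_2$). Hence $P$ has exactly six facets, equivalently its normal fan has six rays. Since $X_P$ is smooth and projective, Kleinschmidt's classification of smooth projective toric varieties whose fans have $n+2$ rays (recalled above) shows that $X_P$ is isomorphic to $X_4(a_1,\ldots,a_r)$ for some $r\in\{1,2,3\}$ and some $0\le a_1\le\ldots\le a_r$, so $[M]=[X_4(a_1,\ldots,a_r)]$ lies in the indicated set.

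I do not expect a genuine obstacle here: the argument is a bookkeeping reduction to a classification theorem already in hand. The one point needing care is the uniqueness of the $g$-vector attached to $[M]$ — one must verify that ``$[M]$ contains a smooth projective toric variety and satisfies Theorem \ref{ob4} with $g_1=1$'' really forces the underlying polytope to have $g_1=1$ rather than some larger value, which is exactly the linear-algebra step above. (It would also be natural, though not needed for this statement, to follow up by computing $c_1^4$, $c_1^2c_2$, and $c_2^2$ of the varieties $X_4(a_1,\ldots,a_r)$ via Theorems \ref{cohom} and \ref{Chern} and Proposition \ref{one}, as was done for the three-dimensional Kleinschmidt varieties in $\Omega_6^U$, in order to identify these cobordism classes concretely.)
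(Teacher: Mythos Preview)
Your proposal is correct and follows essentially the same approach as the paper: the paper does not supply a separate proof environment for this theorem, treating it instead as an immediate consequence of the Kleinschmidt classification recalled in the paragraph just before the statement. Your version is in fact slightly more careful, since you make explicit why any smooth projective toric representative of $[M]$ must have a polytope with $g_1=1$ (via the invertible $2\times 2$ linear system coming from $c_4$ and $c_1c_3$ in Theorem~\ref{comb4}); the paper leaves this step implicit, just as it does in the one-line proof of Theorem~\ref{g1=0}.
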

\begin{rem}
\label{g1=2}Batyrev gave a similar classification for smooth projective
toric varieties whose polytopes have three more facets than the ambient
dimension \cite{Batyrev1991}. In dimension four, a polytope has seven
facets if and only if $g_{1}=2$. This means that a cobordism class
$\left[M\right]\in\Omega_{8}^{U}$ that satisfies the conditions of
Theorem \ref{ob4} with $g_{1}=2$ contains a smooth projective toric
variety if and only if it equals the cobordism class of one of the
varieties classified by Batyrev. Specific conditions on the Chern
numbers can be listed for these varieties by computing the Chern numbers
for each possible variety classified by Batyrev. However, these conditions
are quite complicated, and they do not reveal any surprising structure.
One thing that is clear from these computations is that once again,
the necessary conditions from Theorem \ref{ob4} are not the only
obstructions to a cobordism class containing a smooth projective toric
variety.
\end{rem}
For $g_{1}\in\left\{ 0,1,2\right\} $, the description of which cobordism
classes contain smooth projective toric varieties is quite complicated,
and it depends on much more than the $g$-vector. However, as this
$g$-vector grows larger in some sense, there is more freedom on the
geometry of the corresponding smooth 4-polytopes. In particular, for
a certain infinite set of $g$-vectors in this dimension, the $g$-vector
does provide the only obstructions to a cobordism class containing
a smooth projective toric variety.
\begin{thm}
\label{asymp}Suppose $\left[M\right]\in\Omega_{8}^{U}$ satisfies
the conditions of Theorem \ref{ob4} for some $g$-vector such that
$2\le g_{2}\le g_{1}-1$. Then $\left[M\right]$ is represented by
a smooth projective toric variety. \end{thm}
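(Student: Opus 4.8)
\medskip

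The plan is to realize every admissible combination of Chern numbers by an explicit construction, mimicking the strategy used in $\Omega_6^U$: build a family of ``base'' smooth projective toric 4-folds with the desired $g$-vector $(1,g_1,g_2)$ and with enough free parameters to sweep out the residual Chern numbers $c_1^2c_2[M]$ and $c_2^2[M]$, then use equivariant blow-ups to move between different $g$-vectors. First I would fix a $g$-vector with $2\le g_2\le g_1-1$ and identify a convenient smooth 4-polytope realizing it. A natural choice is an iterated bundle (e.g. a projective bundle over a toric surface, itself a bundle, so that the total space is a ``generalized Bott tower'' of stage $4$): such manifolds are automatically smooth projective toric, their fans are completely explicit, and Theorems \ref{cohom}, \ref{Chern}, \ref{one} reduce all Chern-number computations to monomial bookkeeping in a cohomology ring of the form $\mathbb{Z}[x_1,x_2,x_3,x_4]/(\text{relations})$. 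The twisting integers in the bundle give the free parameters.

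\medskip

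The key steps, in order: (1) For a fixed pair $(g_1,g_2)$ with $2\le g_2\le g_1-1$, construct a family $Y(\underline{a})$ of smooth projective toric 4-folds, depending on integer parameters $\underline{a}$, whose associated polytopes have $g$-vector $(1,g_1,g_2)$ — using Propositions \ref{truncatevertex} and \ref{truncateedge} to see that a base model can be adjusted to hit exactly this $g$-vector (a vertex truncation bumps $g_1$, an edge truncation bumps $g_1$ and $g_2$ together, so one reaches any $(1,g_1,g_2)$ with $g_2\le g_1-1$ starting from, say, $(1,g_2,g_2)$, and $g_2\ge 2$ guarantees we are in a range where enough geometrically distinct starting polytopes exist). (2) Compute $c_1^2c_2[Y(\underline a)]$ and $c_2^2[Y(\underline a)]$ as explicit (quadratic) functions of the parameters $\underline a$, and show that as $\underline a$ ranges over the integers these two Chern numbers realize every pair of values consistent with the congruences $c_1^2c_2\equiv0\bmod 2$ and $c_1^2c_2+2c_2^2+c_1c_3\equiv0\bmod 4$ from Theorem \ref{ob4}. (3) Invoke Theorem \ref{comb4}/\ref{ob4} to conclude that $c_4$, $c_1c_3$, and $c_1^4$ are then forced to the correct values, so the full Chern-number vector agrees with that of the required cobordism class $[M]$. (4) Use Lemma-type blow-up computations (equivariant blow-up at a fixed point, as in Lemma \ref{blowup3}, and along an invariant curve) together with Proposition \ref{Ustinovsky} to pass between neighboring $g$-vectors, filling any gaps left by a single family; since a blow-up at a point changes $(g_1,g_2)\mapsto(g_1+1,g_2)$ and along an edge changes $(g_1,g_2)\mapsto(g_1+1,g_2+1)$, these moves stay inside the region $2\le g_2\le g_1-1$ and interact with the Chern numbers by fixed increments independent of the variety (Proposition \ref{Ustinovsky}), so a single base family plus blow-ups covers the whole region.

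\medskip

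The main obstacle I expect is Step (2): verifying that the two-parameter (or multi-parameter) family of Chern numbers $\bigl(c_1^2c_2,c_2^2\bigr)$ genuinely surjects onto the full lattice of values allowed by the Hattori–Stong congruences, rather than onto a sublattice or a shifted coset. Because $c_2^2$ and $c_1^2c_2$ are \emph{quadratic} in the twisting parameters, one cannot simply appeal to linear algebra; the argument will need a careful choice of which twisting integers to vary and an explicit check that the resulting quadratic forms take all required residues and grow in the right way (for instance, isolating one parameter that enters $c_1^2c_2$ linearly with coefficient $2$ to sweep the mod-$2$ class, and another producing the needed mod-$4$ adjustment in $2c_2^2$). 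A secondary subtlety is ensuring the base family genuinely has $g$-vector $(1,g_1,g_2)$ for the full claimed range $2\le g_2\le g_1-1$ and not merely for $g_2$ near $g_1$; this is where the hypothesis $g_2\ge 2$ (excluding the already-classified small cases) and the combinatorial flexibility of iterated-bundle polytopes, combined with the truncation formulas of Propositions \ref{truncatevertex}–\ref{truncateedge}, must be used carefully. Once these two points are settled, the rest is routine assembly of the cited results.
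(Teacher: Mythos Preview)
Your proposal follows the same overall strategy as the paper: construct an explicit parametrized family of smooth projective toric 4-folds, verify that the two residual Chern numbers $c_1^2c_2$ and $c_2^2$ surject onto the lattice allowed by Theorem~\ref{ob4}, then propagate to all other $g$-vectors in the region via equivariant blow-ups and Proposition~\ref{Ustinovsky}. The paper's execution is tighter than what you outline in two ways. First, it builds the parametrized family at a \emph{single} base $g$-vector, namely $(1,3,2)$, the minimal point of the region $2\le g_2\le g_1-1$; every other admissible $(g_1,g_2)$ is reached from there purely by a fixed sequence of star subdivisions, chosen to avoid the closed star of the ray carrying the parameters, so that the resulting Chern-number shifts are constants independent of the parameters. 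This makes your Step~(1) and Step~(4) a single move rather than two. Second, the paper's base family $Y(a,b)$ is not a Bott tower but a $\mathbb{C}P^1$-bundle (with twist $b$) over a particular twice-blown-up $\mathbb{C}P^3$ (one of the blow-ups carrying the parameter $a$), engineered so that $c_2^2[Y(a,b)]=96-a$ and $c_1^2c_2[Y(a,b)]=188-6a+4b$ are \emph{linear} in $(a,b)$ with $c_2^2$ depending on $a$ alone. Surjectivity in your Step~(2) is then immediate: pick $a$ to hit $c_2^2$, then $b$ to hit $c_1^2c_2$, and the mod-$4$ congruence from Theorem~\ref{ob4} falls out automatically. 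Your anticipated difficulty with quadratic forms is thus an artifact of the Bott-tower ansatz and disappears with a better choice of base family.
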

\begin{proof}
This theorem will be proven using a technique similar to that of part
4 of Theorem \ref{ob3}. First, an explicit construction will be used
to show that the conditions of Theorem \ref{ob4} are sufficient for
the $g$-vector $\left(1,3,2\right)$. Next, a sequence of blow-ups
will be used to obtain smooth projective toric varieties corresponding
to the other possible $g$-vectors.

First, we must describe a collection of smooth projective toric varieties
to account for all possible cobordism classes corresponding to the
$g$-vector $\left(1,3,2\right)$. These can most easily be described
in terms of the corresponding fans. Choose two integers $a$ and $b$,
and consider the fan $\Delta\left(a,b\right)$ whose generating rays
are depicted in Figure \ref{fig:Delta1}. This fan is the join of
two fans. That is, a maximal cone in $\Delta\left(a,b\right)$ is
obtained by taking the combined span of a maximal cone from each of
the smaller fans. To construct the fan on the left, start with the
fan with generating rays $u_{1},\ u_{2},\ u_{3},$ and $u_{4}$ that
corresponds to the toric variety $\mathbb{C}P^{3}$. Take the regular
star subdivision of the cone $\mbox{pos}\left(u_{3},u_{4}\right)$,
and let $x=\left(-1,-1,0,0\right)$ denote the additional vector.
Finally, take the regular star subdivision of $\mbox{pos}\left(u_{1},u_{3},x\right)$,
and let $y=\left(0,-1,1,0\right)$ denote the new generating ray.
The join of this fan and the fan on the right is complete and regular,
so $\Delta\left(a,b\right)$ corresponds to a smooth projective toric
variety $Y\left(a,b\right)$ of complex dimension four.

\begin{figure}
\begin{centering}
\includegraphics[scale=0.65]{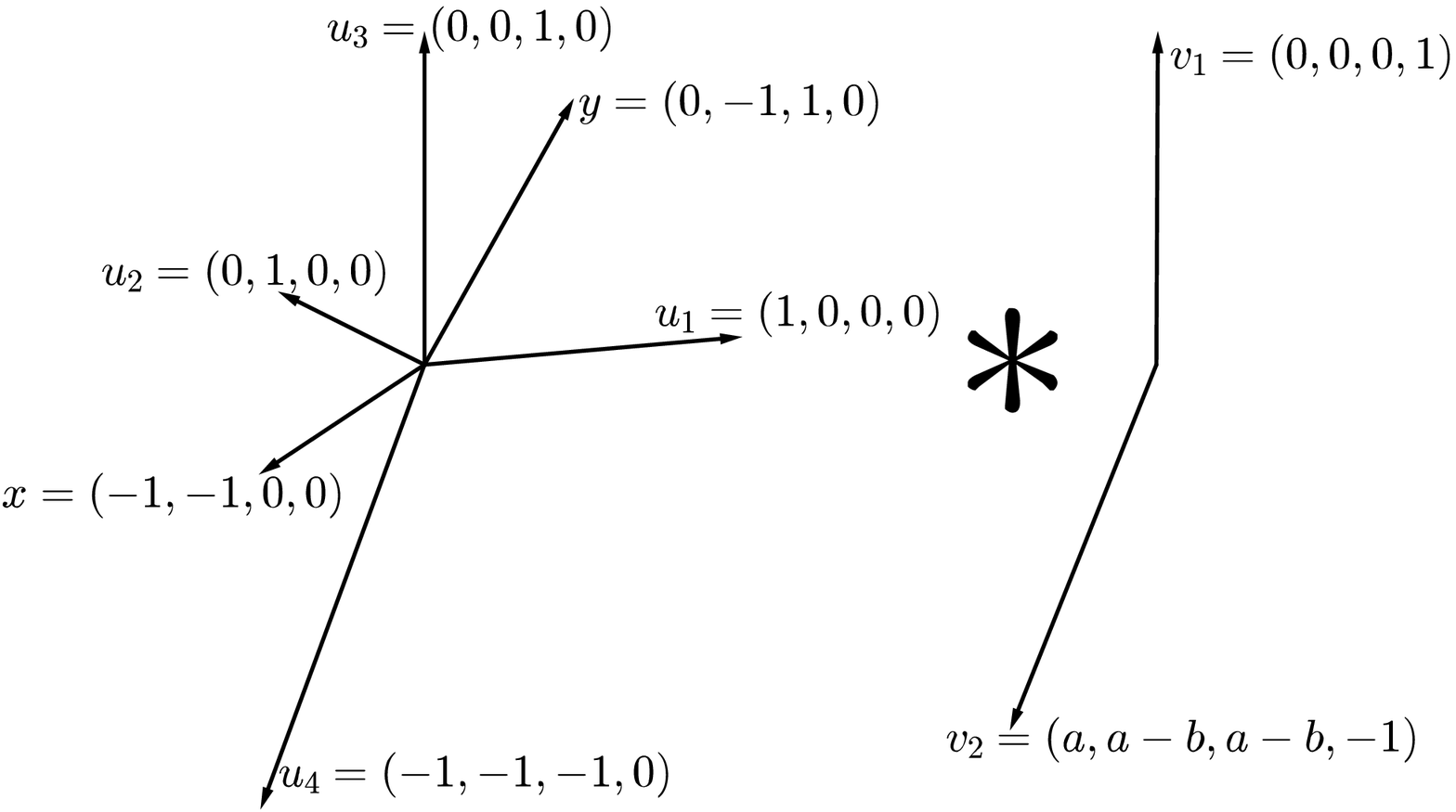}
\par\end{centering}

\caption{The fan $\Delta\left(a,b\right)$\label{fig:Delta1}}
\end{figure}

The simplicial structure of $\Delta\left(a,b\right)$ can be determined
by tracking the maximal cones through each subdivision. In particular,
the Stanley-Reisner ideal for this fan is
\[
\left(u_{2}y,u_{3}u_{4},u_{4}y,v_{1}v_{2},u_{1}u_{2}x,u_{1}u_{3}x\right).
\]
This along with the coordinates of the generating rays can be used
to compute the cohomology of $Y\left(a,b\right)$ using Theorem \ref{cohom}.
Theorem \ref{Chern} can then be used to obtain the values for the
Chern numbers
\[
c_{1}^{2}c_{2}\left[Y\left(a,b\right)\right]=188-6a+4b
\]
and
\[
c_{2}^{2}\left[Y\left(a,b\right)\right]=96-a.
\]

We must show that all possible combinations of Chern numbers are obtained
by the toric varieties $Y\left(a,b\right)$. Since we are requiring
$g=\left(1,3,2\right)$, we have $c_{1}c_{3}\left[M\right]=64$ for
any cobordism class $\left[M\right]\in\Omega_{8}^{U}$ that possibly
contains a smooth projective toric variety, according to Theorem \ref{ob4}.
It therefore suffices to show that every combination of values for
$c_{1}^{2}c_{2}\left[M\right]$ and $c_{2}^{2}\left[M\right]$ such
that $c_{1}^{2}c_{2}\left[M\right]$ is even and $c_{1}^{2}c_{2}\left[M\right]+2c_{2}^{2}\left[M\right]\equiv0\mod4$
are obtained by the varieties $Y\left(a,b\right)$ (see Theorem \ref{ob4}).
Since $c_{2}^{2}\left[Y\left(a,b\right)\right]$ depends on $a$ alone,
we can choose $b$ appropriately to achieve this.

Now that all cobordism classes corresponding to the $g$-vector $\left(1,3,2\right)$
have been represented by smooth projective toric varieties, certain
blow-ups can be used to obtain smooth projective toric varieties for
other $g$-vectors. On the level of fans, consider star subdivisions
of maximal four-dimensional cones and also star divisions of three-dimensional
cones. The change in $g$-vector of the corresponding polytope during
these star divisions is described in Propositions \ref{truncatevertex} and \ref{truncateedge}.
In this dimension, if a polytope has $g$-vector $\left(1,g_{1},g_{2}\right)$,
then taking a star division of a maximal cone of its normal fan produces
a fan that is normal to a polytope with $g$-vector $\left(1,g_{1}+1,g_{2}\right)$.
On the other hand, taking a star division of a three-dimensional cone
changes the $g$-vector to $\left(1,g_{1}+1,g_{2}+1\right)$. This
means that any $g$-vector that satisfies $2\le g_{2}\le g_{1}-1$
can be obtained by a smooth polytope through a sequence of truncations
applied to a smooth polytope with $g$-vector $\left(1,3,2\right)$.

Consider an arbitrary $g$-vector $\left(1,g_{1},g_{2}\right)$ such
that $2\le g_{2}\le g_{1}-1$. Starting with the fans $\Delta\left(a,b\right)$,
fix a sequence of regular star subdivisions that will produce a fan
normal to a polytope with $g$-vector $\left(1,g_{1},g_{2}\right)$.
Choose these star subdivisions so that any cone in the closed star
of the ray $v_{2}$ is never subdivided. By Proposition \ref{Ustinovsky},
the change in cobordism during this sequence of subdivisions does
not depend on the values of $a$ and $b$. In other words, the values
of $c_{1}^{2}c_{2}\left[M\right]$ and $c_{2}^{2}\left[M\right]$
change by the same constants during this sequence of subdivisions
regardless of the values of $a$ and $b$ for the initial toric variety
$Y\left(a,b\right)$. Since all possible cobordism classes are represented
by the smooth projective toric varieties $Y\left(a,b\right)$ for
$g$-vector $\left(1,3,2\right)$, applying this sequence of subdivisions
allows us to obtain a smooth projective toric variety in every cobordism
class with $g$-vector $\left(1,g_{1},g_{2}\right)$.
\end{proof}
The $g$-vectors for which the conditions in Theorem \ref{ob4} are
also sufficient are displayed in Figure \ref{fig:g-theorem}. The
lattice points correspond to all possible $g$-vectors for simple
4-polytopes (using the $g$-theorem), and the shaded area gives the
$g$-vectors described in Theorem \ref{asymp}.

\begin{figure}
\begin{centering}
\includegraphics[scale=0.7]{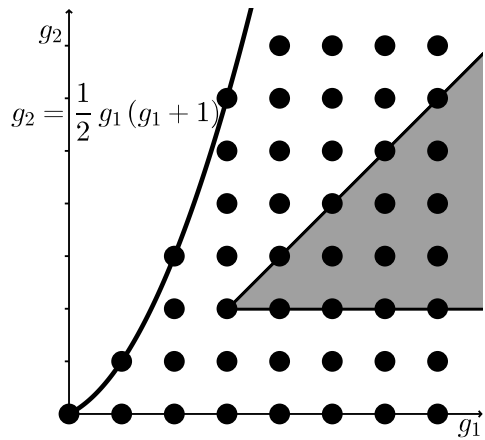}
\par\end{centering}

\caption{The $g$-vectors that satisfy the conditions of Theorem \ref{asymp}\label{fig:g-theorem}}

\end{figure}

There are still many $g$-vectors for which the answer to Question
\ref{main} is open in $\Omega_{8}^{U}$. A more complete classification
of smooth 4-polytopes could help to determine what happens for the
remaining $g$-vectors. Unfortunately, very little is known regarding
this classification. It is not even known which $g$-vectors correspond
to smooth polytopes. The $g$-theorem describes all $g$-vectors of
simple polytopes, and this gives necessary conditions for the $g$-vectors
of smooth polytopes since smooth polytopes are also simple. However,
not every such $g$-vector actually corresponds to a smooth polytope.
For example, consider the smooth 4-polytopes with $g_{1}=2$ (i.e.
with seven facets) that were classified by Batyrev \cite{Batyrev1991}.
It is easy to compute the $g$-vector of each of these smooth polytopes.
The only possible values for $g_{2}$ are $0,\ 1,\ $and $2$. This
means that although there is a simple 4-polytope with $g$-vector
$\left(1,2,3\right)$, there is no smooth 4-polytope with this $g$-vector.
\begin{prop}
\label{(1,2,3)}Suppose $\left[M\right]\in\Omega_{8}^{U}$ satisfies
the conditions of Theorem \ref{ob4} with $g$-vector $\left(1,2,3\right)$.
Then there is no smooth projective toric variety that can be chosen
to represent $\left[M\right]$.
\end{prop}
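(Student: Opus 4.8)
The plan is to argue by contradiction, reducing to Batyrev's classification of Picard-rank-three smooth toric varieties. Suppose some smooth projective toric variety $X_P$ represents $[M]$, where $P$ is the associated smooth $4$-polytope, with $g$-vector $g(P)=\left(1,g_1,g_2\right)$. Since $X_P$ represents $[M]$, the relations of Theorem~\ref{ob4} hold with this $g$-vector, so in particular $c_4[M]=5+3g_1+g_2$ and $c_1c_3[M]=50+6g_1-2g_2$. This linear system determines $\left(g_1,g_2\right)$ uniquely from $c_4[M]$ and $c_1c_3[M]$ (for instance $2c_4[M]+c_1c_3[M]=60+12g_1$, after which $g_2$ is forced). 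Because $[M]$ is assumed to satisfy the conditions of Theorem~\ref{ob4} with the $g$-vector $\left(1,2,3\right)$, uniqueness gives $g(P)=\left(1,2,3\right)$.

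Next I would translate this into pure combinatorics. From $g_1=2$ we get that $P$ has $n+g_1+1=7$ facets, equivalently $X_P$ has Picard rank $7-4=3$; so $P$ is a smooth $4$-polytope with exactly three more facets than its dimension. Batyrev \cite{Batyrev1991} classified all smooth complete (hence projective, in this range) toric varieties of Picard rank three, and in dimension four there are only finitely many combinatorial types of the corresponding polytopes. Since the $g$-vector of a simple polytope is a function of its $f$-vector, and hence depends only on the combinatorial type, it is enough to inspect this finite list.

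The remaining, and only substantive, step is to compute the $g$-vector of each combinatorial type of smooth $4$-polytope with seven facets occurring in Batyrev's classification. In every case one finds $g_2\in\{0,1,2\}$; the value $g_2=3$ never appears, which is precisely the observation recorded in the discussion immediately preceding this proposition. Hence no smooth $4$-polytope has $g$-vector $\left(1,2,3\right)$, the hypothetical $X_P$ cannot exist, and $[M]$ is represented by no smooth projective toric variety. The point demanding care is to invoke Batyrev's result in the form that genuinely enumerates all combinatorial types of Picard-rank-three smooth complete toric fourfolds, so that the verification $g_2\le 2$ is exhaustive rather than a check of a distinguished subfamily; granting this, the computation is routine.
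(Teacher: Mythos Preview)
Your argument is correct and follows the same route as the paper: invoke Batyrev's classification of smooth projective toric fourfolds with seven facets and observe that none of the resulting polytopes has $g_{2}=3$, so no smooth $4$-polytope with $g$-vector $(1,2,3)$ exists. The paper states this conclusion in the paragraph immediately preceding the proposition and leaves the proposition itself without a separate proof. Your write-up is actually a bit more careful than the paper's, since you make explicit that the relations $c_{4}[M]=5+3g_{1}+g_{2}$ and $c_{1}c_{3}[M]=50+6g_{1}-2g_{2}$ determine $(g_{1},g_{2})$ uniquely from the Chern numbers, ruling out the possibility that $[M]$ is represented by a smooth projective toric variety whose polytope has some \emph{other} $g$-vector; the paper leaves this step implicit in its framework.
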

A refinement of the $g$-theorem for smooth 4-polytopes could improve
Theorem \ref{ob4} by providing more stringent necessity conditions.

\section{Closing Remarks}

It seems reasonable to expect that Hirzebruch's Question regarding
which complex cobordism classes contain connected smooth algebraic
varieties would be greatly simplified by only considering smooth projective
toric varieties. Even when this is done, the results are quite complicated.
A complete answer to Question \ref{main} has only been given up to
complex dimension three. The same techniques that produce partial
results in complex dimension four quickly become too cumbersome as
dimension increases further. In an arbitrary dimension $n$, the $g$-theorem
for simple polytopes and the combinatorial obstructions of Theorem
\ref{combob} provide obstructions to $\left\lfloor \frac{n+2}{2}\right\rfloor $-many
of the Chern numbers of cobordism classes containing smooth projective
toric varieties. Unfortunately, the total number $\left|\pi\left(n\right)\right|$
of Chern numbers needed to describe complex cobordism classes in $\Omega_{2n}^{U}$
increases much more quickly than this number of obstructions. As dimension
increases, the combinatorial structure of smooth projective toric
varieties seems to have an ever diminishing influence on complex cobordism.

It is also impractical to explicitly describe all possible combinations
of Chern numbers in higher dimensions using the Hattori-Stong Theorem
as it was done up to complex dimension four. Perhaps studying $g$-vectors
and Chern numbers may not be the best way to approach Question \ref{main}.
It may be worthwhile to search for other invariants of polytopes or
cobordism classes that give a more complete answer.

There are also several ways in which the partial results for Question
\ref{main} could possibly be refined or extended. In both complex
dimensions three and four, there is a collection of $g$-vectors for
which the only obstructions to a cobordism class containing a smooth
projective toric variety are the combinatorial obstructions from Theorem
\ref{combob}. These $g$-vectors allow enough geometric freedom to
construct a large collection of smooth polytopes which correspond
to smooth projective toric varieties in any potential cobordism class.
For higher dimensions, it would be interesting to see if a similar
asymptotic result holds. Does the amount of geometric freedom for
certain $g$-vectors increase quickly enough compared to the rapid
increase in how many Chern numbers are independent of the $g$-vector?
\begin{question}
Given an arbitrary complex dimension $n$, is there a collection of
$g$-vectors such that the only obstructions to a cobordism class
$\left[M\right]\in\Omega_{2n}^{U}$ containing a smooth projective
toric variety are the combinatorial obstructions given in Theorem
\ref{combob}?
\end{question}
Since there is a bijective correspondence between smooth projective
toric varieties and smooth polytopes, another way of approaching Question
\ref{main} is to determine a more complete classification of smooth
polytopes. For example, all smooth $n$-dimensional polytopes with
$g_{1}\in\left\{ 0,1,2\right\} $ have been completely classified.
The only polytope with $g_{1}=0$ is the $n$-simplex, and those with
$g_{1}=1$ and $g_{1}=2$ were classified by Kleinschmidt \cite{Kleinschmidt1988}
and Batyrev \cite{Batyrev1991}, respectively. Thus it is known which
cobordism classes that satisfy Theorem \ref{combob} with $g_{1}\in\left\{ 0,1,2\right\} $
contain smooth projective toric varieties. A more complete classification
of smooth polytopes could lead to more results of this type.

As a start to this classification, it would be useful to know all
$g$-vectors that correspond to smooth polytopes. Since all smooth
polytopes are simple, the restrictions in the $g$-theorem are necessary
but not sufficient.
\begin{question}
Which vectors can be obtained as $g$-vectors for smooth polytopes?
\end{question}
Answering this question would immediately allow us to improve Theorems
\ref{combob} and \ref{obn} with more stringent conditions.

\bibliographystyle{plain}
\bibliography{E:/Bibliography}

\begin{thebibliography}{10}

\bibitem{Atiyah1961}
M.~F. Atiyah.
\newblock Immersions and embeddings of manifolds.
\newblock {\em Topology}, 1:125--132, 1961.

\bibitem{Batyrev1991}
Victor~V. Batyrev.
\newblock On the classification of smooth projective toric varieties.
\newblock {\em Tohoku Mathematical Journal}, 43:569--585, 1991.

\bibitem{Billera1980}
Louis~J. Billera and Carl~W. Lee.
\newblock Sufficiency of {M}c{M}ullen's conditions for f-vectors of simplicial
  polytopes.
\newblock {\em Bulletin of the American Mathematical Society}, 2(1):181--185,
  1980.

\bibitem{Buchstaber1998}
V.~M. Buchstaber and N.~Ray.
\newblock Toric manifolds and complex cobordisms.
\newblock {\em Russian Mathematical Surveys}, 53(2):371--373, 1998.

\bibitem{Buchstaber2002}
Victor~M. Buchstaber and Taras~E. Panov.
\newblock {\em Torus Actions and Their Applications in Topology and
  Combinatorics}, volume~24 of {\em University Lecture Series}.
\newblock American Mathematical Society, Providence, RI, 2002.

\bibitem{Buchstaber2001}
Victor~M Buchstaber and Nigel Ray.
\newblock Tangential structures on toric manifolds, and connected sums of
  polytopes.
\newblock {\em International Mathematical Research Notices}, 4:193--219, 2001.

\bibitem{Conner1966}
P.~E. Conner and E.~E. Floyd.
\newblock {\em The Relation of Cobordism to K-Theories}.
\newblock Number~28 in Lecture Notes in Mathematics. Springer-Verlag, 1966.

\bibitem{Cox2011}
David~A. Cox, John~B. Little, and Henry~K. Schenck.
\newblock {\em Toric Varieties}, volume 124 of {\em Graduate Studies in
  Mathematics}.
\newblock American Mathematical Society, 2011.

\bibitem{Danilov1978}
V.~I. Danilov.
\newblock The geometry of toric varieties.
\newblock {\em Russian Mathematical Surveys}, 33(2):97--154, 1978.

\bibitem{Ewald1996}
G\"unter Ewald.
\newblock {\em Combinatorial Convexity and Algebraic Geometry}.
\newblock Springer, 1996.

\bibitem{Fulton1993}
William Fulton.
\newblock {\em Introduction to Toric Varieties}.
\newblock Princeton University Press, 1993.

\bibitem{Griffiths1978}
Phillip Griffiths and Joseph Harris.
\newblock {\em Principles of Algebraic Geometry}.
\newblock Wiley, New York, 1978.

\bibitem{Hattori1966}
A.~Hattori.
\newblock Integral characteristic numbers for weakly almost complex manifolds.
\newblock {\em Topology}, 5:259--280, 1966.

\bibitem{Hattori2003}
Akio Hattori and Mikiya Masuda.
\newblock Theory of multi-fans.
\newblock {\em Osaka Journal of Mathematics}, 40(1):1--68, 2003.

\bibitem{Hirzebruch1958}
F.~Hirzebruch.
\newblock Komplexe {M}annigfaltigkeiten.
\newblock In {\em Proceedings of the International Congress of Mathematicians},
  pages 119--136, 1958.

\bibitem{Hirzebruch1966}
F.~Hirzebruch.
\newblock {\em Topological Methods in Algebraic Geometry}.
\newblock Springer-Verlag, third edition, 1966.

\bibitem{Ishida1990}
Masa-Nori Ishida.
\newblock Polyhedral {L}aurent series and {B}rion's equalities.
\newblock {\em International Journal of Mathematics}, 1(3):251--265, 1990.

\bibitem{Jurkiewicz1985}
J.~Jurkiewicz.
\newblock Torus embeddings, polyhedra, k*-actions and homology.
\newblock {\em Dissertationes Mathematicae}, 236, 1985.

\bibitem{Kleinschmidt1988}
Peter Kleinschmidt.
\newblock A classification of toric varieties with few generators.
\newblock {\em Aequationes Mathematicae}, 35:254--266, 1988.

\bibitem{Libgober1990}
Anatoly~S. Libgober and John~W. Wood.
\newblock Uniqueness of the complex structure on {K}ähler manifolds of certain
  homotopy types.
\newblock {\em Journal of Differential Geometry}, 32:139--154, 1990.

\bibitem{Mayer1969}
K.~H. Mayer.
\newblock {\em Relationen zwischen charakteristischen Zahlen}.
\newblock Number 111 in Lecture Notes in Mathematics. Springer Verlag, 1969.

\bibitem{Milnor1960}
J.~Milnor.
\newblock On the cobordism ring ${\Omega}^*$ and a complex analogue.
\newblock {\em American Journal of Mathematics}, 82(3):505--521, July 1960.

\bibitem{Milnor1974}
John~W. Milnor and James~D. Stasheff.
\newblock {\em Characteristic Classes}.
\newblock Number~76 in Annals of Mathematical Studies. Princeton University
  Press, 1974.

\bibitem{Novikov1960}
S.~P. Novikov.
\newblock Some problems in the topology of manifolds connected with the theory
  of {T}hom spaces.
\newblock {\em Doklady Akademii Nauk SSSR}, 132(5):1031--1034, 1960.

\bibitem{Oda1988}
Tadao Oda.
\newblock {\em Convex Bodies and Algebraic Geometry}.
\newblock Springer-Verlag, 1988.

\bibitem{Stanley1980}
Richard~P. Stanley.
\newblock The number of faces of a simplicial convex polytope.
\newblock {\em Advances in Mathematics}, 35:236--238, 1980.

\bibitem{Stong1965}
Robert~E. Stong.
\newblock Relations among characteristic numbers-i.
\newblock {\em Topology}, 4:267--281, 1965.

\bibitem{Stong1968}
Robert~E. Stong.
\newblock {\em Notes on Cobordism Theory}.
\newblock Princeton University Press, 1968.

\bibitem{Ustinovsky2011}
Yury Ustinovsky.
\newblock Characteristic numbers of toric varieties from combinatorial point of
  view.
\newblock Toric topology meeting, Osaka, Japan, November 2011.

\bibitem{Wilfong2013}
Andrew Wilfong.
\newblock {\em Toric Varieties and Cobordism}.
\newblock PhD thesis, University of Kentucky, 2013.

\end{thebibliography}

\end{document}